\documentclass[secthm,seceqn,amsthm,ussrhead,10pt]{amsart}
\usepackage[utf8]{inputenc}
\usepackage[english]{babel}
\usepackage{amssymb,amsmath,amsthm,amsfonts,xcolor,enumerate,hyperref,comment,longtable,cleveref}

\usepackage{times}
\usepackage{cite}
\usepackage{pdflscape}
\usepackage{ulem}
\usepackage[mathcal]{euscript}
\usepackage{tikz}
\usepackage{hyperref}
\usepackage{cancel}
\usepackage{stmaryrd}

\usetikzlibrary{arrows}

\newcommand{\Co}{{\mathbb C}}

\setlength{\topmargin}{-15mm}
\setlength{\textwidth}{235mm}  %!!
\setlength{\textheight}{250mm}
\setlength{\evensidemargin}{20mm}
\setlength{\oddsidemargin}{20mm}

\mathsurround=0pt  \tolerance=1500 \textwidth=18true cm %!!
\hoffset=-25mm %!!

\sloppy
\newtheorem{theorem}{Theorem}
\newtheorem{lemma}[theorem]{Lemma}

\newtheorem{remark}[theorem]{Remark}

\newtheorem{definition}[theorem]{Definition}

\crefrangeformat{equation}{#3(#1)#4--#5(#2)#6}

\crefname{theorem}{Theorem}{Theorems}
\crefname{lemma}{Lemma}{Lemmas}
\crefname{corollary}{Corollary}{Corollaries}
\crefname{proposition}{Proposition}{Propositions}
\crefname{definition}{Definition}{Definitions}
\crefname{example}{Example}{Examples}
\crefname{remark}{Remark}{Remarks}
\crefname{section}{Section}{Sections}
\crefname{equation}{\unskip}{\unskip}
\crefname{enumi}{\unskip}{\unskip}

\newenvironment{Proof}[1][Proof.]{\begin{trivlist}
\item[\hskip \labelsep {\bfseries #1}]}{\flushright
$\Box$\end{trivlist}}

\usepackage{stmaryrd}
\usepackage{xcolor}
\newcommand{\red}{\color{black}}

\newcommand{\aut}[1]{\operatorname{\mathrm{Aut}}{#1}}

\newcommand{\A}{\mathbf{A}}
\newcommand{\T}[2]{\mathbf{T}^{#1}_{#2}}
\newcommand{\D}[2]{\mathbf{D}^{#1}_{#2}}
\newcommand{\Nl}{\mathfrak{N}}

\newcommand{\Dt}[2]{\Delta_{#1#2}}
\newcommand{\Dl}[2]{[\Delta_{#1#2}]}
\newcommand{\nb}[1]{\nabla_{#1}}
\newcommand{\orb}{\mathrm{Orb}}

\newcommand{\der}{\mathfrak{Der}}

\newcommand{\as}[1]{\alpha^\star_{#1}}

\newcommand{\la}{\langle}
\newcommand{\ra}{\rangle}
\newcommand{\La}{\Big\langle}
\newcommand{\Ra}{\Big\rangle}

\newcommand{\lb}{\lambda}
\newcommand{\0}{\theta}
\newcommand{\af}{\alpha}
\newcommand{\bt}{\beta}
\newcommand{\gm}{\gamma}
\begin{document}
\noindent{\Large 
The algebraic and geometric classification of nilpotent terminal algebras}
\footnote{The first part of this work is  supported by  FAPESP 16/16445-0, 18/15712-0, 18/09299-2;
CNPq 451499/2018-2, 404649/2018-1;
the President's "Program Support of Young Russian Scientists" (grant MK-2262.2019.1);
    by CMUP (UID/MAT/00144/2019), which is funded by FCT with national (MCTES) and European structural funds through the programs FEDER, under the partnership agreement PT2020;
    by the Funda\c{c}\~ao para a Ci\^encia e a Tecnologia (Portuguese Foundation for Science and Technology) through the project PTDC/MAT-PUR/31174/2017.
    The  second part of this work is supported by the Russian Science Foundation under grant 19-71-10016.
    The authors thank  Abror Khudoyberdiyev    for   constructive comments.
}$^,$\footnote{Corresponding Author: Ivan Kaygorodov - kaygorodov.ivan@gmail.com  }

\

   {\bf   Ivan Kaygorodov$^{a},$  Mykola Khrypchenko$^{b,c}$ \& Yury Popov$^{d,e}$ \\

    \medskip
}

{\tiny

$^{a}$ CMCC, Universidade Federal do ABC. Santo Andr\'e, Brazil

$^{b}$ Departamento de Matem\'atica, Universidade Federal de Santa Catarina, Florian\'opolis, Brazil

$^{c}$ Centro de Matem\'atica e Aplica\c{c}\~oes, Faculdade de Ci\^{e}ncias e Tecnologia, Universidade Nova de Lisboa, Caparica, Portugal

$^{d}$ IMECC, Universidade Estadual de Campinas, Campinas, Brazil

$^{e}$ Saint Petersburg State University, Russia

\

\smallskip

   E-mail addresses:

\smallskip

    Ivan Kaygorodov (kaygorodov.ivan@gmail.com) 
    
    Mykola Khrypchenko (nskhripchenko@gmail.com)    

    Yury Popov (yuri.ppv@gmail.com)

}

\ 

\

\ 

\noindent {\bf Abstract.}
 We give algebraic and geometric classifications of $4$-dimensional complex nilpotent terminal  algebras. 
 Specifically, we find that, up to isomorphism, 
 there are 
 $41$ one-parameter families of $4$-dimensional nilpotent terminal (non-Leibniz) algebras, 
 $18$ two-parameter families of $4$-dimensional nilpotent terminal (non-Leibniz) algebras, 
  $2$ three-parameter families of $4$-dimensional nilpotent terminal (non-Leibniz) algebras,
 complemented by $21$ additional isomorphism classes (see Theorem~\ref{main-alg}). 
 The corresponding geometric variety has dimension 17 and decomposes into 3 irreducible components determined by the Zariski closures of a one-parameter family of algebras, a two-parameter family of algebras and a three-parameter family of algebras (see Theorem~\ref{main-geo}). 
 In particular, there are no rigid $4$-dimensional complex nilpotent terminal  algebras.

\

\noindent {\bf Keywords}: {\it Nilpotent algebra, terminal algebra, Leibniz  algebra, algebraic classification,  central extension, geometric classification, degeneration.}

\ 

\noindent {\bf MSC2010}: 17A30, 17A32,   14L30.

\ 

\section*{Introduction}

Algebraic classification (up to isomorphism) of algebras of small dimension from a certain variety defined by a family of polynomial identities is a classic problem in the theory of non-associative algebras. There are many results related to algebraic classification of small dimensional algebras in varieties of Jordan, Lie, Leibniz, Zinbiel and other algebras.
Another interesting approach of studying algebras of a fixed dimension is to study them from a geometric point of view (that is, to study degenerations and deformations of these algebras). The results in which the complete information about degenerations of a certain variety is obtained are generally referred to as the geometric classification of the algebras of these variety. There are many results related to geometric classification of Jordan, Lie, Leibniz, Zinbiel and other algebras \cite{ack, kv17,ikv17, kv16}.

In 1972, Kantor introduced the notion a conservative algebra as a generalization of Jordan algebras \cite{Kantor72}. Unlike other classes of non-associative algebras, this class is not defined by a set of identities. To introduce the notion of a conservative algebra, we need some notation. 
 Let $\mathbb V$ be a vector space, let $A$ be a linear operator on $\mathbb V,$ and let $B$ and $C$ be bilinear operators on $\mathbb V.$ For all $x,y,z \in \mathbb V,$ put
\begin{gather*}
[B,x](y)=B(x,y),\\
[A,B](x,y)= A(B(x,y))- B(A(x),y)-B(x,A(y)),\\
[B,C](x,y,z)=B(C(x,y),z) + B(x,C(y,z))+ B(y,C(x,z))\\
-C(B(x,y),z)- C(x,B(y,z))-C(y,B(x,z)).
\end{gather*}
 
Consider an algebra as a vector space $\mathbb V$ over a field $\mathbb{C}$, together with an
 element $\mu$ of $\operatorname{Hom}(\mathbb V \otimes \mathbb V,  \mathbb V),$ so that $a \cdot b =\mu(a \otimes b).$ 
For an algebra $(\mathbb V, \mathcal P)$ with a multiplication $\mathcal P$ and $x\in \mathbb V$ we denote by $L_x^{\mathcal P}$ the operator of left multiplication by $x.$ %If the multiplication $P$ is fixed, we write $L_x$ instead of $L_x^P.$
Thus, Kantor defines conservative algebras as follows:

\begin{definition}
An algebra $(\mathbb V, \mathcal P),$ where $\mathbb V$ is the vector space and $\mathcal P$ is the multiplication, is called a conservative algebra if there is a new multiplication $\mathcal P^*: \mathbb V\times \mathbb V\rightarrow \mathbb V$ such that 
\begin{equation}\label{uno}
[L_b^{\mathcal P},[L_a^{\mathcal P}, {\mathcal P}]]=-[L_{{\mathcal P^*}(a,b)}^{\mathcal P},{\mathcal P}] 
\textrm{, for all $a, b \in \mathbb V.$}
\end{equation}
Simple calculations take us to the following identity with an additional multiplication $\mathcal P^*,$ which must  hold for all $a, b, x, y\in \mathbb V$:
\begin{equation}\label{dos}
\begin{split}
b(a(xy)-(ax)y-x(ay)) - a((bx)y) + (a(bx))y+(bx)(ay) -a(x(by))+(ax)(by)+x(a(by))= \\
= - \mathcal P^*(a,b)(xy)+(\mathcal P^*(a,b)x)y + x(\mathcal P^*(a,b)y).
\end{split}
\end{equation}
\end{definition}

The class of conservative algebras is very vast. It includes
 all associative algebras, 
 all quasi-associative algebras,
 all Jordan algebras, 
 all Lie algebras, 
 all (left) Leibniz algebras,
 all (left) Zinbiel algebras, 
 and many other classes of algebras.
On the other side, all conservative algebras are "rigid" (by Cantarini and Kac) algebras  \cite{kacan}.

However, this class is very hard to study and for now even the basic general questions about it remain unanswered, so it is a good idea to study its subclasses which are sufficiently wide but easier to deal with. In \cite{Kantor89} Kantor, studying the generalized TKK functor, introduced the class of terminal algebras which is a subclass of the class of conservative algebras. 

\begin{definition}
An algebra $(\mathbb V, \mathcal P),$ where $\mathbb V$ is a vector space and $\mathcal P$ is a multiplication, is called a terminal algebra if for all $a\in \mathbb V$ we have
\begin{equation}\label{tress}
[[[{\mathcal P},a],{\mathcal P}],{\mathcal P}]=0.
\end{equation}

\end{definition}

Note that we can expand the relation (\ref{tress}), obtaining an identity of degree 4. Therefore, the class of terminal algebras is a variety.
Also, about terminal and conservative algebras see, for example \cite{Kantor90, KPP18, cfk19}.
 
The following remark is obtained by straightforward calculations.

\begin{remark}
A commutative algebra satisfying (\ref{tress}) is a Jordan algebra. 
\end{remark}

Aside from Jordan algebras, the class of terminal algebras includes all Lie algebras, all (left) Leibniz algebras and some other types of algebras.

The following characterization of terminal algebras, proved by Kantor \cite[Theorem 2]{Kantor89}, provides a description of this class as a subclass of the class of conservative algebras.

\begin{remark}
An algebra $(\mathbb{V}, {\bf P})$ is terminal if and only if it is conservative
 and the multiplication in the associated superalgebra ${\bf P}^*$ can be defined by
 \begin{equation}
\label{terminal_associated} 
     {\bf P}^*(x,y)=\frac{2}{3}{\bf P}(x,y)+\frac{1}{3}{\bf P}(y,x).
 \end{equation}
\end{remark}

Our method of classification of nilpotent terminal algebras is based on the calculation of central extensions of smaller nilpotent algebras from the same variety.  The algebraic study of central extensions of Lie and non-Lie algebras has a very big story \cite{omirov,hac16,ss78}.
Skjelbred and Sund used central extensions of Lie algebras for a classification of nilpotent Lie algebras  \cite{ss78}. After that, using the method described by Skjelbred and Sund were described  all non-Lie central extensions of  all $4$-dimensional Malcev algebras \cite{hac16}, all non-associative central extensions of $3$-dimensional Jordan algebras, all anticommutative central extensions of $3$-dimensional anticommutative algebras,
all central extensions of $2$-dimensional algebras \cite{cfk18}.

The algebraic classification of nilpotent algebras will be achieved by the calculation of central extensions of algebras from the same variety which have a smaller dimension.
Central extensions of algebras from various varieties were studied, for example, in \cite{ss78,omirov}.
Skjelbred and Sund \cite{ss78} used central extensions of Lie algebras to classify nilpotent Lie algebras.
Using the same method,  
all non-Lie central extensions of  all $4$-dimensional Malcev algebras \cite{hac16},
all non-associative central extensions of all $3$-dimensional Jordan algebras,
all anticommutative central extensions of $3$-dimensional anticommutative algebras,
all central extensions of $2$-dimensional algebras \cite{cfk18}
and some others were described.
One can also look at the classification of
$3$-dimensional nilpotent algebras \cite{fkkv},
$4$-dimensional nilpotent associative algebras \cite{degr1},
$4$-dimensional nilpotent Novikov algebras,
$4$-dimensional nilpotent bicommutative algebras,
$4$-dimensional nilpotent commutative algebras in \cite{fkkv},
$5$-dimensional nilpotent restricted Lie agebras \cite{usefi1},
$5$-dimensional nilpotent Jordan algebras,
$5$-dimensional nilpotent anticommutative algebras \cite{fkkv},
$6$-dimensional nilpotent Lie algebras \cite{degr3, degr2},
$6$-dimensional nilpotent Malcev algebras \cite{hac18},
$6$-dimensional nilpotent Tortkara algebras,
$6$-dimensional nilpotent binary Lie algebras \cite{ack}.

In the present paper, we classify nilpotent terminal algebras of dimension less than or equal to 4, and obtain the complete description of degenerations of these algebras.

\section{The algebraic classification of nilpotent terminal algebras}
\subsection{Method of classification of nilpotent algebras}
Throughout this paper, we  use the notation and methods described in \cite{hac16,cfk18}
and adapted for the terminal case with some modifications (see also \cite{Jac} for a discussion of extensions of algebras in an arbitrary nonassociative variety). Therefore, all statements in this subsection are given without proofs, which can be found in the papers cited above.

Let ${\bf A}$ be a terminal algebra over $\mathbb C$ and $\mathbb V$ a vector space of dimension $s$ over the same base field. Then the space ${\rm Z_T^2}(\bf A,\mathbb V )$ is defined as the set of all  maps $\theta :{\bf A} \times {\bf A} \to {\mathbb V}$ such that 
\[\theta(b,a(xy) - (ax)y - x(ay)) - \theta(a,(bx)y) + \theta(a(bx),y) + \theta(bx,ay)\]
\[- \theta(a,x(by)) + \theta(ax,(by)) + \theta(x,(ab)y) = -\theta({\bf P}^*(a,b),xy) + \theta({\bf P}^*(a,b)x,y) + \theta(x,{\bf P}^*(a,b)y),\]
where ${\bf P}^*$ is given by (\ref{terminal_associated}). Its elements will be called \textit{cocycles}. For a linear map $f: \bf A \to \mathbb V$ define $\delta f: {\bf A} \times
{\bf A} \to {\mathbb V}$ by $\delta f (x,y ) =f(xy).$ One can check that $\delta f\in {\rm Z_T^2}({\bf A},{\mathbb V} ).$ Therefore, ${\rm B^2}({\bf A},{\mathbb V}) =\left\{ \theta =\delta f\ :f\in \operatorname{Hom}({\bf A},{\mathbb V}) \right\}$ is a subspace of ${\rm Z_T^2}({\bf A},{\mathbb V})$ whose elements are called \textit{coboundaries}. We define the \textit{second cohomology space} ${\rm H_T^2}({\bf A},{\mathbb V})$ as the quotient space ${\rm Z_T^2}({\bf A},{\mathbb V}) \big/{\rm B^2}({\bf A},{\mathbb V}).$
The equivalence class of $\theta \in {\rm Z^2}({\bf A},{\mathbb V})$  in ${\rm H^2}({\bf A},{\mathbb V})$ will be denoted by $[\theta].$

\bigskip

For a bilinear map $\theta :{\bf A} \times {\bf A} \to {\mathbb V}$ define on the linear space ${\bf A}_{\theta }:={\bf A}\oplus {\mathbb V}$ a bilinear product $[-,-]_{{\theta}}$ by 
\[[x+x^{\prime },y+y^{\prime }]_{{\theta}}= xy +\theta (x,y) \mbox{ for all } x,y\in {\bf A},x^{\prime },y^{\prime }\in {\mathbb V}.\]
Then ${\bf A}_{\theta }$ is an algebra called an $s${\it{-dimensional central extension}} of ${\bf A}$ by ${\mathbb V}.$ The following statement can be verified directly:

\begin{lemma}
The algebra ${\bf A_{\theta}}$ is terminal \textit{if and only if} $\theta \in {\rm Z_T^2}({\bf A}, {\mathbb V}).$
\end{lemma}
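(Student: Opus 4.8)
The statement will follow once we recognize that the defining relation of ${\rm Z_T^2}({\bf A},\mathbb V)$ is nothing but the terminal identity for the extension, read off on the $\mathbb V$-component. The plan is to use the description of the terminal identity in the form of the conservative identity (\ref{dos}) with the specific multiplication ${\bf P}^*(a,b)=\tfrac23 ab+\tfrac13 ba$ coming from (\ref{terminal_associated}) — this is exactly the ${\bf P}^*$ used in the definition of ${\rm Z_T^2}({\bf A},\mathbb V)$. Thus it suffices to show that ${\bf A}_\theta$, equipped with its own induced multiplication, satisfies (\ref{dos}) if and only if $\theta$ satisfies the cocycle relation.

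First I would record the two structural features of ${\bf A}_\theta={\bf A}\oplus\mathbb V$ that drive the computation. Writing each element as $a=a_0+a_1$ with $a_0\in{\bf A}$ and $a_1\in\mathbb V$, the product $[a,b]_\theta=a_0b_0+\theta(a_0,b_0)$ depends only on the ${\bf A}$-components, since $\mathbb V$ is contained in the annihilator of ${\bf A}_\theta$. Consequently every monomial occurring in (\ref{dos}) — each being a product of depth two or three — depends only on $a_0,b_0,x_0,y_0$, so it is enough to verify (\ref{dos}) for arguments taken in ${\bf A}$. Moreover the induced extra multiplication ${\bf P}^*_\theta(a,b)=\tfrac23[a,b]_\theta+\tfrac13[b,a]_\theta$ has ${\bf A}$-component ${\bf P}^*(a_0,b_0)$ and $\mathbb V$-component $\tfrac23\theta(a_0,b_0)+\tfrac13\theta(b_0,a_0)$; since it enters (\ref{dos}) only as an argument of a further product, just its ${\bf A}$-component survives.

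Next I would evaluate each of the nine monomials on the left of (\ref{dos}) and the three on the right inside ${\bf A}_\theta$, splitting the result into ${\bf A}$- and $\mathbb V$-parts. By the previous paragraph a monomial such as $[b,[a,[x,y]_\theta]_\theta]_\theta$ contributes $b_0(a_0(x_0y_0))$ to its ${\bf A}$-part and $\theta(b_0,a_0(x_0y_0))$ to its $\mathbb V$-part, and similarly for the remaining terms, while the right-hand monomial ${\bf P}^*_\theta(a,b)(xy)$ contributes ${\bf P}^*(a_0,b_0)(x_0y_0)$ and $\theta({\bf P}^*(a_0,b_0),x_0y_0)$. Collecting components, the ${\bf A}$-part of (\ref{dos}) for ${\bf A}_\theta$ is exactly (\ref{dos}) for ${\bf A}$, which holds because ${\bf A}$ is terminal, whereas the $\mathbb V$-part is precisely the relation defining ${\rm Z_T^2}({\bf A},\mathbb V)$. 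Hence (\ref{dos}) holds on ${\bf A}_\theta$ if and only if its $\mathbb V$-part vanishes identically, i.e. if and only if $\theta\in{\rm Z_T^2}({\bf A},\mathbb V)$, which is the claim.

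The argument is essentially bookkeeping, so I do not expect a genuine obstacle; the only point requiring care is the consistent separation of components. In particular one must check that when ${\bf P}^*_\theta(a,b)$ is fed into a further product its $\mathbb V$-component is correctly annihilated, so that the $\mathbb V$-part of the right-hand side reads $\theta({\bf P}^*(a,b),\,\cdot\,)$ and matches the stated cocycle identity rather than producing spurious \emph{$\theta$-of-$\theta$} contributions. This is also what explains why the cocycle condition is phrased with ${\bf P}^*$ applied only to arguments of ${\bf A}$.
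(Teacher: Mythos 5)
Your proposal is correct and is precisely the direct componentwise verification that the paper omits: the lemma is stated with the remark that it ``can be verified directly,'' deferring to the cited method papers, and the standard argument there is exactly your decomposition of the identity \cref{dos} on ${\bf A}_\theta$ into an ${\bf A}$-part (which holds because ${\bf A}$ is terminal) and a ${\mathbb V}$-part (which is the defining relation of ${\rm Z_T^2}({\bf A},{\mathbb V})$), using that ${\mathbb V}\subseteq\operatorname{Ann}({\bf A}_\theta)$ so that only the ${\bf A}$-component of ${\bf P}^*_\theta(a,b)$ survives in further products. No gap; your care about the ${\bf P}^*_\theta$ term is exactly the one point that needs attention.
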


Recall that the {\it{annihilator}} of ${\bf A}$ is defined as the ideal $\operatorname{Ann}({\bf A} ) =\{ x\in {\bf A}:  x{\bf A}+{\bf A}x=0\}.$ Given $\theta \in {\rm Z_T^2}({\bf A}, {\mathbb V}),$ we call the set $\operatorname{Ann}(\theta)=\left\{ x\in {\bf A}:\theta (x, {\bf A} )+\theta ({\bf A},x ) = 0 \right\}$ the {\it{annihilator}} of $\theta.$ 

\begin{lemma}
\label{ann_of_ext}
$\operatorname{Ann}({\bf A}_{\theta }) = (\operatorname{Ann}(\theta) \cap \operatorname{Ann}({\bf A}))
 \oplus {\mathbb V}.$
\end{lemma}

\bigskip

Therefore, $0 \neq \mathbb{V} \subseteq \operatorname{Ann}({\bf A}_{\theta}).$ The following lemma shows that every algebra with a nonzero annihilator can be obtained in the way described above:

\begin{lemma}
Let ${\bf A}$ be an $n$-dimensional terminal algebra such that $\dim(\operatorname{Ann}({\bf A}))=m\neq0.$ Then there exists, up to isomorphism, a unique $(n-m)$-dimensional terminal algebra ${\bf A}^{\prime }$ and a bilinear map $\theta \in {\rm Z_T^2}({\bf A}, {\mathbb V})$ with $\operatorname{Ann}({\bf A}')\cap \operatorname{Ann}(\theta)=0,$ where $\mathbb V$ is a vector space of dimension $m,$ such that ${\bf A}\cong {\bf A}^{\prime }_{\theta}$ and ${\bf A}/\operatorname{Ann}({\bf A})\cong {\bf A}^{\prime }.$
\end{lemma}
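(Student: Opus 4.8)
The plan is to realize $\mathbf{A}$ explicitly as a central extension of its quotient by the annihilator, and then read off both the transversality condition and the uniqueness of $\mathbf{A}'$ from the description of $\operatorname{Ann}(\mathbf{A}_\theta)$ in Lemma~\ref{ann_of_ext}. Write $W=\operatorname{Ann}(\mathbf{A})$, so that $\dim W=m$, and set $\mathbf{A}'=\mathbf{A}/W$. Since $wW=Ww\subseteq w\mathbf{A}+\mathbf{A}w=0$ for $w\in W$, the annihilator is an ideal and the quotient is a well-defined $(n-m)$-dimensional algebra; because the class of terminal algebras is a variety (the identity obtained by expanding \eqref{tress} passes to quotients), $\mathbf{A}'$ is again terminal. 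I would then fix a vector-space complement $U$ of $W$ in $\mathbf{A}$, so that the canonical projection $\pi\colon\mathbf{A}\to\mathbf{A}'$ restricts to a linear isomorphism $U\to\mathbf{A}'$; let $s\colon\mathbf{A}'\to\mathbf{A}$ be the resulting linear section with $\pi\circ s=\mathrm{id}$. Define $\theta\colon\mathbf{A}'\times\mathbf{A}'\to W$ by $\theta(x,y)=s(x)s(y)-s(xy)$, which lands in $W=\ker\pi$ because $\pi(s(x)s(y))=xy=\pi(s(xy))$.

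Next I would check that the linear map $\varphi\colon\mathbf{A}'_\theta=\mathbf{A}'\oplus W\to\mathbf{A}$, $(x,w)\mapsto s(x)+w$, is an algebra isomorphism. It is a linear bijection since $U\oplus W=\mathbf{A}$, and the key computation uses that $W$ is the annihilator: for $w,w'\in W$ we get $\varphi(x,w)\varphi(y,w')=(s(x)+w)(s(y)+w')=s(x)s(y)$, while the product in the extension sends $(x,w),(y,w')$ to $(xy,\theta(x,y))$, whose image under $\varphi$ is $s(xy)+\theta(x,y)=s(x)s(y)$. Hence $\varphi$ respects multiplication. Because $\mathbf{A}'_\theta\cong\mathbf{A}$ is terminal, the Lemma stating that $\mathbf{A}_\theta$ is terminal if and only if the defining map is a cocycle forces $\theta\in{\rm Z_T^2}(\mathbf{A}',W)$, so $\theta$ is a cocycle on $\mathbf{A}'$ with values in $\mathbb{V}=W$ of dimension $m$; and by construction $\mathbf{A}/\operatorname{Ann}(\mathbf{A})=\mathbf{A}'$.

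Finally I would establish the transversality condition and uniqueness. Applying Lemma~\ref{ann_of_ext} yields $\operatorname{Ann}(\mathbf{A}'_\theta)=(\operatorname{Ann}(\theta)\cap\operatorname{Ann}(\mathbf{A}'))\oplus W$; since $\varphi$ is an isomorphism carrying $W$ onto $\operatorname{Ann}(\mathbf{A})=W$, we have $\dim\operatorname{Ann}(\mathbf{A}'_\theta)=\dim\operatorname{Ann}(\mathbf{A})=m=\dim W$, and comparing dimensions in the direct-sum decomposition forces $\operatorname{Ann}(\theta)\cap\operatorname{Ann}(\mathbf{A}')=0$. For uniqueness, suppose $\mathbf{A}''$ is any $(n-m)$-dimensional terminal algebra and $\theta'\in{\rm Z_T^2}(\mathbf{A}'',\mathbb{V})$ a cocycle with $\operatorname{Ann}(\mathbf{A}'')\cap\operatorname{Ann}(\theta')=0$ and $\mathbf{A}\cong\mathbf{A}''_{\theta'}$; then Lemma~\ref{ann_of_ext} gives $\operatorname{Ann}(\mathbf{A}''_{\theta'})=\mathbb{V}$, so quotienting out the central copy of $\mathbb{V}$ returns $\mathbf{A}''$ and hence $\mathbf{A}''\cong\mathbf{A}''_{\theta'}/\mathbb{V}\cong\mathbf{A}/\operatorname{Ann}(\mathbf{A})\cong\mathbf{A}'$. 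The only genuinely delicate point is this dimension bookkeeping: one must track that the central copy of $\mathbb{V}$ inside the extension accounts for the \emph{entire} annihilator, which is exactly what squeezes $\operatorname{Ann}(\theta)\cap\operatorname{Ann}(\mathbf{A}')$ down to zero; the remaining verifications—that $W$ is an ideal, that $\varphi$ is a homomorphism, and that terminality is inherited—are routine.
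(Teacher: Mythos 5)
Your proof is correct and follows exactly the standard Skjelbred--Sund construction that the paper defers to its references for: form $\mathbf{A}'=\mathbf{A}/\operatorname{Ann}(\mathbf{A})$, pick a linear section $s$, set $\theta(x,y)=s(x)s(y)-s(xy)$, verify the isomorphism, and read off both transversality and uniqueness of $\mathbf{A}'$ from Lemma~\ref{ann_of_ext}. No gaps; the deduction that $\theta$ is a cocycle from the terminality of $\mathbf{A}'_\theta\cong\mathbf{A}$ is exactly the intended use of the earlier lemma.
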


In particular, any finite-dimensional nilpotent algebra is a central extension of another nilpotent algebra of strictly smaller dimension. Thus, to classify all nilpotent terminal algebras of a fixed dimension, we need to classify cocycles of nilpotent terminal algebras ${\bf A}'$ of smaller dimension (with an additional condition $\operatorname{Ann}({\bf A}')\cap \operatorname{Ann}(\theta)=0$) and central extensions that arise from them.

\bigskip

We can reduce the class of extensions that we need to consider.

\begin{definition}
Let ${\bf A}$ be an algebra and $I$ be a subspace of ${\rm Ann}({\bf A})$. If ${\bf A}={\bf A}_0 \oplus I$
then $I$ is called an {\it annihilator component} of ${\bf A}$.
 A central extension of an algebra $\bf A$ without an annihilator component is called a non-split central extension.
\end{definition} 

Clearly, we are only interested in non-split extensions (in the contrary case we can cut off annihilator components lying in $\mathbb{V}$ until we obtain a non-split extension). 

Let us fix a basis $e_{1},\ldots,e_{s}$ of ${\mathbb V},$ and $\theta \in {\rm Z_T^2}({\bf A},{\mathbb V}).$ Then $\theta$ can be uniquely written as $\theta (x,y) =\theta (x,y) =\sum_{i=1}^s\theta_{i}(x,y) e_{i},$ where $\theta_{i}\in {\rm Z_T^2}({\bf A},\mathbb C).$ Moreover, $\operatorname{Ann}(\theta)=\operatorname{Ann}(\theta_{1})\cap \operatorname{Ann}(\theta_{2})\cap\cdots \cap \operatorname{Ann}(\theta_{s}).$ Further, $\theta \in {\rm B^2}({\bf A},{\mathbb V}) $\ if and only if all $\theta_{i}\in {\rm B^2}({\bf A},\mathbb C).$ Using this presentation, one can determine whether the extension corresponding to a cocycle $\theta$ is split:

\begin{lemma} \cite[Lemma 13]{hac16}
\label{split_lindep}
Let $\theta (x,y) =\sum_{i=1}^s\theta_{i}(x,y) e_{i}\in {\rm Z_T^2}({\bf A},{\mathbb V})$ be such that $\operatorname{Ann}(\theta)\cap \operatorname{Ann}({\bf A}) = 0.$ Then ${\bf A}_{\theta }$ has an annihilator component if and only if $[\theta_{1}],[\theta_{2}],\ldots,[\theta_{s}]$ are linearly dependent in ${\rm H_T^2}({\bf A},\mathbb C).$
\end{lemma}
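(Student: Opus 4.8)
The plan is to prove both implications by exploiting two facts. First, under the standing hypothesis $\operatorname{Ann}(\theta)\cap\operatorname{Ann}(\mathbf{A})=0$, Lemma~\ref{ann_of_ext} gives $\operatorname{Ann}(\mathbf{A}_\theta)=\mathbb{V}$, so every annihilator component must already lie inside $\mathbb{V}$. Second, a change of basis of $\mathbb{V}$ by an invertible matrix $C=(c_{ij})$ replaces the components by $\theta_j'=\sum_i c_{ij}\theta_i$, produces an isomorphic extension, and transforms the classes by the same invertible matrix, namely $[\theta_j']=\sum_i c_{ij}[\theta_i]$. Hence both ``having an annihilator component'' and ``linear (in)dependence of the classes'' are invariant under such a change of basis, and I am free to choose the most convenient basis of $\mathbb{V}$ in each direction.

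For the implication ``dependent $\Rightarrow$ split'', suppose $\sum_i\beta_i[\theta_i]=0$ with not all $\beta_i$ zero. Completing $(\beta_i)_i$ to the last column of an invertible matrix $C$, I may assume the new component $\theta_s'=\sum_i\beta_i\theta_i$ is a coboundary, say $\theta_s'=\delta f$ with $f\in\operatorname{Hom}(\mathbf{A},\mathbb{C})$, so $\theta_s'(x,y)=f(xy)$. I then claim $\langle e_s'\rangle$ is an annihilator component, with complement the ``graph'' subspace $\mathbf{A}_0=\{x+f(x)e_s':x\in\mathbf{A}\}\oplus\langle e_1',\dots,e_{s-1}'\rangle$. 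A direct computation of $[x+f(x)e_s',\,y+f(y)e_s']_\theta=xy+\theta(x,y)$ shows that the coefficient of $e_s'$ is exactly $f(xy)$, so the product equals $(xy+f(xy)e_s')+\sum_{j<s}\theta_j'(x,y)e_j'\in\mathbf{A}_0$, proving $\mathbf{A}_0$ is a subalgebra. Since $e_s'\in\mathbb{V}=\operatorname{Ann}(\mathbf{A}_\theta)$, the sum $\mathbf{A}_\theta=\mathbf{A}_0\oplus\langle e_s'\rangle$ is a decomposition of algebras in which $\langle e_s'\rangle$ is an annihilator component.

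For the converse, assume $\mathbf{A}_\theta=\mathbf{A}_0\oplus I$ with $0\neq I\subseteq\operatorname{Ann}(\mathbf{A}_\theta)=\mathbb{V}$, $\dim I=k\geq1$, and $\mathbf{A}_0$ a subalgebra. The projection $\pi:\mathbf{A}_\theta\to\mathbf{A}$ along $\mathbb{V}$ is an algebra epimorphism with kernel $\mathbb{V}$, and $\pi(\mathbf{A}_0)=\mathbf{A}$; a dimension count then gives $\dim(\mathbf{A}_0\cap\mathbb{V})=s-k$ and $\mathbb{V}=(\mathbf{A}_0\cap\mathbb{V})\oplus I$. I choose a basis of $\mathbb{V}$ with $e_1',\dots,e_{s-k}'\in\mathbf{A}_0$ and $e_{s-k+1}',\dots,e_s'\in I$. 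A linear section $\sigma:\mathbf{A}\to\mathbf{A}_0$ of $\pi|_{\mathbf{A}_0}$ has the form $\sigma(x)=x+g(x)$ with $g\in\operatorname{Hom}(\mathbf{A},\mathbb{V})$; since $\mathbf{A}_0$ is a subalgebra, $\sigma(x)\sigma(y)-\sigma(xy)=\theta(x,y)-g(xy)$ lies in $\mathbf{A}_0\cap\mathbb{V}=\langle e_1',\dots,e_{s-k}'\rangle$. Reading off the coefficients of $e_{s-k+1}',\dots,e_s'$ then yields $\theta_j'=\delta g_j$ for $j>s-k$, so these $k\geq1$ transformed classes $[\theta_j']$ vanish; pulling back through the invertible matrix $C$ exhibits a nontrivial linear relation among $[\theta_1],\dots,[\theta_s]$.

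The direct products and dimension counts are routine; the step needing the most care is bookkeeping the change of basis of $\mathbb{V}$ — confirming that it genuinely yields an isomorphic extension (so that the existence of an annihilator component transfers) and that it acts on the cohomology classes by the same invertible matrix, which is exactly what lets me read a dependence relation off a single vanishing transformed class. I also want to be sure that the ``$\oplus$'' in the definition of annihilator component is read as a decomposition of algebras with $I$ an ideal, which is automatic here since $I\subseteq\operatorname{Ann}(\mathbf{A}_\theta)$ forces all cross-products to vanish.
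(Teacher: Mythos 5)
Your proof is correct, and it is essentially the standard Skjelbred--Sund argument that the paper delegates to the cited reference \cite[Lemma 13]{hac16}: the forward direction realizes the complement as the graph $\{x+f(x)e_s'\}$ of a potential $f$ for the dependent combination, and the converse reads off coboundaries from a linear section of the projection $\mathbf{A}_\theta\to\mathbf{A}$ restricted to the complement $\mathbf{A}_0$. Your explicit attention to the two points the cited proof leaves implicit --- that $\operatorname{Ann}(\mathbf{A}_\theta)=\mathbb{V}$ forces any annihilator component into $\mathbb{V}$, and that the basis change of $\mathbb{V}$ acts by the same invertible matrix on cocycles and on cohomology classes --- is exactly where the hypotheses are used, and is handled correctly.
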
  

\bigskip

Some cocycles give rise to isomorphic extensions:

\begin{lemma}
Let $\theta, \vartheta \in {\rm Z_T^2}({\bf A}, {\mathbb V})$ be such that $[\theta] = [\vartheta].$ Then ${\bf A}_\theta \cong {\bf A}_\vartheta.$  
\end{lemma}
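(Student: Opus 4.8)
The plan is to exhibit an explicit algebra isomorphism between the two central extensions. Since $[\theta]=[\vartheta]$ in ${\rm H_T^2}({\bf A},{\mathbb V})$, the difference $\theta-\vartheta$ is a coboundary, so there is a linear map $f\colon {\bf A}\to{\mathbb V}$ with $\theta(x,y)-\vartheta(x,y)=f(xy)$ for all $x,y\in{\bf A}$, by the definition of ${\rm B^2}({\bf A},{\mathbb V})$ and of $\delta f$. Both ${\bf A}_\theta$ and ${\bf A}_\vartheta$ have the same underlying vector space ${\bf A}\oplus{\mathbb V}$, so it is natural to look for an isomorphism that fixes the ${\mathbb V}$-direction and shears the ${\bf A}$-direction by $f$. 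Concretely, I would define $\varphi\colon{\bf A}_\theta\to{\bf A}_\vartheta$ by $\varphi(x+v)=x-f(x)+v$ for $x\in{\bf A}$, $v\in{\mathbb V}$.

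The first step is to check that $\varphi$ is a linear bijection; this is immediate, since $\varphi$ is the identity plus a nilpotent shear, and its inverse is given by $x+v\mapsto x+f(x)+v$. The substantive step is verifying that $\varphi$ is a homomorphism. Using the definition of the product in ${\bf A}_\theta$, one has $\varphi([x+v,y+w]_\theta)=\varphi(xy+\theta(x,y))=xy-f(xy)+\theta(x,y)$. On the other side, because ${\mathbb V}\subseteq\operatorname{Ann}({\bf A}_\vartheta)$ by Lemma~\ref{ann_of_ext}, the ${\mathbb V}$-components $-f(x)+v$ and $-f(y)+w$ do not contribute to the product, so $[\varphi(x+v),\varphi(y+w)]_\vartheta=xy+\vartheta(x,y)$. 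Comparing the two expressions, equality reduces exactly to $\theta(x,y)-\vartheta(x,y)=f(xy)$, which is the coboundary relation obtained above.

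I do not expect any genuine obstacle here: the result is formal and follows the standard template for central extensions. The only points requiring care are that the shear must be taken in the correct direction, so that the sign of $f(xy)$ matches $\theta-\vartheta$ rather than $\vartheta-\theta$, and that one invokes the centrality of ${\mathbb V}$ in ${\bf A}_\vartheta$ to discard the cross terms in the product. Everything else is a direct comparison of the two bilinear products on ${\bf A}\oplus{\mathbb V}$.
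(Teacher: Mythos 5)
Your proof is correct and is exactly the standard shear argument that the paper relies on (the paper itself omits the proof, deferring to the references \cite{hac16,cfk18}, where this same construction $\varphi(x+v)=x-f(x)+v$ with $\theta-\vartheta=\delta f$ appears). The sign of the shear is handled correctly, and the observation that the $\mathbb{V}$-components do not enter the product $[-,-]_\vartheta$ is immediate from its definition, so nothing is missing.
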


%Thus, we need to compute the space ${\rm B^2}({\bf A}, \mathbb V).$ This is easily done with the aid of

%\begin{lemma}
%The map $\delta$ restricts to a linear isomorphism between $({\bf A}^2)^*$ and ${\rm B^2}({\bf A}, \mathbb C),$  where $({\bf A}^2)^* %\subseteq {\bf A}^*$ is the dual space of the square of ${\bf A}^2.$
%\end{lemma}

By above, the isomorphism classes of extensions correspond to certain equivalence classes on ${\rm H_T^2}({\bf A},{\mathbb V}).$ These classes can be given in terms of actions of certain groups on this space. In particular, let $\operatorname{Aut}({\bf A})$ be the automorphism group of ${\bf A},$ let $\phi \in \operatorname{Aut}({\bf A}),$ and let $\psi \in \operatorname{GL}(\mathbb V).$ For $\theta \in {\rm Z_T^2}({\bf A},{\mathbb V})$ define 
\[\phi \theta(x,y) =\theta (\phi(x), \phi(y)), ~ \psi\theta (x,y) = \psi(\theta(x,y)).\] 
Then $\phi\theta, \psi\theta \in {\rm Z_T^2}({\bf A},{\mathbb V}).$ Hence, $\operatorname{Aut}({\bf A})$ and $\operatorname{GL}(\mathbb V)$ act on ${\rm Z_T^2}({\bf A},{\mathbb V}).$ It is easy to verify that ${\rm B^2}({\bf A},{\mathbb V})$ is invariant under both actions. Therefore, we have induced actions on ${\rm H_T^2}({\bf A},{\mathbb V}).$

\begin{lemma}
Let $\theta, \vartheta \in {\rm Z_T^2}({\bf A},{\mathbb V})$ be such that $\operatorname{Ann}({\bf A}_\theta) = \operatorname{Ann}({\bf A}_\vartheta) = \mathbb{V}.$ Then ${\bf A}_\theta \cong {\bf A}_\vartheta$ if and only if there exist a $\phi \in \operatorname{Aut}({\bf A}), \psi \in \operatorname{GL}(\mathbb V)$ such that $[\phi\theta] = [\psi\vartheta].$
\end{lemma}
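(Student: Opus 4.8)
The plan is to prove the two implications separately, in each case working with the block decomposition of a linear self-map of $\mathbf{A}\oplus\mathbb{V}$ relative to the splitting ${\bf A}_\theta={\bf A}\oplus\mathbb{V}$.

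For the ``if'' direction, suppose $\phi\in\operatorname{Aut}({\bf A})$ and $\psi\in\operatorname{GL}(\mathbb{V})$ satisfy $[\phi\theta]=[\psi\vartheta]$, so that $\phi\theta-\psi\vartheta=\delta f$ for some linear map $f\colon{\bf A}\to\mathbb{V}$. I would then write down the candidate isomorphism $F\colon{\bf A}_\vartheta\to{\bf A}_\theta$ given by $F(x+v)=\phi(x)+\bigl(f(x)+\psi(v)\bigr)$ for $x\in{\bf A}$, $v\in\mathbb{V}$, and verify directly that it respects the products. Since $\mathbb{V}$ annihilates both extensions, the product $[F(x+v),F(y+v')]_\theta$ depends only on the ${\bf A}$-parts $\phi(x),\phi(y)$ and equals $\phi(x)\phi(y)+\theta(\phi(x),\phi(y))=\phi(xy)+(\phi\theta)(x,y)$ because $\phi$ is an automorphism; on the other side $F([x+v,y+v']_\vartheta)=F(xy+\vartheta(x,y))=\phi(xy)+f(xy)+(\psi\vartheta)(x,y)$. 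The coboundary identity $(\phi\theta)(x,y)=(\psi\vartheta)(x,y)+\delta f(x,y)$ makes the two sides coincide, and bijectivity of $F$ is immediate from its block lower-triangular shape with invertible diagonal blocks $\phi$ and $\psi$.

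For the ``only if'' direction, the decisive observation---and the only place where the hypothesis $\operatorname{Ann}({\bf A}_\theta)=\operatorname{Ann}({\bf A}_\vartheta)=\mathbb{V}$ is used---is that any isomorphism $F\colon{\bf A}_\vartheta\to{\bf A}_\theta$ carries annihilator onto annihilator, whence $F(\mathbb{V})=\mathbb{V}$. Writing $F$ in block form, this forces the off-diagonal block $\mathbb{V}\to{\bf A}$ to vanish, so $F(x+v)=\phi(x)+\gamma(x)+\psi(v)$ for linear maps $\phi\colon{\bf A}\to{\bf A}$, $\gamma\colon{\bf A}\to\mathbb{V}$ and $\psi:=F|_{\mathbb{V}}\in\operatorname{GL}(\mathbb{V})$; invertibility of $F$ then forces $\phi$ to be invertible as well. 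I would next compare $F([x+v,y+v']_\vartheta)$ with $[F(x+v),F(y+v')]_\theta$ and separate ${\bf A}$- and $\mathbb{V}$-components: the ${\bf A}$-component yields $\phi(xy)=\phi(x)\phi(y)$, so $\phi\in\operatorname{Aut}({\bf A})$, while the $\mathbb{V}$-component yields $(\phi\theta)(x,y)-(\psi\vartheta)(x,y)=\gamma(xy)=\delta\gamma(x,y)$. Hence $\phi\theta-\psi\vartheta\in{\rm B^2}({\bf A},\mathbb{V})$, that is $[\phi\theta]=[\psi\vartheta]$. The entire argument is bookkeeping with bilinearity and the definitions of the two actions once the block structure is fixed; the single genuine ingredient, and thus the main obstacle to watch, is the annihilator argument giving $F(\mathbb{V})=\mathbb{V}$, which is exactly what the normalization $\operatorname{Ann}({\bf A}_\theta)=\operatorname{Ann}({\bf A}_\vartheta)=\mathbb{V}$ guarantees.
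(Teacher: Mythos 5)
Your proof is correct and is exactly the standard Skjelbred--Sund block-decomposition argument that the paper relies on (it states this lemma without proof, deferring to \cite{hac16,cfk18}, where the same computation appears). Both directions check out: the candidate isomorphism $F(x+v)=\phi(x)+f(x)+\psi(v)$ works for sufficiency, and the annihilator-preservation argument correctly forces the block triangular form needed for necessity.
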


\bigskip

Now we rewrite the above lemma in a form more suitable for computations.

Let ${\mathbb U}$ be a finite-dimensional vector space over $\mathbb C.$ The {\it{Grassmannian}} $G_{k}({\mathbb U})$ is the set of all $k$-dimensional linear subspaces of ${\mathbb V}.$ Let $G_{s}({\rm H_T^2}({\bf A},\mathbb C) )$ be the Grassmannian of subspaces of dimension $s$ in ${\rm H_T^2}({\bf A},\mathbb C).$ There is a natural action of $\operatorname{Aut} ({\bf A})$ on $G_{s}({\rm H_T^2}({\bf A},\mathbb C)):$ for $\phi \in \operatorname{Aut} ({\bf A}), W=\langle [\theta_{1}],[\theta_{2}],\dots,[\theta_{s}] \rangle \in G_{s}({\rm H_T^2}({\bf A},\mathbb C)),$ define 
\[\phi W=\langle [\phi \theta_{1}], [\phi \theta_{2}],\dots,[\phi \theta_{s}]\rangle.\]
Note that this action is compatible with the action of $\operatorname{Aut} ({\bf A})$ on ${\rm H_T^2}({\bf A},{\mathbb V})$ and the above presentation of a cocycle as a collection of $s$ elements of ${\rm H_T^2}({\bf A},\mathbb C).$ Denote the orbit of $W$ under the action of $\operatorname{Aut} ({\bf A})$ by $\mathrm{Orb}(W).$ It is easy to check that given two bases of a subspace 
\begin{equation*}
W=\langle [\theta_{1}],[\theta_{2}],\dots,[\theta_{s}] \rangle =\langle [\vartheta_{1}],[\vartheta_{2}],\dots,[\vartheta_{s}]
\rangle \in G_{s}({\rm H^2}({\bf A},\mathbb C)),
\end{equation*}
we have $\cap_{i=1}^s\operatorname{Ann}(\theta_{i})\cap \operatorname{Ann}({\bf A}) =\cap_{i=1}^s\operatorname{Ann}(\vartheta_{i})\cap \operatorname{Ann}({\bf A}).$ Therefore, we can introduce the set

\begin{equation*}
T_{s}({\bf A}) =\left\{ W=\left\langle [\theta_{1}],%
[\theta_{2}],\dots,[\theta_{s}] \right\rangle \in
G_{s}({\rm H^2}({\bf A},\mathbb C) ) : \cap_{i=1}^s \operatorname{Ann}(\theta_{i})\cap \operatorname{Ann}({\bf A}) =0\right\},
\end{equation*}
which is stable under the action of $\operatorname{Aut}({\bf A}).$

\medskip

Let us denote by $E({\bf A},{\mathbb V})$ the set of all {\it non-split} central extensions of ${\bf A}$ by ${\mathbb V}.$ By Lemmas \ref{ann_of_ext} and \ref{split_lindep}, we can write
\begin{equation*}
E({\bf A},{\mathbb V}) =\left\{ {\bf A}_{\theta }:\theta (x,y) =\sum_{i=1}^s\theta_{i}(x,y) e_{i}\mbox{ and }\langle [\theta_{1}],[\theta_{2}],\dots,
[\theta_{s}] \rangle \in T_{s}({\bf A})\right\}.
\end{equation*}
Also, we have the next result, which can be proved as \cite[Lemma 17]{hac16}.

\begin{lemma}
 Let $\theta(x,y) =\sum_{i=1}^s \theta_{i}(x,y) e_{i}$ and $\vartheta(x,y) = \sum_{i=1}^s\vartheta_{i}(x,y) e_{i}$ be such that ${\bf A}_{\theta},{\bf A}_{\vartheta }\in E({\bf A},{\mathbb V}).$  Then the algebras ${\bf A}_{\theta }$ and ${\bf A}_{\vartheta }$ are isomorphic if and only if 
\[\mathrm{Orb}\langle [\theta_{1}], [\theta_{2}],\dots,[\theta_{s}] \rangle = \mathrm{Orb}\langle [\vartheta_{1}],[\vartheta
_{2}],\dots,[\vartheta_{s}] \rangle. \]
\end{lemma}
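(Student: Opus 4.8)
The plan is to reduce this to the preceding lemma characterising isomorphism of extensions whose annihilator equals $\mathbb{V}$, and then to translate the cohomological condition $[\phi\theta]=[\psi\vartheta]$ into the orbit equality on the Grassmannian. First I would record that, since ${\bf A}_\theta,{\bf A}_\vartheta\in E({\bf A},{\mathbb V})$, the defining condition $\langle[\theta_1],\dots,[\theta_s]\rangle\in T_s({\bf A})$ forces $\cap_{i=1}^s\operatorname{Ann}(\theta_i)\cap\operatorname{Ann}({\bf A})=0$, that is $\operatorname{Ann}(\theta)\cap\operatorname{Ann}({\bf A})=0$; by Lemma~\ref{ann_of_ext} this yields $\operatorname{Ann}({\bf A}_\theta)=\mathbb{V}$, and likewise $\operatorname{Ann}({\bf A}_\vartheta)=\mathbb{V}$. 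Hence the earlier lemma applies, and ${\bf A}_\theta\cong{\bf A}_\vartheta$ if and only if there exist $\phi\in\operatorname{Aut}({\bf A})$ and $\psi\in\operatorname{GL}(\mathbb{V})$ with $[\phi\theta]=[\psi\vartheta]$.

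Next I would compute the two actions in coordinates. From the definitions $\phi$ acts componentwise, so the $i$-th component of $\phi\theta$ is $\phi\theta_i$; whereas if $\psi(e_j)=\sum_i\psi_{ij}e_i$, then the $i$-th component of $\psi\vartheta$ is $\sum_j\psi_{ij}\vartheta_j$. Therefore the equality $[\phi\theta]=[\psi\vartheta]$ in ${\rm H_T^2}({\bf A},{\mathbb V})$ is equivalent, componentwise, to $[\phi\theta_i]=\sum_j\psi_{ij}[\vartheta_j]$ for all $i$. Since $\psi$ is invertible, the matrix $(\psi_{ij})$ is invertible, so this says exactly that the subspaces $\langle[\phi\theta_1],\dots,[\phi\theta_s]\rangle$ and $\langle[\vartheta_1],\dots,[\vartheta_s]\rangle$ coincide; and by the definition of the $\operatorname{Aut}({\bf A})$-action on the Grassmannian the former equals $\phi\langle[\theta_1],\dots,[\theta_s]\rangle$. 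Thus the existence of such a pair $\phi,\psi$ is the same as the equality $\phi\langle[\theta_1],\dots,[\theta_s]\rangle=\langle[\vartheta_1],\dots,[\vartheta_s]\rangle$, which gives the orbit equality and proves the forward direction.

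For the converse, given $\phi\in\operatorname{Aut}({\bf A})$ with $\phi\langle[\theta_1],\dots,[\theta_s]\rangle=\langle[\vartheta_1],\dots,[\vartheta_s]\rangle$, I would invoke Lemma~\ref{split_lindep}: because both extensions are non-split, each of the families $[\theta_i]$ and $[\vartheta_j]$ is linearly independent in ${\rm H_T^2}({\bf A},\mathbb C)$, so both spanning sets are bases of one and the same $s$-dimensional subspace. The change-of-basis matrix $(\psi_{ij})$ expressing $[\phi\theta_i]$ in terms of $[\vartheta_j]$ is then invertible, and defines the required $\psi\in\operatorname{GL}(\mathbb{V})$ via $\psi(e_j)=\sum_i\psi_{ij}e_i$. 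By the coordinate computation of the previous paragraph this gives $[\phi\theta]=[\psi\vartheta]$, whence ${\bf A}_\theta\cong{\bf A}_\vartheta$.

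The only point genuinely requiring care—and the place where the non-split hypothesis is actually used—is precisely this linear independence in the converse direction: it is what lets one upgrade a bare equality of subspaces into an honest invertible $\psi$ realising the $\operatorname{GL}(\mathbb{V})$-action. Everything else is the bookkeeping translation between the $\operatorname{Aut}({\bf A})\times\operatorname{GL}(\mathbb{V})$-action on full cocycles and the induced $\operatorname{Aut}({\bf A})$-action on the Grassmannian of their cohomology components.
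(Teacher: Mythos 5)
Your proof is correct and takes the route the paper intends: the paper omits the proof of this lemma, stating only that it "can be proved as \cite[Lemma 17]{hac16}", and that argument is precisely your reduction — first to the preceding lemma characterising isomorphism via $[\phi\theta]=[\psi\vartheta]$ (using Lemma~\ref{ann_of_ext} to verify $\operatorname{Ann}({\bf A}_\theta)=\operatorname{Ann}({\bf A}_\vartheta)=\mathbb{V}$), and then the componentwise change-of-basis translation between the $\operatorname{GL}(\mathbb{V})$-action on cocycles and equality of spans in the Grassmannian. Your identification of the non-split hypothesis (linear independence of the $[\theta_i]$, via Lemma~\ref{split_lindep} or directly from membership in $G_s$) as the point that makes the converse work is exactly right.
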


Thus, there exists a one-to-one correspondence between the set of $\operatorname{Aut}({\bf A}) $-orbits on $T_{s}({\bf A})$ and the set of isomorphism classes of $E({\bf A},{\mathbb V}).$ Consequently, we have a procedure that allows us, given a terminal algebra ${\bf A}^{\prime }$ of dimension $n,$ to construct all non-split central extensions of ${\bf A}^{\prime }.$ This procedure is as follows:

\medskip

{\centerline{\it Procedure}}

\begin{enumerate}
\item For a given (nilpotent) terminal algebra $\bf{A}^{\prime }$
of dimension $n-s,$ determine ${T}_{s}(\bf{A}^{\prime })$
and $\operatorname{Aut}(\bf{A}^{\prime }).$

\item Determine the set of $\operatorname{Aut}(\bf{A}^{\prime })$-orbits on $%
{T}_{s}(\bf{A}^{\prime }).$

\item For each orbit, construct the terminal algebra corresponding to one of its
representatives.
\end{enumerate}

The above described method gives all (Leibniz and non-Leibniz) terminal algebras. But we are interested in developing this method in such a way that it only gives non-Leibniz terminal algebras, because the classification of all Leibniz algebras is given in \cite{demir}. Clearly, any central extension of a non-Leibniz terminal  algebra is non-Leibniz. But a Leibniz algebra may have extensions which are not Leibniz algebras. More precisely, let ${\bf  L}$ be a Leibniz algebra and $\theta \in {\rm Z_T^2}({\bf  L}, {\mathbb C}).$ Then ${\bf  L}_{\theta }$ is a Leibniz algebra if and only if 
\begin{equation*}
\theta ( x, yz ) = \theta (xy, z )+ \theta (y, xz  ) 
\end{equation*}
for all $x,y,z\in {\bf  L}.$ Define the subspace ${\rm Z_L^2}({\bf  L},{\mathbb C})$ of ${\rm Z_T^2}({\bf  L},{\mathbb C})$ by
\begin{equation*}
{\rm Z_L^2}({\bf  L},{\mathbb C}) =\left\{\begin{array}{c} \theta \in {\rm Z_T^2}({\bf  L},{\mathbb C}) : \theta ( x, yz ) = \theta (xy, z ) + \theta (y, xz  ) \text{ for all } x, y,z\in {\bf  L}\end{array}\right\}.
\end{equation*}
Observe that ${\rm B^2}({\bf  L},{\mathbb C})\subseteq{\rm Z_L^2}({\bf  L},{\mathbb C}).$
Let ${\rm H_L^2}({\bf  L},{\mathbb C}) =%
{\rm Z_L^2}({\bf  L},{\mathbb C}) \big/{\rm B^2}({\bf  L},{\mathbb C}).$ Then ${\rm H_L^2}({\bf  L},{\mathbb C})$ is a subspace of $%
{\rm H_T^2}({\bf  L},{\mathbb C}).$ Define 
\begin{eqnarray*}
{\bf R}_{s}({\bf  L})  &=&\left\{ {\bf W}\in {T}_{s}({\bf  L}) :{\bf W}\in G_{s}({\rm H_L^2}({\bf  L},{\mathbb C}) ) \right\}, \\
{\bf U}_{s}({\bf  L})  &=&\left\{ {\bf W}\in {T}_{s}({\bf  L}) :{\bf W}\notin G_{s}({\rm H_L^2}({\bf  L},{\mathbb C}) ) \right\}.
\end{eqnarray*}
Then ${T}_{s}({\bf  L}) ={\bf R}_{s}(
{\bf  L})$ $\mathbin{\mathaccent\cdot\cup}$ ${\bf U}_{s}(
{\bf  L}).$ The sets ${\bf R}_{s}({\bf  L}) $
and ${\bf U}_{s}({\bf  L})$ are stable under the action
of $\operatorname{Aut}({\bf  L}).$ Thus, the terminal  algebras
corresponding to the representatives of $\operatorname{Aut}({\bf  L}) $%
-orbits on ${\bf R}_{s}({\bf  L})$ are Leibniz  algebras,
while those corresponding to the representatives of $\operatorname{Aut}({\bf  L}%
) $-orbits on ${\bf U}_{s}({\bf  L})$ are not
Leibniz algebras. Hence, we may construct all non-split non-Leibniz terminal algebras $%
\bf{A}$ of dimension $n$ with $s$-dimensional annihilator 
from a given terminal algebra $\bf{A}%
^{\prime }$ of dimension $n-s$ in the following way:

\begin{enumerate}
\item If $\bf{A}^{\prime }$ is non-Leibniz, then apply the Procedure.

\item Otherwise, do the following:

\begin{enumerate}
\item Determine ${\bf U}_{s}(\bf{A}^{\prime })$ and $%
\operatorname{Aut}(\bf{A}^{\prime }).$

\item Determine the set of $\operatorname{Aut}(\bf{A}^{\prime })$-orbits on ${\bf U%
}_{s}(\bf{A}^{\prime }).$

\item For each orbit, construct the terminal  algebra corresponding to one of its
representatives.
\end{enumerate}
\end{enumerate}

\medskip

\subsection{Notations}
Let us introduce the following notations. Let ${\bf A}$ be a terminal algebra with
a basis $e_{1},e_{2}, \ldots, e_{n}.$ Then by $\Delta_{ij}$\ we will denote the
bilinear form
$\Delta_{ij}:{\bf A}\times {\bf A}\longrightarrow \mathbb C$
with $\Delta_{ij}(e_{l},e_{m}) = \delta_{il}\delta_{jm}.$
The set $\left\{ \Delta_{ij}:1\leq i, j\leq n\right\}$ is a basis for the linear space of 
bilinear forms on ${\bf A},$ so every $\theta \in
{\rm Z^2}({\bf A},\bf \mathbb V )$ can be uniquely written as $%
\theta ={\sum }_{i,j=1}^nc_{ij}\Delta_{ij},$ where $%
c_{ij}\in \mathbb C.$
We also denote by

$$\begin{array}{lll}
\T {i*}{j}& \mbox{the }j\mbox{th }i\mbox{-dimensional nilpotent Leibniz  algebra}, \\
\T i{j}& \mbox{the }j\mbox{th }i\mbox{-dimensional nilpotent non-Leibniz terminal algebra}, \\
{\mathfrak{N}}_i& \mbox{the }i\mbox{-dimensional algebra with zero product}, \\
({\bf A})_{i,j} & j\mbox{th }i\mbox{-dimensional central extension of }\bf A. \\
\end{array}$$

Also, it is easy to see that every central extension of $\mathfrak{N}_i$ is a Leibniz algebra.

\subsection{The algebraic classification of 3-dimensional nilpotent terminal algebras}

Observe that a 2-dimensional nilpotent algebra is either $\Nl_2,$ or is isomorphic to 
\begin{align*}%\label{alg-A_3}
    \begin{array}{llllll}
        \T {2*}{01} &:& e_1 e_1 = e_2. 
    \end{array}
\end{align*}
Both of these algebras are nilpotent of index less than 4, and hence are terminal. All central extensions of 2-dimensional nilpotent algebras were described in~\cite{cfk18}. By a direct verification, we have the following list of all 3-dimensional nilpotent terminal algebras with annihilator of codimension 1 or 2
\begin{align}\label{3-dim-term}
    \begin{array}{lllllllll}
        \T {3*}{01}&:&\T {2*}{01}\oplus\Nl_1 &:& e_1 e_1 = e_2;  \\
        \T {3*}{02}&:&(\mathfrak{N}_2)_{3,1} &:& e_1 e_1 = e_3, & e_2 e_2=e_3; \\
        \T {3*}{03}&:& (\mathfrak{N}_2)_{3,2} &:& e_1 e_2=e_3, & e_2 e_1=-e_3; \\
        \T {3*}{04}(\lb)&:&(\mathfrak{N}_2)_{3,3} &:& e_1 e_1 = \lambda e_3, &  e_2 e_1=e_3,  & e_2 e_2=e_3; \\
        \T {3*}{05}&:&(\T {2*}{01})_{3,1} &:& e_1 e_1 = e_2, & e_1 e_2=e_3; \\
        \T 3{01}(\lb)&:&(\T {2*}{01})_{3,2} &:& e_1 e_1 = e_2, & e_1 e_2=\lb e_3, & e_2 e_1=e_3. 
    \end{array}
\end{align}
Notice that only $\T 3{01}(\lb)$ from list \cref{3-dim-term} is non-Leibniz. 

\subsection{The algebraic classification of $4$-dimensional nilpotent terminal algebras}

Analyzing the list of all the 4-dimensional nilpotent  algebras with annihilator of codimension 2, we have only one terminal non-Leibniz algebra:
$$
\begin{array}{lllllllll}
    %\T 4{01}=(\mathfrak{N}_2)_{4,4} &:&  e_1 e_2=e_3, & e_2 e_1=e_4; \\
    %\T 4{02}=(\mathfrak{N}_2)_{4,5} &:& e_1 e_1 = e_3,  & e_2 e_1=e_4; \\
    %\T 4{03}=(\mathfrak{N}_2)_{4,6} &:& e_1 e_1 = e_3,   & e_2 e_2=e_4; \\
    %\T 4{04}=(\mathfrak{N}_2)_{4,7} &:& e_1 e_1 = e_3, & e_1 e_2=e_3, & e_2 e_1=e_4,  & e_2 e_2=e_3; \\
    %\T 4{05}(\lb\geq 0)=(\mathfrak{N}_2)_{4,8} &:& e_1 e_1 = e_3,  & e_2 e_1=e_4,  & e_2 e_2=e_3+\lambda e_4; \\
    %\T 4{06}(\lb)=(\mathfrak{N}_2)_{4,9} &:& e_1 e_1 = e_3, & e_1 e_2=e_4, & e_2 e_1=\lambda e_4; \\
    \T 4{02}&:&(\T {2*}{01} )_{4,1} &:& e_1 e_1 = e_2, & e_1 e_2=e_4, & e_2 e_1=e_3.
\end{array}
$$
To complete the algebraic classification of 4-dimensional nilpotent terminal algebras, we need to describe all the 1-dimensional terminal non-Leibniz extensions of the algebras from the table \cref{3-dim-term}. This is done in Subsections~\ref{aut-and-H^2}--\ref{ext-T_05^3*} and summarized in the main theorem of the first part of the paper.

\begin{theorem}\label{main-alg}
Let $\mathbf A$ be a 4-dimensional nilpotent non-Leibniz terminal algebra over $\mathbb C$. Then $\mathbf A$ is isomorphic to one of the algebras $\T 4{01}-\T 4{44}$ or $\D 4{01}-\D 4{40}$ found below.
\end{theorem}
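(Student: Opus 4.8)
The plan is to apply the Skjelbred--Sund machinery developed in this subsection exhaustively to the list \cref{3-dim-term}, treating the single non-Leibniz case $\T 3{01}(\lb)$ together with the Leibniz algebras $\T {3*}{01}$--$\T {3*}{05}$ separately, since the method for producing non-Leibniz extensions differs between the two (for non-Leibniz $\mathbf A'$ we run the full Procedure, whereas for Leibniz $\mathbf A'$ we must restrict to orbits on ${\bf U}_{s}(\mathbf A')$). Because $\dim\operatorname{Ann}=1$ is forced for the new extensions we want, we take $s=1$ throughout, so $\mathbb V\cong\mathbb C$ and $T_1(\mathbf A')$ consists of one-dimensional subspaces $\la[\0]\ra$ of ${\rm H_T^2}(\mathbf A',\mathbb C)$ satisfying the annihilator condition. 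The algebra $\T 4{02}$ arising as the codimension-2 extension $(\T {2*}{01})_{4,1}$ is recorded directly and does not enter this orbit computation.

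The key steps, carried out once for each base algebra $\mathbf A'$ in \cref{3-dim-term}, are as follows. First I would compute $\operatorname{Aut}(\mathbf A')$ explicitly as a matrix group in the chosen basis. Second, I would solve the defining cocycle condition (the degree-4 terminal identity rewritten for ${\rm Z_T^2}$, using ${\bf P}^*$ from \cref{terminal_associated}) to determine ${\rm Z_T^2}(\mathbf A',\mathbb C)$ as a span of the forms $\Dt ij$, then quotient by ${\rm B^2}(\mathbf A',\mathbb C)=\la\delta f\ra$ to obtain a basis of ${\rm H_T^2}(\mathbf A',\mathbb C)$; in the Leibniz cases I would additionally identify the subspace ${\rm H_L^2}(\mathbf A',\mathbb C)$ cut out by $\theta(x,yz)=\theta(xy,z)+\theta(y,xz)$ so as to isolate ${\bf U}_1(\mathbf A')$. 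Third, I would write a general cocycle $\0=\sum c_{ij}[\Dt ij]$ subject to the annihilator constraint $\operatorname{Ann}(\0)\cap\operatorname{Ann}(\mathbf A')=0$, compute the action $\phi\0(x,y)=\0(\phi x,\phi y)$ of each $\phi\in\operatorname{Aut}(\mathbf A')$ on the coordinates $c_{ij}$, and determine a transversal for the resulting orbits. Fourth, for each orbit representative I would read off the multiplication table of $\mathbf A'_\0=\mathbf A'\oplus\mathbb C e_4$ and label it $\T 4{\bullet}$ or $\D 4{\bullet}$.

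The main obstacle will be the orbit computation in Step three, especially for the families $\T {3*}{04}(\lb)$ and $\T 3{01}(\lb)$, where the base algebra itself depends on a parameter and the automorphism group acts on a cohomology space of relatively high dimension. Here the $\operatorname{Aut}(\mathbf A')$-action on the coefficient vector $(c_{ij})$ is a nonlinear (projective, since we work in the Grassmannian) action, and disentangling the orbits requires careful case analysis according to which $c_{ij}$ vanish and how the rescaling freedom in $\psi\in\operatorname{GL}(\mathbb V)=\mathbb C^\times$ can be used to normalize the surviving parameters. This is precisely where the one-, two-, and three-parameter families of the final classification emerge, and where spurious isomorphisms between candidate representatives must be ruled out by checking that no automorphism identifies them. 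The bookkeeping is routine in principle but voluminous, which is why the detailed computations are deferred to Subsections~\ref{aut-and-H^2}--\ref{ext-T_05^3*}; assembling their outputs yields the complete list $\T 4{01}$--$\T 4{44}$, $\D 4{01}$--$\D 4{40}$ asserted in the theorem.
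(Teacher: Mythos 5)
Your proposal is correct and follows essentially the same route as the paper: the paper's proof of Theorem~\ref{main-alg} is precisely the Skjelbred--Sund procedure applied to the algebras of list \cref{3-dim-term}, computing $\operatorname{Aut}$, ${\rm H_T^2}$ and (for the Leibniz bases) the complement of ${\rm H_L^2}$, then classifying $\operatorname{Aut}$-orbits on the relevant one-dimensional subspaces and recording the codimension-2 case $\T 4{02}$ separately. The only caveat is that the heavy lifting — the case-by-case orbit analysis in Subsections~\ref{aut-and-H^2}--\ref{ext-T_05^3*} and the final identification of coinciding representatives — is exactly where the content of the proof lies, and your outline correctly locates but does not carry out that work.
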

\begin{Proof}
 The proof is split into several steps presented in Subsections~\ref{aut-and-H^2}--\ref{ext-T_05^3*} below.
\end{Proof}

\subsubsection{Automorphism and cohomology groups of $3$-dimensional nilpotent terminal algebras}\label{aut-and-H^2}

\[
\tiny
\begin{tabular}{|c|c|c|c|}
\hline 
$\A$ & $\aut\A$ & ${\rm Z_T^2}(\A)$ & ${\rm H_T^2}(\A)$\\
\hline

$\T {3*}{01}$ 
&
$\begin{pmatrix}
x &    0  &  0\\
y &  x^2  &  u\\
z &   0  &  v
\end{pmatrix}$ 
& 
$\Big\langle
\begin{array}{l}
     \Dt 11, \Dt 12, \Dt 13, \Dt 21,\\
     \Dt 23, \Dt 31, \Dt 32, \Dt 33
\end{array}
\Big\rangle$ 
&
$\Big\langle
\begin{array}{l}
     \Dl 12, \Dl 13, \Dl 21, \Dl 23\\
     \Dl 31, \Dl 32, \Dl 33
\end{array}
\Big\rangle$
\\
\hline

$\T {3*}{02}$ 
&
$\begin{pmatrix}
x &    y  &  0\\
(-1)^{n+1} y &  (-1)^n x  &  0\\
z &   u  &  x^2+y^2
\end{pmatrix}$ 
&
$\Big\langle
\begin{array}{l}
     \Dt 11, \Dt 12, \Dt 13, \Dt 21\\
     \Dt 22, \Dt 23, \Dt 31, \Dt 32
\end{array}
\Big\rangle$ 
& 
$\Big\langle
\begin{array}{l}
     \Dl 11, \Dl 12, \Dl 13, \Dl 21\\
     \Dl 23, \Dl 31, \Dl 32
\end{array}
\Big\rangle$ 
\\
\hline

$\T {3*}{03}$ 
&
$\begin{pmatrix}
x &    y  &  0\\
z &    u  &  0\\
v &    w  &  xu-yz
\end{pmatrix}$ 
&
$\Big\langle
\begin{array}{l}
     \Dt 11, \Dt 12, \Dt 13, \Dt 21\\
     \Dt 22, \Dt 23, \Dt 31, \Dt 32
\end{array}
\Big\rangle$ 
&
$\Big\langle
\begin{array}{l}
     \Dl 11, \Dl 12, \Dl 13, \Dl 22\\
     \Dl 23, \Dl 31, \Dl 32
\end{array}
\Big\rangle$
\\
\hline

$\T {3*}{04}$ 
&
    $\begin{pmatrix}
    x &      y    &  0\\
    -\lambda y    &    x-y   &  0\\
    z &      u      &  x^2-xy+\lambda y^2
    \end{pmatrix}$
&
$\Big\langle
\begin{array}{l}
     \Dt 11, \Dt 12, \Dt 13, \Dt 21\\
     \Dt 22, \Dt 23, \Dt 31, \Dt 32
\end{array}
\Big\rangle$ 
&
$\Big\langle
\begin{array}{l}
     \Dl 11, \Dl 12, \Dl 13, \Dl 21\\
     \Dl 23, \Dl 31, \Dl 32
\end{array}
\Big\rangle$
\\
\hline

$\T {3*}{05}$ 
&
$\begin{pmatrix}
x &    0  &  0\\
y &  x^2  &  0\\
z &   xy  &  x^3
\end{pmatrix}$ 
& 
$\Big\langle
\begin{array}{l}
     \Dt 11, \Dt 12, \Dt 13,\\
     \Dt 21, \Dt 22 - 3\Dt 31
\end{array}
\Big\rangle$ 
&
$\Big\langle
\begin{array}{l}
     \Dl 13, \Dl 21,\\
     \Dl 22 - 3\Dl 31
\end{array}
\Big\rangle$
\\
\hline

$\T 3{01}$ 
&
$\begin{pmatrix}
x &               0  &  0\\
y &             x^2  &  0\\
z &   (\lambda+1)xy  &  x^3
\end{pmatrix}$ 
& 
$\begin{cases}
\Big\langle
\begin{array}{l}
     \Dt 11, \Dt 12, -\Dt 13 + 3\Dt 31,\\
     \Dt 21,  {\red\Dt 13 + \Dt 22}, \Dt 23  
\end{array}
\Big\rangle & \lb=0\\
\Big\langle
\begin{array}{l}
     \Dt 11, \Dt 12, (\lb-1)\Dt 13 + 3\Dt 31,\\
     \Dt 21, \red{\Dt 13 + \Dt 22}
\end{array}
\Big\rangle & \lb\ne 0
\end{cases}$
&

$\begin{cases}
\Big\langle
\begin{array}{l}
     \Dl 12, -\Dl 13 + 3\Dl 31,\\
     {\red\Dl 13 + \Dl 22}, \Dl 23  
\end{array}
\Big\rangle & \lb=0\\
\Big\langle
\begin{array}{l}
     \Dl 12, (\lb-1)\Dl 13 + 3\Dl 31,\\
     \red{\Dl 13 + \Dl 22}
\end{array}
\Big\rangle & \lb\ne 0
\end{cases}$
\\
\hline

\end{tabular}
\] 

\vskip0.5cm

Since the algebras $\T {3*}{01}, \T {3*}{02},\T {3*}{03},\T {3*}{04}$ and $\T {3*}{05}$ are Leibniz, it is natural to find the relation between the Leibniz and terminal cohomologies of these algebras in order to exclude those cocycles which give Leibniz algebras. 

\vskip0.5cm

$$\begin{tabular}{|c|c|c|}
	\hline 
	$\A$ & ${\rm H_L^2}(\A)$ & ${\rm H_T^2}(\A)$\\
	\hline
	
	$\T {3*}{01}$ 
	&
	$\langle \Dl 12, \Dl 13, \Dl 31, \Dl 33 \rangle$
	&
	${\rm H_L^2}(\A)\oplus \langle \Dl 21, \Dl 23, \Dl 32 \rangle$
	\\
	\hline
	
	$\T {3*}{02}$ 
	&
	$\langle \Dl 11, \Dl 12, \Dl 21 \rangle$ 
	& 
	${\rm H_L^2}(\A)\oplus\langle \Dl 13, \Dl 23, \Dl 31, \Dl 32\rangle$ 
	\\
	\hline
	
	$\T {3*}{03}$ 
	&
	$\Big\langle
	\begin{array}{l}
	\Dl 11, \Dl 12, \Dl 13 - \Dl 31,\\
	\Dl 22, \Dl 23 - \Dl 32
	\end{array}
	\Big\rangle$ 
	&
	${\rm H_L^2}(\A)\oplus\langle \Dl 31, \Dl 32 \rangle$
	\\
	\hline
	
	$\T {3*}{04}$ 
	&
	$\begin{cases}
	\la \Dl 11, \Dl 12, \Dl 21 \ra,   &  \lb\ne 0\\
	\la\Dl 11, \Dl 12, \Dl 21, \Dl 23\ra, &  \lb=0
	\end{cases}$
	&
	$
	\begin{cases}
	{\rm H_L^2}(\A)\oplus \la \Dl 13, \Dl 23, \Dl 31, \Dl 32 \ra,   &  \lb\ne 0\\
	{\rm H_L^2}(\A)\oplus \la \Dl 13, \Dl 31, \Dl 32 \ra,           &  \lb=0
	\end{cases}$
	\\
	\hline
	
	$\T {3*}{05}$ 
	&
	$\la \Dl 13\ra$
	&
	${\rm H_L^2}(\A)\oplus \la \Dl 21,\Dl 22 - 3\Dl 31 \ra$
	\\
	\hline
	
	\end{tabular}$$

%\newpage

\subsubsection{$1$-dimensional  central extensions of $\T {3*}{01}$}

Let us use the following notations: 
\begin{align*}
    \nb 1 = \Dl 12, \nb 2 = \Dl 13, \nb 3 = \Dl 31, \nb 4 = \Dl 33, \nb 5 = \Dl 21, \nb 6 = \Dl 23, \nb 7 = \Dl 32.    
\end{align*}
Take $\0=\sum_{i=1}^7\af_i\nb i\in {\rm H_T^2}(\T {3*}{01}).$
If 
$$
\phi=
\begin{pmatrix}
x &    0  &  0\\
y &  x^2  &  u\\
z &   0  &  v
\end{pmatrix}\in\aut{\T {3*}{01}},
$$
then
$$
\phi^T\begin{pmatrix}
0& \af_1& \af_2\\
\af_5& 0& \af_6\\
\af_3& \af_7& \af_4
\end{pmatrix} \phi=
\begin{pmatrix}
\af^*& \af_1^*& \af_2^*\\
\af_5^*& \af^{**}& \af_6^*\\
\af_3^*& \af_7^*& \af_4^*
\end{pmatrix},
$$
where
\begin{align*}
    \af_1^* &= x^2(\af_1x + \af_7z),\\
    \af_2^* &= u(\af_1x + \af_7z) + v(\af_2x + \af_6y  + \af_4z),\\
    \af_3^* &= u(\af_5x + \af_6z) + v(\af_3x + \af_7y + \af_4z),\\
    \af_4^* &= v((\af_6 + \af_7)u + \af_4v),\\
    \af_5^* &= x^2(\af_5x + \af_6z),\\
    \af_6^* &= \af_6x^2v,\\
    \af_7^* &= \af_7x^2v.
\end{align*}
Hence, $\phi\langle\0\rangle=\langle\0^*\rangle,$ where $\0^*=\sum\limits_{i=1}^7 \af_i^*  \nb i.$
 
 We are interested in $\0$ with $(\af_5,\af_6,\af_7) \neq (0,0,0).$ Moreover, the condition $\theta \in \mathbf{T}_1 (\T{3*}{01})$ gives us $(\alpha_2, \alpha_3, \alpha_4, \alpha_6, \alpha_7) \neq (0,0,0,0,0)$ and 
 $(\alpha_1 + \gamma\alpha_2, \alpha_5 + \gamma\alpha_3, \alpha_6, \alpha_7, \alpha_6 + \gamma\alpha_4, \alpha_7 + \gamma\alpha_4) \neq (0,0,0,0,0,0)$ for all $\gamma \in \mathbb{C}.$
 
\begin{enumerate}

\item Let $\alpha_6 \neq 0, \alpha_7 \neq 0$ and $\af_1 \af_6 - \af_5 \af_7=0.$
    Taking $v =\frac{1}{x^2 \af_7},$ 
    $z=-\frac{x \af_1}{\af_7}$ and $y=\frac{x (\af_1 \af_4 - \af_3 \af_7)}{\af_7^2},$
     we get the family of representatives 
     $$ 
     \langle  \alpha_2^\star \nabla_2 + \alpha_4^\star \nabla_4 +  \alpha_6^\star \nabla_6+  \nabla_7\rangle,
     $$
    where 

\begin{align*}
    \as 2 &= \frac 1{\af_7^3x}(\af_1\af_4\af_6 - \af_1\af_4\af_7 - \af_3\af_6\af_7 + \af_2\af_7^2),\\
    \as 4 &= \frac 1{\af_7^2x^4}(ux^2\af_7(\af_6 + \af_7) + \af_4),\\
    \af_6^\star &= \frac{\af_6}{\af_7}.
\end{align*}

  \begin{enumerate}
      \item $\af_6\neq -\af_7$
      and $\af_1\af_4\af_6 - \af_1\af_4\af_7 - \af_3\af_6\af_7 + \af_2\af_7^2\neq 0$.
Then choosing $u =-\frac{\af_4}{x^2\af_7(\af_6 + \af_7)}$, where $x$ is such that $\as 2=1$, we have the family of representatives 
     $ \langle \nabla_2 + \alpha  \nabla_6+  \nabla_7\rangle_{\af \neq -1,0}.$ 
      
      \item $\af_6\neq -\af_7$
      and $\af_1\af_4\af_6 - \af_1\af_4\af_7 - \af_3\af_6\af_7 + \af_2\af_7^2 = 0$. Then
      choosing $u = -\frac{\af_4}{x^2 \af_7 (\af_6 + \af_7)},$ we have the family of representatives 
     $ \langle \alpha  \nabla_6+  \nabla_7\rangle_{\af \neq -1,0}.$

      \item $\af_6=- \af_7,$ $\af_4=0$
      and $\af_2 \neq - \af_3$. Then choosing $x = \frac{\af_2 + \af_3}{\af_7},$ we have the representative 
     $ \langle \nabla_2 -  \nabla_6+  \nabla_7\rangle,$ which will be joined with the family $ \langle \nabla_2 + \alpha  \nabla_6+  \nabla_7\rangle_{\af \neq -1,0}.$

      \item $\af_6=- \af_7,$ $\af_4=0$
      and $\af_2 = - \af_3$. Then
       we have the representative 
     $ \langle   -  \nabla_6+  \nabla_7\rangle,$
     which will be joined with the family         $ \langle    \alpha  \nabla_6+  \nabla_7\rangle_{\af \neq -1,0}.$ 

      \item $\af_6=- \af_7$ and $\af_4\neq 0$.
      Then choosing $x = \sqrt[4]{\frac{\af_4}{\af_7^2}},$ we have the family of representatives 
     $ \langle \af \nabla_2 +\nabla_4  -  \nabla_6+  \nabla_7\rangle.$
    It gives only two distinct orbits with representatives 
         $ \langle  \nabla_2 +\nabla_4  -  \nabla_6+  \nabla_7\rangle$ and 
              $ \langle \nabla_4  -  \nabla_6+  \nabla_7\rangle.$

\end{enumerate}

\item Let $\alpha_6 \neq 0, \alpha_7 \neq 0$ and $\af_1 \af_6 - \af_5 \af_7\ne 0.$
    Taking 
    $z=-\frac{\af_5x}{\af_6},$ 
    $v = \frac x{\af_6\af_7}(\af_1\af_6 - \af_5\af_7),$
    $y = \frac x{\af_6\af_7}(\af_4\af_5 - \af_3\af_6),$ 
    and 
    $u = \frac x{\af_6\af_7^2}(\af_3\af_6^2 - \af_2\af_6\af_7 + \af_4\af_5(\af_7 -\af_6)),$
     we get the family of representatives 
     $$ \langle   \nabla_1  +  \alpha_4^\star \nabla_4 +  \alpha_6^\star \nabla_6+  \nabla_7\rangle,$$
    where 

\begin{align*}
    \alpha_4^\star &=  \frac 1{\af_7^3x}(\af_3\af_6^2 + \af_1\af_4\af_7 + \af_3\af_6\af_7 - \af_2\af_6\af_7  - \af_2\af_7^2 - \af_4\af_5\af_6), \\
    \af_6^\star &= \frac{\af_6}{\af_7}.
\end{align*}
It gives two families of representatives of distinct orbits 
  $  \langle   \nabla_1  + \alpha  \nabla_6+  \nabla_7\rangle_{\af\neq 0} $
  and 
  $  \langle   \nabla_1  +  \nabla_4 +  \alpha   \nabla_6+  \nabla_7\rangle_{\af\neq 0}$ 
  depending on whether $\af_3\af_6^2 + \af_1\af_4\af_7 + \af_3\af_6\af_7 - \af_2\af_6\af_7  - \af_2\af_7^2 - \af_4\af_5\af_6=0$ or not.

    \item Let $\alpha_6 = 0, \alpha_7 \neq 0.$
    Taking 
    $y = \frac x{\af_7^2} (\af_1 \af_4 + \af_4 \af_5 - \af_3 \af_7),$ $z = -\frac{x \af_1}{\af_7},$ 
    and $u = -\frac{\af_4v}{\af_7} $
    we get  a family of representatives 
     $$ \langle \alpha_2^\star \nabla_2 + \alpha_5^\star \nabla_5 + \nabla_7\rangle,$$
where 
\begin{align*}
  \alpha_2^\star &= \frac 1{x \af_7^2}( \af_2 \af_7-\af_1 \af_4), \\ 
\alpha_5^\star &=  \frac{\af_5x}{\af_7v}.  
\end{align*}
\begin{enumerate}
        \item $\alpha_5 = 0$ and $\alpha_1\alpha_4 - \alpha_2\alpha_7=0$. Then 
        we have the representative $\langle\nabla_7\rangle,$
        which will be joined with the family    $ \langle \alpha  \nabla_6+  \nabla_7\rangle_{\af \neq 0}.$

        \item $\alpha_5 = 0$ and $\alpha_1\alpha_4 - \alpha_2\alpha_7\ne 0$. Then 
        choosing $x= \frac 1{\af_7^2}(\af_2 \af_7-\af_1 \af_4),$ 
        we have the representative $\langle\nabla_2 + \nabla_7\rangle.$
        which will be joined with the family    $ \langle \nabla_2 + \alpha  \nabla_6+  \nabla_7\rangle_{\af \neq 0}$,

        \item  $\alpha_5 \neq 0$ and $\alpha_1\alpha_4 - \alpha_2\alpha_7\ne 0$. Then  
        choosing $x= \frac 1{\af_7^2}(\af_2 \af_7-\af_1 \af_4)$ and
       $v=\frac{\af_5\af_7x^2}{\af_2\af_7-\af_1\af_4}$
        we get the representative $\langle \nabla_2 + \nabla_5 + \nabla_7 \rangle.$

        \item  $\alpha_5 \neq 0$ and $\alpha_1\alpha_4 - \alpha_2\alpha_7=0$. Then  
        choosing 
        $v = \frac {\af_5x}{\af_7}$
        we get the representative $\langle  \nabla_5 + \nabla_7 \rangle.$
    \end{enumerate}

    \item $\alpha_6 \neq 0$ and $\alpha_7 = 0.$
    Taking 
    $y = \frac x{\af_6^2} (\af_1 \af_4 + \af_4 \af_5 - \af_2 \af_6),$ 
    $z = -\frac{x \af_5}{\af_6},$   and $u = -\frac{\af_4v}{\af_6}$
    we get the family of representatives 
     $$ \langle \alpha_1^\star \nabla_1  + \alpha_3^\star \nabla_3 + \nabla_6\rangle,$$
where 
\begin{align*}
   \alpha_1^\star &=  \frac{\af_1x}{\af_6v}, \\ 
\alpha_3^\star &=  \frac 1{x \af_6^2}(\af_3 \af_6-\af_4 \af_5). 
\end{align*}
  
   \begin{enumerate}
       \item $\af_1\neq 0$ and $\af_3 \af_6- \af_4 \af_5\ne 0$. Then
       we have the representative 
            $  \langle \nabla_1  +  \nabla_3 + \nabla_6\rangle.$

       \item $\af_1 = 0$ and $\af_3 \af_6- \af_4 \af_5\ne 0$. Then
       we have the representative $  \langle  \nabla_3 + \nabla_6\rangle.$

       \item $\af_1 \neq 0$ and $\af_3 \af_6 - \af_4 \af_5=0$. Then
       we have the representative $  \langle  \nabla_1 + \nabla_6\rangle.$

       \item $\af_1 = 0$ and $\af_3 \af_6 - \af_4 \af_5=0$. Then
       we have the representative $  \langle  \nabla_6\rangle.$

   \end{enumerate}
   \item $\alpha_5 \neq 0$, $\af_6=0$ and $\alpha_7=0.$ 
    Taking 
    $u =\frac{-v x \af_3 - v z \af_4}{x \af_5}$ and $x = \frac{1}{\sqrt[3]{\af_5}},$
    we get a family of representatives 
    $$ \langle \alpha_1^\star \nabla_1 + \alpha_2^\star \nabla_2 + \alpha_4^\star \nabla_4 + \nabla_5\rangle,$$
where 
\begin{align*}
   \alpha_1^\star &= \frac{\af_1}{\af_5}, \\ 
   \alpha_2^\star &= \frac v{\af_5^2x^3}((\af_2\af_5 -\af_1\af_3)x + \af_4(\af_5 - \af_1)z), \\
   \alpha_4^\star &= \frac{\af_4v^2}{\af_5x^3}. 
\end{align*}

    \begin{enumerate}
        \item $\alpha_4 \neq 0$ and $\af_1 - \af_5\ne 0$. Then 
        choosing $z= -\frac{(\af_2\af_5 -\af_1\af_3)x}{\af_4 (\af_1 -\af_5)}$ and $v = \sqrt{\frac{\af_5x^3}{\af_4}}$
        we have the family of representatives of distinct orbits
        $ \langle \alpha \nabla_1 + \nabla_4 + \nabla_5\rangle_{\alpha \neq 1}.$ 

        \item $\alpha_4 = 0$, $\af_1 - \af_5\ne 0$ and $\af_2\af_5-\af_1\af_3=0$. Then 
        we have the family of representatives of distinct orbits
        $ \langle \alpha \nabla_1 + \nabla_5\rangle_{\alpha \neq 1}$. The corresponding extensions are split. 
        
        \item $\alpha_4 = 0$, $\af_1 - \af_5\ne 0$ and $\af_2\af_5-\af_1\af_3=0$. Then 
        we have the family of representatives of distinct orbits
        $ \langle \alpha \nabla_1 +\nabla_2 +\nabla_5\rangle_{\alpha \neq 1}.$ 

       \item $\alpha_4 \neq 0$, $\af_1 - \af_5=0$ and $\af_2 - \af_3\ne 0$. Then 
        choosing $x = \frac 1{\af_4\af_5}(\af_2-\af_3)^2$,
        $v=\frac 1{\af_4^2\af_5}(\af_2- \af_3)^3$,
        $u = -\frac {\af_2 - \af_3}{\af_4^2\af_5^2}(\af_4^2\af_5z + (\af_2 - \af_3)^2\af_3)$
        we have the representative 
             $ \langle  \nabla_1 + \nabla_2 +\nabla_4+ \nabla_5\rangle.$

       \item $\alpha_4 \neq 0$, $\af_1 - \af_5=0$ and $\af_2 - \af_3= 0$. Then 
        choosing $v=\sqrt {\frac{\af_5x^3}{\af_4}}$ and $u=-\frac v{\af_5x}(\af_3x + \af_4z)$
        we have the representative 
             $ \langle  \nabla_1 + \nabla_4+ \nabla_5\rangle,$
              which will be joined with the family $\langle \af \nabla_1 +  \nabla_4+ \nabla_5 \rangle_{\af \neq 1}$.

       \item $\alpha_4 = 0$, $\af_1 - \af_5=0$ and $\af_2 - \af_3\ne 0$. Then 
        choosing $u=-\frac{\af_3v}{\af_5}$ and $v=\frac{\af_5x^2}{\af_2 - \af_3}$
        we have the representative 
             $ \langle  \nabla_1 + \nabla_2+ \nabla_5\rangle,$
              which will be joined with the family $\langle \af \nabla_1 +  \nabla_2+ \nabla_5 \rangle_{\af \neq 1}$.

       \item $\alpha_4 = 0$, $\af_1 - \af_5=0$ and $\af_2 - \af_3= 0$. Then choosing $u=-\frac{\af_3v}{\af_5}$
        we have the representative 
             $ \langle  \nabla_1 +  \nabla_5\rangle,$
             which gives a split extension.

    \end{enumerate}
     \end{enumerate}

 \      
      
         %Note that 
  % $\langle   \nabla_1  +  \nabla_4 +     \nabla_7\rangle =     \langle   \nabla_2  + \nabla_7\rangle$
   %and 
   % $\langle   \nabla_1   +     \nabla_7\rangle =     \langle   \nabla_7\rangle.$
 
Summarizing, we have the following representatives of distinct orbits: 

$   
\begin{array}{l}
 \langle   \nabla_1  +  \nabla_4 +  \alpha   \nabla_6+  \nabla_7\rangle_{\af\neq 0},\\
 \langle   \nabla_1  + \alpha  \nabla_6+  \nabla_7\rangle_{\af\neq 0},\\
 \langle  \nabla_2 +\nabla_4  -  \nabla_6+  \nabla_7\rangle,\\
 \langle \nabla_2 + \nabla_5 + \nabla_7 \rangle,
\end{array}
\begin{array}{l}
    \langle \nabla_2 + \alpha  \nabla_6+  \nabla_7\rangle,\\
    \langle \nabla_4  -  \nabla_6+  \nabla_7\rangle,\\
    \langle  \nabla_5 + \nabla_7 \rangle,\\
    \langle \alpha  \nabla_6+  \nabla_7\rangle, 
\end{array}
\begin{array}{l}
  \langle  \nabla_1 + \nabla_6\rangle,\\
  \langle \nabla_1  +  \nabla_3 + \nabla_6\rangle,\\  
  \langle  \nabla_3 + \nabla_6\rangle,\\
  \langle  \nabla_6\rangle,
\end{array}
\begin{array}{l}
     \langle \alpha \nabla_1 +\nabla_2 +\nabla_5\rangle,\\ 
    \langle  \nabla_1 + \nabla_2 +\nabla_4+ \nabla_5\rangle,\\
    \langle \alpha \nabla_1 + \nabla_4 + \nabla_5\rangle.
\end{array}
$

The corresponding algebras are:
\[\begin{array}{lllllllllll}
\T {4}{03}(\alpha)_{\alpha\neq 0}&:& e_1e_1 = e_2,& e_1e_2=e_4,& e_2e_3=\alpha e_4,& e_3e_2=e_4,& e_3e_3=e_4; \\
\T {4}{04}(\alpha)_{\alpha\neq 0}&:& e_1e_1 = e_2,& e_1e_2=e_4,& e_2e_3=\alpha e_4,& e_3e_2=e_4; \\
\T {4}{05} &:& e_1e_1 = e_2,& e_1e_3=e_4,& e_2e_3=- e_4,& e_3e_2=e_4,& e_3e_3=e_4; \\
\T {4}{06}  &:& e_1e_1 = e_2,& e_1e_3=e_4,& e_2e_1= e_4,& e_3e_2=e_4; \\
\T {4}{07}(\alpha) &:& e_1e_1 = e_2,& e_1e_3=e_4,& e_2e_3=\alpha e_4,& e_3e_2=e_4; \\
\T {4}{08}  &:& e_1e_1 = e_2,&  e_2e_3=- e_4,& e_3e_2=e_4,& e_3e_3=e_4; \\
\T {4}{09}  &:& e_1e_1 = e_2,&  e_2e_1= e_4,& e_3e_2=e_4;  \\
\T {4}{10}(\alpha) &:& e_1e_1 = e_2,&   e_2e_3=\alpha e_4,& e_3e_2=e_4;  \\
\T {4}{11} &:& e_1e_1 = e_2,& e_1e_2=e_4,& e_2e_3=  e_4; \\
\T {4}{12} &:& e_1e_1 = e_2,& e_1e_2=e_4,& e_2e_3=  e_4,& e_3e_1=e_4; \\
\T {4}{13} &:& e_1e_1 = e_2,&  e_2e_3=  e_4,& e_3e_1=e_4; \\
\T {4}{14} &:& e_1e_1 = e_2,&   e_2e_3=  e_4;  \\
\T {4}{15}(\alpha) &:& e_1e_1 = e_2,& e_1e_2= \alpha e_4,& e_1e_3=  e_4,& e_2e_1=e_4; \\
\T {4}{16} &:& e_1e_1 = e_2,& e_1e_2=  e_4,& e_1e_3=  e_4,& e_2e_1=e_4,& e_3e_3=e_4; \\
\T {4}{17}(\alpha) &:& e_1e_1 = e_2,& e_1e_2= \alpha e_4,& e_2e_1=e_4,& e_3e_3=  e_4.

\end{array}\]

%\newpage

\subsubsection{$1$-dimensional  central extensions of $\T {3*}{02}$}
Let us use the following notations: 
\begin{align*}
    \nb 1 = \Dl 11, \nb 2 = \Dl 12, \nb 3 = \Dl 21, \nb 4 = \Dl 13, \nb 5 = \Dl 23, \nb 6 = \Dl 31, \nb 7 = \Dl 32.    
\end{align*}
Take $\0=\sum_{i=1}^7\af_i\nb i\in {\rm H_T^2}(\T {3*}{02}).$
If 
$$
\phi=
\begin{pmatrix}
x          &       y& 0 \\
(-1)^{n+1}y& (-1)^nx& 0\\
z          & u      & x^2+y^2
\end{pmatrix}\in\aut{\T {3*}{02}},
$$
then
$$
\phi^T\begin{pmatrix}
\af_1& \af_2& \af_4\\
\af_3& 0& \af_5\\
\af_6& \af_7& 0
\end{pmatrix} \phi=
\begin{pmatrix}
\af_1^*+\af^*& \af_2^*& \af_4^*\\
\af_3^*& \af^*& \af_5^*\\
\af_6^*& \af_7^*& \af^{**}
\end{pmatrix},
$$
where
\begin{align*}
    \af_1^* &= \af_1(x^2 - y^2) + 2(-1)^{n+1}(\af_2 + \af_3)xy+(\af_4x+(-1)^{n+1}\af_5y+\af_6x+ (-1)^{n+1}\af_7y)z\\
	&\quad - (\af_4y +(-1)^n\af_5x+\af_6y+(-1)^n\af_7x)u,\\
	\af_2^* &= (-1)^n\af_2x^2 + \af_1xy + (-1)^{n+1}\af_3y^2 + (\af_4x + (-1)^{n+1}\af_5y)u + (\af_6y  + (-1)^n\af_7x)z,\\
	\af_3^* &= (-1)^n\af_3x^2 + \af_1xy + (-1)^{n+1}\af_2y^2 + (\af_4y + (-1)^n\af_5x)z + (\af_6x + (-1)^{n+1}\af_7y)u,\\
	\af_4^* &= (\af_4x+(-1)^{n+1}\af_5y)(x^2+y^2),\\
	\af_5^* &= (\af_4y+(-1)^n\af_5x)(x^2 + y^2),\\
	\af_6^* &= (\af_6x+(-1)^{n+1}\af_7y)(x^2+y^2),\\
	\af_7^* &= (\af_6y + (-1)^n\af_7x)(x^2+y^2).
\end{align*}
Hence, $\phi\langle\0\rangle=\langle\0^*\rangle,$ where $\0^*=\sum\limits_{i=1}^7 \af_i^*  \nb i.$
 
 We are interested in $\0$ with $(\af_4,\af_5,\af_6,\af_7) \neq (0,0,0,0).$  The condition $\theta \in \mathbf{T}_1 (\T{3*}{02})$ does not give us any new restrictions on parameters.

\begin{enumerate}
    \item $\alpha_6=0$ and $\alpha_7=0.$ Then $\af_4\ne 0$ or $\af_5\ne 0$. If $\af_4\ne 0$ and $\af_5=0$, then choosing $x=y=1$, we have $\af^*_4\ne 0$ and $\af^*_5\ne 0$. The same holds for $\af_4=0$ and $\af_5\ne 0$. Thus, we shall assume from the very beginning that $\alpha_4\ne 0$ and $\alpha_5 \neq 0$. 
    \begin{enumerate}
        \item $\alpha^2_4+\alpha^2_5\ne 0$. Then choosing $x=(-1)^{n+1}\frac{\alpha_4y}{\alpha_5},
                    z = \frac{(-1)^ny}{\af_5(\af_4^2 + \af_5^2)}(\af_1\af_4^2 + 2(\af_2 + \af_3)\af_4\af_5 - \af_1\af_5^2)$ and $u = \frac y{\af_5(\af_4^2 + \af_5^2)}(\af_2\af_4^2 -\af_3\af_5^2 - \af_1\af_4\af_5)$ we obtain the representative $\la\as 3\nb 3+\as 4\nb 4\ra$, where
                    \begin{align*}
                        \as 3 &= \frac{(-1)^ny^2}{\af_5^2}(\af_3\af_4^2 - \af_1\af_4\af_5 - \af_2\af_5^2),\\
                        \as 4 &= \frac{(-1)^{n+1}y^3}{\af_5^3}(\af_4^2 + \af_5^2)^2.
                    \end{align*}
                    Thus, we have two representatives $\langle \nabla_4 \rangle $ and $\langle \nabla_3 + \nabla_4 \rangle $ depending on whether $\af_3\af_4^2 - \af_1\af_4\af_5 - \af_2\af_5^2=0$ or not.
        \item  $\alpha^2_4+\alpha^2_5=0$. If $\af_4=-i\af_5$, then choosing $x=0$ and $y=n=1$, we obtain $\af^*_4=i\af^*_5$, so we shall assume from the very beginning that $\alpha_4= i \alpha_5.$ 
        Taking $n=0$,
        $u=\frac{i(\af_2x^2 + \af_1xy - \af_3y^2)}{\af_5(x + iy)}$ and 
        $z=-\frac{\af_3x^2 + \af_1xy - \af_2y^2}{\af_5(x + iy)}$
        we have the representative $\langle \alpha_1^{\star} \nabla_1 + i  \nabla_4 + \nabla_5 \rangle$, where
        \begin{align*}
            \as 1 &= \frac{x - iy}{\af_5(x + iy)^2}(\af_1 - i\af_2 - i\af_3).
        \end{align*}
Thus, we have two representatives $\langle i  \nabla_4 +  \nabla_5 \rangle$ and  $\langle \nabla_1 + i  \nabla_4 +  \nabla_5 \rangle$ depending on whether $\af_1 - i\af_2 - i\af_3=0$ or not.

        \end{enumerate}
        
\item $\alpha_6\neq 0$ or $\alpha_7\neq0$, and $\alpha_6^2+\alpha_7^2\ne 0$. 
 Then we may make $\alpha_6\ne 0$ and $\alpha_7\neq 0$. After that, if we choose $x=(-1)^{n+1}\frac{\alpha_6y}{\alpha_7}\ne 0,$ then $x^2+y^2=\frac {y^2}{\af^2_7}(\af_6^2+\af_7^2)\ne 0$, $\af^*_6=\frac{(-1)^{n+1}y^3}{\af_7^3}(\af_6^2 + \af_7^2)^2\ne 0$ and $\alpha_7^* = 0$. Thus, we shall assume that $\af_6\ne 0$ and $\af_7=0$ from the very beginning. Taking $y = 0,$ we get $\af^*_7=0$ and
 \begin{align*}
    \alpha_1^* &= x (x \alpha_1 + (-1)^{n + 1} u \alpha_5 + z (\alpha_4 + \alpha_6)),\\
	\af_2^* &= x ((-1)^n x \alpha_2 + u \alpha_4),\\
	\af_3^* &= x ((-1)^n x \alpha_3 + (-1)^n z \alpha_5 + u \alpha_6),\\
	\af_4^* &= x^3 \alpha_4,\\
	\af_5^* &= (-1)^n x^3 \alpha_5,\\
	\af_6^* &= x^3 \alpha_6.
\end{align*}
Consider $\alpha_2^* = 0, \alpha_3^* = 0$ as a system of linear equations on $z$ and $u.$ Its determinant is $(-1)^{n+1}x^2\alpha_4\alpha_5,$ so we have the following cases (each of which defines an $\operatorname{Aut}{\T {3*}{02}}$-invariant subset):
\begin{enumerate}
    \item $\alpha_4 \neq 0$ and $\alpha_5 \neq 0$. Taking $u = \frac{(-1)^{n+1} x \alpha_2}{\alpha_4}, z = \frac{x (\alpha_2 \alpha_6-\alpha_3 \alpha_4)}{\alpha_4 \alpha_5},$ we get $\alpha_2^* = 0, \alpha_3^* = 0$ and 
    \begin{align*}
    \alpha_1^* &= \frac{x^2}{\af_4\af_5}(\af_2\af_5^2 - \af_3\af_4^2 + \af_1\af_4\af_5 + \af_2\af_4\af_6 - \af_3\af_4\af_6 +\af_2\af_6^2),\\
	\af_4^* &= x^3 \alpha_4,\\
	\af_5^* &= (-1)^n x^3 \alpha_5,\\
	\af_6^* &= x^3 \alpha_6.
\end{align*}
Thus, we obtain two families of representatives of distinct orbits $\langle \nabla_1 +\alpha  \nabla_4+\beta  \nabla_5+\nabla_6 \rangle_{\alpha\beta\neq 0, \beta \in \mathbb C_{\geq 0}}$ and $\langle \alpha  \nabla_4+\beta \nabla_5+\nabla_6 \rangle_{\alpha\beta\neq 0, \beta \in \mathbb C_{\geq 0}}. $
    \item $\alpha_4 \neq 0$ and $\alpha_5 = 0$. Taking $u = \frac{(-1)^{n+1} x \alpha_2}{\alpha_4}$, we get $\alpha_2^* = 0$ and    
    \begin{align*}
    \alpha_1^* &= x (x \alpha_1 + z (\alpha_4 + \alpha_6)),\\
    \af_3^* &= \frac{(-1)^nx^2}{\af_4}(\af_3\af_4 - \af_2\af_6),\\
	\af_4^* &= x^3 \alpha_4,\\
	\af_6^* &= x^3 \alpha_6.
\end{align*}
    \begin{enumerate}
        \item $\alpha_4 + \alpha_6 \neq 0$. Taking $z = -\frac{x\alpha_1}{\alpha_4 + \alpha_6},$ we get two series of representatives of distinct orbits $\langle \nabla_3 +\alpha \nabla_4 +\nabla_6\rangle_{\alpha\neq -1,0}$ and $\langle \alpha \nabla_4 +\nabla_6\rangle_{\alpha\neq -1,0}.$
        \item $\alpha_4 + \alpha_6 = 0.$ In this case we get the series of representatives of distinct orbits $\langle \nabla_1 +\alpha  \nabla_3 - \nabla_4+\nabla_6 \rangle_{\alpha \in \mathbb C_{\geq 0}}$ and two distinct representatives $\langle \nabla_3 - \nabla_4+\nabla_6 \rangle$ and $\langle - \nabla_4+\nabla_6 \rangle$, which will be joined with the families $\langle \nabla_3 +\alpha \nabla_4 +\nabla_6\rangle_{\alpha\neq -1,0}$ and $\langle \alpha \nabla_4 +\nabla_6\rangle_{\alpha\neq -1,0}$ found above.
    \end{enumerate}
    \item $\alpha_4 = 0$ and $\alpha_5 \neq 0.$ Consider $\alpha_1^* = 0, \alpha_3^* = 0$ as a system of linear equations on $z,u.$ Its determinant is $x^2(\alpha_5^2 + \alpha_6^2).$ 
    \begin{enumerate}
        \item $\alpha_5^2 + \alpha_6^2 \neq 0.$ Taking $u = \frac{(-1)^n x (\alpha_1 \alpha_5 - \alpha_3 \alpha_6)}{ \alpha_5^2 +  \alpha_6^2}, z = -\frac{x ( \alpha_3 \alpha_5 + \alpha_1 \alpha_6)}{ \alpha_5^2 + \alpha_6^2},$ we get $\alpha^*_1 = \alpha^*_3 = 0$ and
        \begin{align*}
    \alpha_2^* &= (-1)^n x^2 \alpha_2,\\
	\alpha_5^* &= (-1)^n x^3 \alpha_5,\\
	\alpha_6^* &= x^3 \alpha_6.
\end{align*}
Therefore, we have two series of representatives of distinct orbits $\langle \nabla_2 + \alpha \nabla_5+\nabla_6 \rangle_{\alpha \in \mathbb C_{\geq 0}, \alpha \neq 0, i}$ and $\langle \alpha \nabla_5+\nabla_6 \rangle_{\alpha \in \mathbb C_{\geq 0}, \alpha \neq 0, i}.$
    \item $\alpha_5^2 + \alpha_6^2 = 0.$ We may assume that $\alpha_5 = i \alpha_6$ (if $\af_5=-i\af_6$, then choose $n=-1$). Taking $n=0$, $u = -\frac{x\alpha_3}{\alpha_6}$and $z = 0,$ we get $\alpha_3 = 0$ and 
    \begin{align*}
    \alpha_1^* &= x^2 (\alpha_1+i\af_3),\\
    \alpha_2^* &= x^2 \alpha_2,\\
	\alpha_5^* &= ix^3 \alpha_6,\\
	\alpha_6^* &= x^3 \alpha_6.
\end{align*}
Therefore, we get the family of representatives of distinct orbits $\langle \nabla_1 + \alpha \nabla_2 + i \nabla_5 + \nabla_6 \rangle$ and two representatives $\langle \nabla_2 + i \nabla_5 + \nabla_6 \rangle, \langle i \nabla_5 + \nabla_6 \rangle$, which will be joined with the families $\langle \nabla_2 + \alpha \nabla_5+\nabla_6 \rangle_{\alpha \in \mathbb C_{\geq 0}, \alpha \neq 0, i}$ and $\langle \alpha \nabla_5+\nabla_6 \rangle_{\alpha \in \mathbb C_{\geq 0}, \alpha \neq 0, i}$ found above. 
    \end{enumerate}
    \item $\alpha_4 = 0$ and $\alpha_5 = 0.$ Taking $z = -\frac{x\alpha_1}{\alpha_6}$ and $u = \frac{(-1)^{n+1}x\alpha_3}{\alpha_6},$ we get $\alpha_1^* = \alpha_3^* = 0$ and 
     \begin{align*}
    \alpha_2^* &= (-1)^nx^2 \alpha_2,\\
	\alpha_6^* &= x^3 \alpha_6.
\end{align*}
Therefore, we get two representatives $\langle \nabla_2 + \nabla_6 \rangle$ and $\langle \nabla_6 \rangle$, which will be joined with the families $\langle \nabla_2 + \alpha \nabla_5+\nabla_6 \rangle_{\alpha \in \mathbb C_{\geq 0}, \alpha \neq 0, i}$ and $\langle \alpha \nabla_5+\nabla_6 \rangle_{\alpha \in \mathbb C_{\geq 0}, \alpha \neq 0, i}$ found above. Note that representatives $\langle \nabla_2 + \nabla_6 \rangle$ and $\langle \nabla_6 \rangle$ define the same orbit.
\end{enumerate}

\item $\alpha_6\neq 0$ or $\alpha_7\neq0$, and $\alpha_6^2+\alpha_7^2=0$. Then we may assume that $\alpha_6= i \alpha_7\ne 0.$

\begin{enumerate}
    \item $\alpha_4 \neq \pm i \alpha_5.$ We may assume that $\alpha_4 \neq 0$ (otherwise $\af_5\ne 0$, so we may take $x=y=1$ to make $\af^*_4\ne 0$). Then choosing $n=0$ and $y=-\frac{x\alpha_5}{\alpha_4}x,$ we get $\af^*_4=\frac{x^3}{\af_4^3}(\af_4^2 + \af_5^2)^2\ne 0$ and $\alpha_5^* = 0$, so we shall assume that $\af_4\ne 0$ and $\af_5=0$ from the very beginning. Now, choosing $n=y=0$, $u = \frac{i x \alpha_3}{\alpha_7}, z = -\frac{x (i \alpha_3 \alpha_4 + \alpha_2 \alpha_7)}{\alpha_7^2},$ we get $\alpha_2^* = \alpha_3^* = \af^*_5=0$ and 
    \begin{align*}
    \alpha_1^* &= \frac{x^2}{\af_7^2}(\af_3\af_4\af_7 + \af_1\af_7^2 - \af_2\af_4\af_7 -i\af_3\af_4^2 - i\af_2\af_7^2 -i\af_3\af_7^2),\\
    \alpha_4^* &= x^3 \alpha_4,\\
	\alpha_6^* &= ix^3 \alpha_7,\\
	\alpha_7^* &= x^3 \alpha_7.
\end{align*}
Thus, we get two series of representatives of distinct orbits $\langle \nabla_1 + \alpha \nabla_4 + i \nabla_6 + \nabla_7 \rangle_{\alpha \neq 0}$ and $\langle \alpha \nabla_4 + i \nabla_6 + \nabla_7 \rangle_{\alpha \neq 0}.$
    
    %Choosing $x=\frac{\alpha_4}{\alpha_5}y$ and $u=-\frac{-\alpha_3x^2+\alpha_1xy+\alpha_2y^2}{\alpha_6x+\alpha_7y},z=-\frac{-\alpha_2x^2+\alpha_1xy+\alpha_3y^3+(\alpha_4x+\alpha_5y)u}{\alpha_6y-\alpha_7x}$ we have the family of representatives $\langle \alpha_1^{\star} \nabla_1+\alpha_4^{\star}\nabla_4 +i\nabla_6+\nabla_7 \rangle.$ Now, by one more action of a suitable automorphism we have 
    
   % \begin{enumerate}
  %      \item if $\alpha_1^{\star}\neq 0,$ then we have the family of representatives $\langle \nabla_1+\alpha \nabla_4 +i\nabla_6+\nabla_7 \rangle_{\alpha\neq 0}.$
  %      \item if $\alpha_1^{\star}= 0,$ then we have the family of representatives $\langle \alpha \nabla_4 +i\nabla_6+\nabla_7 \rangle_{\alpha\neq 0}.$

  %  \end{enumerate}
    
    \item $\alpha_4= i \alpha_5$. Then choosing $n=0$ we have 
    \begin{align*}
    \alpha_1^* &= x^2 \alpha_1 - x (2 y (\alpha_2 + \alpha_3) + (u - i z) (\alpha_5 + \alpha_7)) -  y (y \alpha_1 + (i u + z) (\alpha_5 + \alpha_7)),\\
    \alpha_2^* &= x^2 \alpha_2 - y (y \alpha_3 + u \alpha_5 -i z \alpha_7) +  x (y \alpha_1 + i u \alpha_5 + z \alpha_7),\\
    \alpha_3^* &= (x + i y) z \alpha_5 + x (y \alpha_1 + x \alpha_3 + i u \alpha_7) -  y (y \alpha_2 + u \alpha_7),\\
    \alpha_4^* &= i (x + i y) (x^2 + y^2) \alpha_5,\\
    \alpha_5^* &= (x + i y) (x^2 + y^2) \alpha_5,\\
	\alpha_6^* &= i (x + i y) (x^2 + y^2) \alpha_7,\\
	\alpha_7^* &= (x + i y) (x^2 + y^2) \alpha_7.
\end{align*}
    Consider $\alpha_1^* = 0, \alpha_3^* = 0$ as a system of linear equations in $u,z.$ Its determinant is $i(x+iy)^2(\alpha_7^2 - \alpha_5^2).$
    \begin{enumerate}
        \item $\alpha_5^2 - \alpha_7^2 \neq 0$. Then by choosing the appropriate values of $u$ and $z$ we get $\alpha_1^* = 0, \alpha_3^* = 0$, $\alpha_2^* = i(i\af_1 + \af_2 + \af_3)(x - iy)^2$ and $\af^*_4,\af^*_5,\af^*_6,\af^*_7$ as above. Therefore, we have two series of representatives of distinct orbits $\langle \nabla_2 + i \alpha \nabla_4 + \alpha \nabla_5 + i \nabla_6 + \nabla_7 \rangle_{\alpha \neq \pm 1}$ and $\langle  i \alpha \nabla_4 + \alpha \nabla_5 + i \nabla_6 + \nabla_7 \rangle_{\alpha \neq \pm 1}$.
    \item $\alpha_5 = \alpha_7$. Then taking $n=0$, $z = \frac{-x y \alpha_1 + y^2 \alpha_2 - x^2 \alpha_3 - i u x \alpha_7 +  u y \alpha_7}{(x + i y)\alpha_7}$ we get $\alpha_3^* = 0$ and
    \begin{align*}
    \alpha_1^* &= (x - i y) (x (\alpha_1- 2i\af_3) - y (i \alpha_1 + 2 \alpha_2)),\\
    \alpha_2^* &= (x^2 + y^2) (\alpha_2-\af_3),\\
    \alpha_4^* &= i (x + i y) (x^2 + y^2) \alpha_7,\\
    \alpha_5^* &= (x + i y) (x^2 + y^2) \alpha_7,\\
	\alpha_6^* &= i (x + i y) (x^2 + y^2) \alpha_7,\\
	\alpha_7^* &= (x + i y) (x^2 + y^2) \alpha_7.
\end{align*}
\begin{enumerate}
    \item $i\af_1 + \af_2 + \af_3\ne 0$, $\af_2 - \af_3\ne 0$ and $i\af_1 + 2\af_2\ne 0$. Then taking $y = \frac{(\af_1 - 2i\af_3)x}{i\af_1 + 2\af_2}$ we get the representative $\la\as 2\nb 2+i \nabla_4 + \nabla_5 + i \nabla_6 + \nabla_7\ra$, where
        \begin{align*}
            \as 2 &= \frac{(i\af_1 + 2\af_2)(\af_2 - \af_3)}{2\af_7 x(i\af_1 + \af_2 + \af_3)}. 
        \end{align*}
So, choosing the appropriate value of $x$, we get the representative $\langle \nabla_2 + i \nabla_4 + \nabla_5 + i \nabla_6 + \nabla_7\rangle.$
    \item $i\af_1 + \af_2 + \af_3 = 0$. Then we have 
         \begin{align*}
    \alpha_1^* &= i(x^2+y^2)(\alpha_2-\af_3),\\
    \alpha_2^* &= (x^2+y^2)(\alpha_2-\af_3),
\end{align*}
Therefore, we get two representatives $\langle i\nabla_1 + \nabla_2 + i \nabla_4 + \nabla_5 + i \nabla_6 + \nabla_7\rangle$ and $\langle i \nabla_4 + \nabla_5 + i \nabla_6 + \nabla_7\rangle.$
    \item $\alpha_2-\af_3 = 0$. Then $\alpha_1^* =  (x-iy)^2(\af_1 - 2i\af_3)$, and we have only one new representative $\langle \nabla_1 + i \nabla_4 + \nabla_5 + i \nabla_6 + \nabla_7\rangle.$
    \item $i\af_1 + 2\af_2=0$. Then
    $
        \alpha_1^* = x(x - i y) (\alpha_1- 2i\af_3),
    $
    so choosing $x=0$ we have two representatives $\langle \nabla_2 + i \nabla_4 + \nabla_5 + i \nabla_6 + \nabla_7\rangle$ and $\langle i \nabla_4 + \nabla_5 + i \nabla_6 + \nabla_7\rangle$ found above.
\end{enumerate}
\item $\alpha_5 = -\alpha_7$. Then taking $n=0$ and $z = \frac{x y \alpha_1 - y^2 \alpha_2 + x^2 \alpha_3 + i u x \alpha_7 -  u y \alpha_7}{(x + i y) \alpha_7}$ we get $\alpha_3^*=0$ and
     \begin{align*}
    \alpha_1^* &= (x^2 - y^2)\alpha_1  - 2 x y (\alpha_2+\af_3),\\
    \alpha_2^* &= (x^2 - y^2)(\alpha_2+\af_3) +2 x y \alpha_1,\\
    \alpha_4^* &= -i(x + i y) (x^2 + y^2) \alpha_7,\\
    \alpha_5^* &= -(x + i y) (x^2 + y^2) \alpha_7,\\
	\alpha_6^* &= i (x + i y) (x^2 + y^2) \alpha_7,\\
	\alpha_7^* &= (x + i y) (x^2 + y^2) \alpha_7.
\end{align*}
    \begin{enumerate}
        \item $(\alpha_1,\alpha_2+\af_3) \neq (0,0)$ and $\alpha_1^2 + (\alpha_2+\af_3)^2 \neq 0$. Then we may assume that $\af_1\ne 0$ and $\af_2+\af_3\ne 0$. In this case the equality $\af^*_1=0$ has two distinct roots $y_1=\mu_1x$ and $y_2=\mu_2x$, where at least one of $\mu_1,\mu_2$ is different from $\pm i$ (otherwise $\alpha_2+\af_3=0$). Let $\mu_1\ne\pm i$. Then choosing $y = \mu_1x$ we get $\alpha_1^* = 0$. Observe that in this case $x^2 + y^2\ne 0$ and $\af_1 = \frac{2(\af_2 + \af_3)\mu_1}{1 - \mu_1^2}$. Substituting this into $\af^*_2$, we obtain $\alpha_2^* = \frac{x^2(1 + \mu_1^2)^2(\af_2 + \af_3)}{1-\mu_1^2}\ne 0.$ Therefore, we have the representative $\langle \nabla_2 - i \nabla_4 - \nabla_5 + i \nabla_6 + \nabla_7\rangle.$
        \item $\alpha_2+\af_3 \neq 0$ and $\alpha_1 = \pm i (\alpha_2+\af_3)\ne 0$. Then we get 
        \begin{align*}
            \alpha_1^* &= \pm i (x \pm i y)^2 (\alpha_2+\af_3),\\
            \alpha_2^* &= (x \pm i y)^2 (\alpha_2+\af_3),
        \end{align*}
        so we obtain two representatives $\langle i\nabla_1 + \nabla_2 - i \nabla_4 - \nabla_5 + i \nabla_6 + \nabla_7\rangle$ and $\langle -i\nabla_1 + \nabla_2 - i \nabla_4 - \nabla_5 + i \nabla_6 + \nabla_7\rangle.$
        \item $(\alpha_1,\alpha_2+\af_3) = (0,0)$. Then we get the representative $\langle - i \nabla_4 - \nabla_5 + i \nabla_6 + \nabla_7\rangle.$
    \end{enumerate}
    %by choosing suitable $u$ and $z$ we have the family of representatives $\langle \alpha_1^{\star}\nabla_1 \pm \alpha_5^{\star}\nabla_4+\alpha_5^{\star} \nabla_5 +i\nabla_6+\nabla_7 \rangle.$

 %   Now, by one more action of a suitable automorphism we have 
    
%    \begin{enumerate}
%     \item if $\alpha_1^{\star}\neq 0,$ then we have the family of representatives $\langle \nabla_1 \pm \alpha\nabla_4+\alpha  \nabla_5 +i\nabla_6+\nabla_7 \rangle .$
%        \item if $\alpha_1^{\star}= 0,$ then we have the family of representatives $\langle \pm \alpha\nabla_4+\alpha  \nabla_5 +i\nabla_6+\nabla_7 \rangle .$

%   \end{enumerate}
    \end{enumerate}
    \item $\alpha_4 = -i\alpha_5$. Choosing $n=0$, we have 
     \begin{align*}
    \alpha_1^* &= x^2 \alpha_1 -  x (2 y (\alpha_2 + \alpha_3) + i z (\alpha_5 - \alpha_7) + u (\alpha_5 + \alpha_7)) - y (y \alpha_1 - i u (\alpha_5 - \alpha_7) + z (\alpha_5 + \alpha_7)),\\
    \alpha_2^* &= x^2 \alpha_2 - y (y \alpha_3 + u \alpha_5 - i z \alpha_7) + x (y \alpha_1 - i u \alpha_5 + z \alpha_7),\\
    \alpha_3^* &= (x - i y) z \alpha_5 + x (y \alpha_1 + x \alpha_3 + i u \alpha_7) - y (y \alpha_2 + u \alpha_7),\\
    \alpha_4^* &= -i(x - iy) (x^2 + y^2) \alpha_5,\\
    \alpha_5^* &= (x - i y) (x^2 + y^2) \alpha_5,\\
	\alpha_6^* &= i (x + i y) (x^2 + y^2) \alpha_7,\\
	\alpha_7^* &= (x + i y) (x^2 + y^2) \alpha_7.
\end{align*}
Consider $\alpha_2^* = 0, \alpha_3^* = 0$ as a linear system in $u, z.$ Its determinant is $-i((x - i y)^2 \alpha_5^2 + (x + i y)^2 \alpha_7^2).$ So, we may choose $x,y$ such that $x^2 + y^2 \neq 0, -i((x - i y)^2 \alpha_5^2 + (x + i y)^2 \alpha_7^2) \neq 0$ and $u$ and $z$ to make $\alpha_2^* = \alpha_3^*=0$. Observe that this does not change the conditions on $\af_4,\af_5,\af_6,\af_7$. So, we may assume that $\af_2=\af_3=0$ from the very beginning. 
%Then the choice of $u$ and $z$ as above gives
%\[
%\alpha_1^* = \frac{(x^2 + y^2)^2 \alpha_1 (\alpha_5^2 + \alpha_7^2)}{(x - i y)^2 \alpha_5^2 + (x + i y)^2 \alpha_7^2}.
%\]
We may also suppose that $\alpha_5 + \alpha_7 \neq 0.$         
\begin{enumerate}
    \item $\alpha_5 \neq 0$. Then taking $y = \frac{\alpha_5 - \alpha_7}{i(\alpha_5+\alpha_7)}x$ we may make $\alpha_5^* = \alpha_7^*$, so we shall assume $\af_5=\af_7$. Now, choosing $n=y = u=z=0,$ we get 
     \begin{align*}
    \alpha_1^* &= x^2 \alpha_1,\\
    \alpha_4^* &= -ix^3 \alpha_7,\\
    \alpha_5^* &= x^3 \alpha_7,\\
	\alpha_6^* &= i x^3 \alpha_7,\\
	\alpha_7^* &= x^3 \alpha_7.
\end{align*}
Therefore, we get two representatives $\langle \nabla_1 - i \nabla_4 + \nabla_5 + i\nabla_6 + \nabla_7 \rangle$ and $\langle - i \nabla_4 + \nabla_5 + i\nabla_6 + \nabla_7 \rangle.$
    \item $\alpha_5 = 0$. Then $\alpha_4 = 0$, so that $\af^*_4=\af^*_5=0$. Choosing $n=0$ and the appropriate values of $u$ and $z$ we have $\af^*_2=\af^*_3=0$ and
        \begin{align*}
    \alpha_1^* &= (x - i y)^2 \alpha_1,\\
	\alpha_6^* &= i (x + i y) (x^2 + y^2) \alpha_7,\\
	\alpha_7^* &= (x + i y) (x^2 + y^2) \alpha_7.
\end{align*}
Therefore, we get two representatives $\langle \nabla_1 + i\nabla_6 + \nabla_7 \rangle$ and $\langle i\nabla_6 + \nabla_7 \rangle,$ which will be joined with the series $\langle \nabla_1 + \alpha \nabla_4 + i \nabla_6 + \nabla_7 \rangle_{\alpha \neq 0}$ and $\langle \alpha \nabla_4 + i \nabla_6 + \nabla_7 \rangle_{\alpha \neq 0}.$ Note that by above representatives $\langle \nabla_1 + i\nabla_6 + \nabla_7 \rangle$ and $\langle \nabla_2 + i\nabla_6 + \nabla_7 \rangle$ define the same orbit.
\end{enumerate}
    
\end{enumerate}

\end{enumerate}

Summarizing, 
we have the following distinct orbits

\[ \langle \nabla_4\rangle,
\langle \nabla_3+ \nabla_4\rangle,
\langle \nabla_1+i\nabla_4+\nabla_5\rangle,
\langle i\nabla_4 +\nabla_5\rangle,\]

\[ \langle \alpha \nabla_4+\beta\nabla_5+\nabla_6 \rangle_{ \beta \in \mathbb C_{\geq 0}}, \langle \nabla_1+ \alpha \nabla_4+\beta\nabla_5+\nabla_6 \rangle_{\alpha\beta \neq 0, \beta \in \mathbb C_{\geq 0}},\]
\[  \langle  \nabla_3+\alpha\nabla_4+\nabla_6 \rangle_{\alpha \neq 0}, \langle  \nabla_1 +\alpha\nabla_3 - \nabla_4 +\nabla_6 \rangle_{\alpha \in \mathbb C_{\geq 0}}, \langle \nabla_1 + \alpha \nabla_2 + i \nabla_5 + \nabla_6 \rangle, \langle  \nabla_2 +\alpha\nabla_5 +\nabla_6 \rangle_{\alpha \in \mathbb C_{\geq 0}}\]

\[\langle  \nabla_1+\alpha\nabla_4+i\nabla_6 + \nabla_7 \rangle, \langle  \alpha\nabla_4+i\nabla_6 + \nabla_7 \rangle,\]
\[\langle  \nabla_2+i\alpha\nabla_4 + \alpha\nabla_5+i\nabla_6 + \nabla_7 \rangle_{\alpha \neq 0}, \langle  i\alpha\nabla_4 + \alpha\nabla_5+i\nabla_6 + \nabla_7 \rangle_{\alpha \neq 0},\]
\[\langle i \nabla_1 + \nabla_2 + i \nabla_4 + \nabla_5 + i\nabla_6 + \nabla_7\rangle,  \langle \pm i \nabla_1 + \nabla_2 - i \nabla_4 - \nabla_5 + i\nabla_6 + \nabla_7\rangle\]

The corresponding algebras are

$$    \begin{array}{lllllllllll}
        \T {4}{18}&:& e_1 e_1 = e_3, & e_1e_3=e_4, & e_2 e_2=e_3; \\
        \T {4}{19}&:& e_1 e_1 = e_3, & e_1e_3=e_4,& e_2e_1=e_4, & e_2 e_2=e_3; \\
        \T {4}{20}&:& e_1 e_1 = e_3+e_4, & e_1e_3=ie_4, & e_2 e_2=e_3, & e_2e_3=e_4; \\
        \T {4}{21}&:& e_1 e_1 = e_3, & e_1e_3=ie_4, & e_2 e_2=e_3,& e_2e_3=e_4; \\
        \T {4}{22}(\alpha,\beta)_{\beta\in \mathbb C_{\geq0}}&:& e_1 e_1 = e_3, & e_1e_3=\alpha e_4,&  e_2 e_2=e_3, &e_2e_3=\beta e_4,& e_3e_1=e_4; \\
        \T {4}{23}(\alpha,\beta)_{%\alpha\beta \neq 0,
        \beta \in \mathbb C_{\geq0}}&:& e_1 e_1 = e_3+e_4, & e_1e_3=\alpha e_4,&  e_2 e_2=e_3, &e_2e_3=\beta e_4,& e_3e_1=e_4; \\
        \T {4}{24}(\alpha)%_{\alpha \neq 0}
        &:& e_1 e_1 = e_3,& e_1e_3 = \alpha e_4, & e_2e_1 = e_4, & e_2e_2 = e_3, & e_3e_1 = e_4; \\
        \T {4}{25}(\alpha)_{\alpha \in \mathbb C_{\geq0}}&:& e_1 e_1 = e_3+e_4, & e_1e_3=- e_4,& e_2e_1 = \alpha e_4, & e_2e_2 = e_3, & e_3e_1 = e_4; \\
        \T {4}{26}(\alpha)&:& e_1 e_1 = e_3+e_4, & e_1e_2=\alpha e_4, & e_2 e_2=e_3, &e_2e_3=i e_4,& e_3e_1=e_4; \\
        \T {4}{27}(\alpha)_{\alpha \in \mathbb C_{\geq0}}&:& e_1 e_1 = e_3, & e_1e_2= e_4, & e_2 e_2=e_3,& e_2e_3=\alpha e_4,& e_3e_1=e_4; \\
        \T {4}{28}(\alpha)&:& e_1 e_1 = e_3 + e_4, & e_1e_3=\alpha e_4, & e_2 e_2=e_3,& e_3e_1=ie_4,& e_3e_2=e_4; \\
        \T {4}{29}(\alpha)&:& e_1 e_1 = e_3, & e_1e_3=\alpha e_4, & e_2 e_2=e_3,& e_3e_1=ie_4,& e_3e_2=e_4; \\
        \T {4}{30}(\alpha)%_{\alpha \neq 0}
        &:& e_1 e_1 = e_3, & e_1 e_2 = e_4, & e_1e_3=i\alpha e_4, & e_2 e_2=e_3,& e_2e_3=\alpha e_4, & e_3e_1=ie_4,& e_3e_2=e_4; \\
        \T {4}{31}(\alpha)%_{\alpha \neq 0}
        &:& e_1 e_1 = e_3, & e_1e_3=i\alpha e_4, & e_2 e_2=e_3,& e_2e_3=\alpha e_4, & e_3e_1=ie_4,& e_3e_2=e_4; \\
        \T {4}{32}&:& e_1 e_1 = e_3 + ie_4, & e_1 e_2 = e_4, & e_1e_3=i e_4, & e_2 e_2=e_3,& e_2e_3= e_4, & e_3e_1=ie_4,& e_3e_2=e_4; \\
        \T {4}{33}&:& e_1 e_1 = e_3 + ie_4, & e_1 e_2 = e_4, & e_1e_3= -ie_4, & e_2 e_2=e_3,& e_2e_3= -e_4, & e_3e_1=ie_4,& e_3e_2=e_4; \\
        \T {4}{34}&:& e_1 e_1 = e_3 -ie_4, & e_1 e_2 = e_4, & e_1e_3= -ie_4, & e_2 e_2=e_3,& e_2e_3= -e_4, & e_3e_1=ie_4,& e_3e_2=e_4.
    \end{array}$$

The algebras above are pairwise non-isomorphic, except $\T {4}{23}(\alpha,0) \cong \T {4}{22}(\alpha,0)$ for $\alpha \neq -1,$ $\T {4}{23}(-1,0) \cong \T {4}{25}(0),$ $\T {4}{23}(0,\beta) \cong \T {4}{22}(0,\beta)$ for $\beta \neq i,$ $\T {4}{23}(0,i) \cong \T {4}{26}(0), \T {4}{24}(0) \cong \T {4}{22}(0,0), \T {4}{30}(0) \cong \T {4}{28}(0), \T {4}{31}(0) \cong \T {4}{29}(0).$ 

\newpage 
\subsubsection{$1$-dimensional  central extensions of $\T {3*}{03}$}
Let us use the following notations 
\begin{align*}
    \nb 1 = \Dl 11, \nb 2 = \Dl 12, \nb 3 = \Dl 13 - \Dl 31, \nb 4 = \Dl 22, \nb 5 = \Dl 23 - \Dl 32, \nb 6 = \Dl 31, \nb 7 = \Dl 32.    
\end{align*}
Take $\0=\sum_{i=1}^7\af_i\nb i\in {\rm H_T^2}(\T {3*}{03}).$
If 
$$
\phi=
\begin{pmatrix}
x &    y  &  0\\
z &    u  &  0\\
v &    w  &  xu-yz
\end{pmatrix}\in\aut{\T {3*}{03}},
$$
then
$$
\phi^T\begin{pmatrix}
\af_1& \af_2& \af_3\\
 0& \af_4& \af_5\\
\af_6-\af_3& \af_7-\af_5& 0
\end{pmatrix} \phi=
\begin{pmatrix}
\af_1^*& \af_2^*-\af^*& \af_3^*\\
\af^*& \af_4^*& \af_5^*\\
\af_6^*-\af_3^*& \af_7^*-\af_5^*& \af^{**}
\end{pmatrix},
$$
where
\begin{align*}
\af^*_1 &= \af_1x^2 + \af_2xz + \af_4z^2 + v(\af_6x + \af_7z),\\
\af^*_2 &= x(2\af_1y + \af_2u) + z(\af_2y + 2\af_4u) + w(\af_6x +\af_7z) + v(\af_6y + \af_7u),\\
\af^*_3 &= (\af_3x+\af_5z)(xu - yz),\\
\af^*_4 &= y(\af_1y + \af_2u) + \af_4u^2 + w(\af_6y + \af_7u),\\
\af^*_5 &= (\af_3y+\af_5u)(xu - yz),\\
\af^*_6 &= (\af_6x+\af_7z)(xu - yz),\\
\af^*_7 &= (\af_6y+\af_7u)(xu - yz).
\end{align*}
Hence, $\phi\langle\0\rangle=\langle\0^*\rangle,$ where $\0^*=\sum\limits_{i=1}^7 \af_i^*  \nb i.$

 We are interested in $\0$ with $(\af_6,\af_7) \neq (0,0).$ Moreover, the condition $\theta \in \mathbf{T}_1 (\T{3*}{03})$ gives us $(\af_3,\af_5, \af_6 -\af_3, \af_7 - \af_5) \neq (0,0,0,0).$

    If $\af_7\neq 0$, then taking $u=-\frac{\af_6y}{\af_7}$ we have $\af^*_7=0$, so we shall assume that $\af^*_6\ne 0$ and $\af^*_7=0$ from the very beginning. Choosing $y=0$, $z=-\frac{\af_3x}{\af_5}$, $u= \frac{\af_6x}{\af_5}$, $v = \frac x{\af_5^2\af_6}(\af_2\af_3\af_5-\af_3^2\af_4 - \af_1\af_5^2)$, $w = \frac x{\af_5^2}(2\af_3\af_4 - \af_2\af_5)$,  we get  the family of representatives 
     $ \langle  \frac{\af_4}{\af_5 x}\nabla_4 +  \nabla_5+  \nabla_6\rangle$. It gives two distinct representatives $ \langle  \nabla_5+  \nabla_6\rangle$ and $ \langle  \nabla_4 +  \nabla_5+  \nabla_6\rangle$ depending on whether $\af_4=0$ or not.
   
   The algebras corresponding to $ \langle  \nabla_4 +  \nabla_5+  \nabla_6\rangle$ and $ \langle  \nabla_5+  \nabla_6\rangle$ are:

$$   
\begin{array}{lllllllll}
%\T {4}{30}(\alpha)&:&  e_1 e_2=e_3, & e_1e_3=\af e_4,& e_2 e_1=-e_3,&  e_2e_2=e_4, & e_3e_1=(1-\af) e_4 \\
%\T {4}{31}(\alpha)&:&  e_1 e_2=e_3, & e_1e_3=\af e_4,& e_2 e_1=-e_3,&     e_3e_1=(1-\af) e_4 \\
\T {4}{35}        &:&  e_1 e_2=e_3, & e_2 e_1=-e_3,& e_2e_2=e_4,& e_2e_3=e_4,& e_3e_1=e_4,&  e_3e_2=-e_4;\\
%\T {4}{33}        &:&  e_1 e_2=e_3, & e_2 e_1=-e_3,& e_2e_2=e_4,&  e_3e_1=e_4\\
\T {4}{36}        &:&  e_1 e_2=e_3, & e_2 e_1=-e_3,&  e_2e_3=e_4,& e_3e_1=e_4,&  e_3e_2=-e_4.
 \end{array} 
 $$

 %\newpage

\subsubsection{$1$-dimensional  central extensions of $\T {3*}{04}$}
Let us use the following notations 
\begin{align*}
    \nb 1 = \Dl 11, \nb 2 = \Dl 12, \nb 3 = \Dl 13, \nb 4 = \Dl 21,
	\nb 5 = \Dl 23, \nb 6 = \Dl 31, \nb 7 = \Dl 32.    
\end{align*}
Take $\0=\sum_{i=1}^7\af_i\nb i\in {\rm H_T^2}(\T {3*}{04}).$
If 
$$
\phi=
 \begin{pmatrix}
    x &      y    &  0\\
    -\lambda y    &    x-y   &  0\\
    z &      u      &  x^2-xy+\lambda y^2
    \end{pmatrix}\in\aut{\T 3{04}},
$$
then
$$
\phi^T\begin{pmatrix}
	\af_1            &  \af_2        & \af_3\\
	\af_4            &      0        & \af_5\\
	\af_6            &  \af_7        & 0
	\end{pmatrix} \phi=
	\begin{pmatrix}
	\af_1^*+\lb\af^*      &  \af_2^*  & \af_3^*\\
	\af_4^*+\af^*         &  \af^*    & \af_5^*\\
	\af_6^*               &  \af_7^*  & 0
	\end{pmatrix},
$$
where
\begin{align*}
	\af^*_1 &= \af_1x^2 + \lb(-\af_1 + \af_2 + \af_4) y^2  -2\lb(\af_2 + \af_4) xy + (\af_3 + \af_6)xz - (\af_5 + \af_7) yz\\
	&\quad- \lb(\af_5 + \af_7) ux + \lb(- \af_3 + \af_5 - \af_6 + \af_7) uy,\\
    \af^*_2 &= \af_2x^2 - \lb\af_4 y^2 + (\af_1 - \af_2)xy + \af_7xz + (\af_6 - \af_7)yz -\af_5\lb uy  + \af_3ux,\\
    \af^*_3 &= (x^2 - xy + \lb y^2)(\af_3x -\lb\af_5 y),\\
    \af^*_4 &= \af_4x^2 + (- \af_1 + (1-\lb)\af_2 + \af_4)y^2 + (\af_1 - \af_2 - 2\af_4)xy + (\af_3 - \af_5)yz + \af_5xz\\
    &\quad - \af_7\lb uy + (- \af_5 + \af_6 - \af_7)ux  + (- \af_3 + \af_5 - \af_6 + \af_7)uy,\\
    \af^*_5 &= (x^2 - xy + \lb y^2)(\af_5x + (\af_3 - \af_5)y),\\
    \af^*_6 &= (x^2 - xy + \lb y^2)(\af_6x -\lb\af_7 y),\\
    \af^*_7 &= (x^2 - xy + \lb y^2)(\af_7x + (\af_6-\af_7)y).
	\end{align*}
Hence, $\phi\langle\0\rangle=\langle\0^*\rangle,$ where $\0^*=\sum\limits_{i=1}^7 \af_i^*  \nb i.$

 We are interested in $\0$ with $(\af_3,\af_5,\af_6,\af_7) \neq (0,0,0,0)$ (if $\lambda \neq 0$) and $(\af_3,\af_6,\af_7) \neq (0,0,0)$ (if $\lambda = 0$). Moreover, the condition $\theta \in \mathbf{T}_1 (\T{3*}{04})$ gives us $(\af_4,\af_5,\af_6,\af_7) \neq (0,0,0,0).$
 
    \begin{enumerate}
    
        \item $\af_7\ne 0$. We have the following subcases.
	\begin{enumerate}
	    \item $\af_6^2 - \af_6\af_7 + \lb\af_7^2\ne 0$. Choosing $x=-\frac{(\af_6-\af_7)y}{\af_7}$, we have $\af^*_7=0$. Since $(\af^*_6)^2 - \af^*_6\af^*_7 + \lb(\af^*_7)^2=(\af_6^2 - \af_6\af_7 + \lb\af_7^2)(x^2 - xy + \lb y^2)^3$, the condition $\af_6^2 - \af_6\af_7 + \lb\af_7^2\ne 0$ is invariant under the action of the automorphism group. Thus, we may assume that $\af_7=0$ and $\af_6\ne 0$ from the very beginning. Then choosing $y=0$ we obtain $\af^*_7=0$ and
        \begin{align*}
	    \af^*_1 &= x(\af_1x  + (\af_3 + \af_6)z - \lb\af_5u),\\
        \af^*_2 &= x(\af_2x  + \af_3u),\\
        \af^*_3 &= \af_3 x^3,\\
        \af^*_4 &= x(\af_4x + \af_5z + (- \af_5 + \af_6)u),\\
        \af^*_5 &= \af_5 x^3,\\
        \af^*_6 &= \af_6 x^3.
	    \end{align*}
	    \begin{enumerate}
	        \item $\af_5\ne 0$ and $\af_3\ne 0$. Then we choose $u=-\frac{\af_2x}{\af_3}$ and $z=-\frac{\af_4x+(- \af_5 + \af_6)u}{\af_5}$ and obtain the family of representatives $\la\af\nb 1+\bt\nb 3+\gm\nb 5+\nb 6\ra_{\bt,\gm\ne 0}$. It gives two families of representatives of distinct orbits: $\la\nb 1+\af\nb 3+\bt\nb 5+\nb 6\ra_{\af,\bt\ne 0}$ and $\la\af\nb 3+\bt\nb 5+\nb 6\ra_{\af,\bt\ne 0}$.
	        
	        \item $\af_5\ne 0$ and $\af_3=0$. Then $\af^*_3=0$ and $\af^*_4$, $\af^*_5$, $\af^*_6$ are as above and
	        \begin{align*}
	            \af^*_1 &= x(\af_1x  + \af_6z - \lb\af_5u),\\
                \af^*_2 &= \af_2x^2.
	        \end{align*}
	        Choosing $z=\frac{\lb\af_5u-\af_1x}{\af_6}$, we have $\af^*_1=0$. Now we have the following subcases:
	        \begin{enumerate}
	            \item $\af_6^2 - \af_5\af_6 + \lb\af_5^2\ne 0$. Then choosing $u=\frac{(\af_1\af_5 - \af_4\af_6)x}{\af_6^2 - \af_5\af_6 + \lb\af_5^2}$ we have $\af^*_4=0$, so we obtain the family of representatives $\la\af\nb 2+\bt\nb 5+\nb 6\ra_{1-\bt+\lb\bt^2\ne 0}$. It determines two families of representatives of distinct orbits: $\la\nb 2+\af\nb 5+\nb 6\ra_{\alpha \neq 0, 1-\af+\lb\af^2\ne 0}$ and $\la\af\nb 5+\nb 6\ra_{\alpha \neq 0, 1-\af+\lb\af^2\ne 0}$.
	            \item $\af_6^2 - \af_5\af_6 + \lb\af_5^2=0$. 
	            
	            If $\lb\not\in\{0,\frac 14\}$, then we have two families of representatives of distinct orbits %$\La\nb 2+\af\nb 4+\frac{1\pm\sqrt{1-4\lb}}{2\lb}\nb 5+\nb 6\Ra$ 
	            $\La \lambda\nb 2+\lambda\af\nb 4+\frac{1+\sqrt{1-4\lb}}{2}\nb 5+\lambda\nb 6\Ra, \La\nb 2+\af\nb 4+\frac{2}{1+\sqrt{1-4\lb}}\nb 5+\nb 6\Ra$ and 4 separate representatives %$\La\nb 4+\frac{1\pm\sqrt{1-4\lb}}{2\lb}\nb 5+\nb 6\Ra$, 
	           $\La\lambda \nb 4+\frac{1+\sqrt{1-4\lb}}{2}\nb 5+\lambda \nb 6\Ra, \La\nb 4+\frac{2}{1+\sqrt{1-4\lb}}\nb 5+\nb 6\Ra,$ $\La\frac{1\pm\sqrt{1-4\lb}}{2\lb}\nb 5+\nb 6\Ra,$ the last two belonging to the family above.
	            
	            If $\lb=\frac 14$, then we have the family of representatives of distinct orbits $\la\nb 2+\af\nb 4+2\nb 5+\nb 6\ra$ and two separate representatives $\la\nb 4+2\nb 5+\nb 6\ra$ and $\la 2\nb 5+\nb 6\ra$. 
	            
	            If $\lb=0$, then we have $\af_5=\af_6$, so we obtain the family of representatives of distinct orbits $\la\nb 2+\af\nb 4+\nb 5+\nb 6\ra$ and two separate representatives $\la\nb 4+\nb 5+\nb 6\ra$ and $\la\nb 5+\nb 6\ra$.
	        
	        \end{enumerate}
	        
	        \item $\af_3\ne 0$ and $\af_5=0$. Then $\af^*_5=0$, $\af^*_2$, $\af^*_3$ and $\af^*_6$ are as above and
	        \begin{align*}
	            \af^*_1 &= x(\af_1x  + (\af_3 + \af_6)z),\\
                \af^*_4 &= x(\af_4x + \af_6u).
	        \end{align*}
	        Choosing $u=-\frac{\af_4x}{\af_6}$, we get $\af^*_4=0$. Now we have two subcases:
	        \begin{enumerate}
	            \item $\af_3 + \af_6\ne 0$. Then choosing $z=-\frac{\af_1x}{\af_3 + \af_6}$, we obtain two families of representatives of distinct orbits $\la \nb 2+\af\nb 3+\nb 6\ra_{\af\not\in \{0,-1\}}$ and $\la \af\nb 3+\nb 6\ra_{\af\not\in \{0,-1\}}$.
	            \item $\af_3 + \af_6=0$. Then choosing $z=0$, we have the family of representatives of distinct orbits $\la\nb 1+\af\nb 2-\nb 3+\nb 6\ra$ and two separate representatives $\la\nb 2-\nb 3+\nb 6\ra$ and $\la-\nb 3+\nb 6\ra$ which will be joined with the families $\la \nb 2+\af\nb 3+\nb 6\ra_{\af\not\in \{0,-1\}}$ and $\la \af\nb 3+\nb 6\ra_{\af\not\in \{0,-1\}}$ found above.
	        \end{enumerate}
	        
	        \item $\af_3=\af_5=0$. Then $\af^*_3=\af^*_5=0$, $\af^*_6$ is as above and 
	        \begin{align*}
	            \af^*_1 &= x(\af_1x + \af_6z),\\
                \af^*_2 &= \af_2x^2,\\
                \af^*_4 &= x(\af_4x + \af_6u).
	        \end{align*}
	        Thus, choosing $z=-\frac{\af_1x}{\af_6}$ and $u=-\frac{\af_4x}{\af_6}$, we have two representatives depending on whether $\af_2=0$ or not: $\la\nb 2+\nb 6\ra$ and $\la\nb 6\ra$. They will be joined with the families $\la\nb 2+\af\nb 5+\nb 6\ra_{\alpha \neq 0, 1-\af+\lb\af^2\ne 0}$ and $\la\af\nb 5+\nb 6\ra_{\alpha \neq 0, 1-\af+\lb\af^2\ne 0}$ found above.
	    \end{enumerate}
	    
	    \item $\af_6^2 - \af_6\af_7 + \lb\af_7^2=0$ and $\af_5\ne 0$ and $\af_3^2 - \af_3\af_5 + \lb\af_5^2\ne 0$. Then choosing $x=-\frac{(\af_3 - \af_5)y}{\af_5}$, we get $\af^*_5=0$. Since $(\af^*_3)^2 - \af^*_3\af^*_5 + \lb(\af^*_5)^2=(\af_3^2 - \af_3\af_5 + \lb\af_5^2)(x^2-xy+\lb y^2)^3\ne 0$, we may assume that $\af_5=0$ and $\af_3\ne 0$ from the very beginning. Then choosing $y=0$ and $z=-\frac{\af_2x+ \af_3u}{\af_7}$ we have $\af^*_2=\af^*_5=0$ and 
	    \begin{align*}
	        \af^*_1 &= \frac x{\af_7}((\af_1\af_7- \af_2\af_3 - \af_2\af_6)x - (\af_3^2  + \af_3\af_6 - \af_6^2 + \af_6\af_7)u),\\
            \af^*_3 &= \af_3x^3,\\
            \af^*_4 &= x(\af_4x - (\af_6 - \af_7)u),\\
            \af^*_6 &= \af_6x^3,\\
            \af^*_7 &= \af_7x^3.
	    \end{align*}
	    \begin{enumerate}
	        \item $\af_6 - \af_7\ne 0$. Then choosing $u=\frac{\af_4x}{\af_6 - \af_7}$ we have $\af^*_4=0$.
	        
	        If $\lb\not\in\{0,\frac 14\}$, then we have four families of representatives of distinct orbits $\La \nb 1+\af\nb 3+ \frac{1\pm\sqrt{1-4\lb}}{2}\nb 6+\nb 7\Ra_{\af\ne 0}$, $\La \af\nb 3+ \frac{1\pm\sqrt{1-4\lb}}{2}\nb 6+\nb 7\Ra_{\af\ne 0}$.
	        
	       	If $\lb=\frac 14$, then $\af_6^2 - \af_6\af_7 + \lb\af_7^2=0$ implies $\af_6=\frac 12\af_7$, so $\af_6 - \af_7\ne 0$ is satisfied. Thus, we have two families of representatives of distinct orbits $\la\nb 1+\af\nb 3+\frac 12\nb 6+\nb 7\ra_{\af\ne 0}$ and $\la\af\nb 3+\frac 12\nb 6+\nb 7\ra_{\af\ne 0}$.
	        	        
	        If $\lb = 0$, then $\af_6^2 - \af_6\af_7 + \lb\af_7^2=0$ and $\af_6 - \af_7\ne 0$ imply that $\af_6=0$. Hence, we have two families of representatives of distinct orbits $\la\nb 1+\af\nb 3+\nb 7\ra_{\af\ne 0}$ and $\la\af\nb 3+\nb 7\ra_{\af\ne 0}$.
	        
	        \item $\af_6 - \af_7=0$. Then $\af_6^2 - \af_6\af_7 + \lb\af_7^2=0$ and $\af_7\ne 0$ imply that $\lb = 0$. Now,
	        \begin{align*}
	        \af^*_1 &= \frac x{\af_7}((\af_1\af_7- \af_2\af_3 - \af_2\af_7)x - \af_3(\af_3  + \af_7)u),\\
            \af^*_3 &= \af_3x^3,\\
            \af^*_4 &= \af_4x^2,\\
            \af^*_6 &= \af_7x^3,\\
            \af^*_7 &= \af_7x^3.
	    \end{align*}
	        \begin{enumerate}
	            \item $\af_3  + \af_7\ne 0$. Then choosing $u=\frac{(\af_1\af_7- \af_2\af_3 - \af_2\af_7)x}{\af_3(\af_3  + \af_7)}$, we have $\af^*_1=0$. Thus, we obtain two families of representatives of distinct orbits $\la\af\nb 3+\nb 4+\nb 6+\nb 7\ra_{\af\ne 0,-1}$ and $\la\af\nb 3+\nb 6+\nb 7\ra_{\af\ne 0,-1}$.
	            \item $\af_3  + \af_7=0$. Then we obtain the family of representatives of distinct orbits $\la\nb 1-\nb 3+\af\nb 4+\nb 6+\nb 7\ra$ and two separate representatives $\la-\nb 3+\nb 4+\nb 6+\nb 7\ra$ and  $\la-\nb 3+\nb 6+\nb 7\ra,$ which will be joined with the families $\la\af\nb 3+\nb 4+\nb 6+\nb 7\ra_{\af\ne 0,-1}$ and $\la\af\nb 3+\nb 6+\nb 7\ra_{\af\ne 0,-1}$ found above.
	        \end{enumerate}
	    \end{enumerate}
	    \item $\af_6^2 - \af_6\af_7 + \lb\af_7^2=0$ and $\af_5\ne 0$ and $\af_3^2 - \af_3\af_5 + \lb\af_5^2=0$. Choosing $x,y$ such that $(x^2 - xy + \lb y^2)(\af_7x + (\af_6-\af_7)y)=1$, we have $(\af_3 - \af_5)y + \af_5x\ne 0$, since otherwise $x^2 - xy + \lb y^2=(\af_3^2 - \af_3\af_5 + \lb\af_5^2)\frac{y^2}{\af_5^2}=0$. Now, the suitable value of $z$ gives $\af^*_4=0$, so we shall assume $\af_7=1$ and $\af_4=0$ from the very beginning. The equality $\af_6^2 - \af_6\af_7 + \lb\af_7^2=0$ takes the form $\af_6^2 - \af_6 + \lb=0$, whence $\lb=\af_6-\af_6^2$. On the other hand, $\af_3^2 - \af_3\af_5 + \lb\af_5^2=0$ implies $\lb=\frac{\af_3\af_5-\af_3^2}{\af_5^2}$. Therefore, $\af_6-\af_6^2=\frac{\af_3\af_5-\af_3^2}{\af_5^2}$, i.e. $\af_3^2-\af_3\af_5+\af_5^2(\af_6-\af_6^2)=0$. This equation has two solutions in $\af_3$, namely, $\af_3=\af_5\af_6$ and $\af_3=\af_5(1-\af_6)$.
	    
	    \begin{enumerate}
	    	\item $\af_3=\af_5\af_6$. Then
	    	\begin{align*}
	    		\af^*_2 &= (x + (\af_6-1)y)z + \af_5\af_6(x + (\af_6-1)y)u  + \af_2x^2 + \af_1xy - \af_2xy,\\
	    		\af^*_4 &= \af_5(x + (\af_6-1)y)z + ((\af_6 - \af_5 - 1)x + (\af_6^2  - \af_5\af_6  + \af_5 - 2\af_6 + 1)y)u\\
	    		&\quad + (\af_1-\af_2)xy  + (\af_2 + \af_2\af_6^2 - \af_1 - \af_2\af_6)y^2. 
	    	\end{align*}
	    	We consider $\af^*_2=\af^*_4=0$ as a system of linear equations in $z$ and $u$. Its determinant is $(\af_5 + 1)(\af_6 - \af_5\af_6 - 1)( x + (\af_6-1)y)^2$. So, we have the following cases:
	    	\begin{enumerate}
	    		\item $\af_5 + 1\ne 0$ and $\af_6 - \af_5\af_6 - 1\ne 0$. Then choosing $x,y$ such that $x + (\af_6-1)y\ne 0$, $x - \af_6y\ne 0$ and $z,u$ such that $\af^*_2=\af^*_4=0$ we obtain 
	    		\begin{align*}
	    			\af^*_1 &= (\af_1 - \af_2\af_6)(x - \af_6y)^2,\\
	    			\af^*_3 &= \af_5\af_6(x - \af_6y)(x + (\af_6-1)y)^2,\\
	    			\af^*_5 &= \af_5(x - \af_6y)(x + (\af_6-1)y)^2,\\
	    			\af^*_6 &= \af_6(x - \af_6y)(x + (\af_6-1)y)^2,\\
	    			\af^*_7 &= (x - \af_6y)(x + (\af_6-1)y)^2.
	    		\end{align*}
	    		\begin{itemize}
	    		    \item Let $\af_1 - \af_2\af_6\ne 0$. Taking $y=0$ and $x=\af_1 - \af_2\af_6$ we obtain the family of representatives $\la\nb 1+\af\bt\nb 3+\af\nb 5+\bt\nb 6+\nb 7\ra$, where $\af\not\in \{0,-1\}$, $\bt-\af\bt-1\ne 0$ and $\bt^2 - \bt + \lb=0$.
	    		
	    		If $\lb\not\in\{0,\frac 14\}$, then $\bt=\frac{1\pm\sqrt{1-4\lb}}2$. Observe that $\bt\ne 0$, so the condition $\bt-\af\bt-1\ne 0$ is equivalent to $\af\ne 1-\frac 1\bt=\frac{\sqrt{1-4\lb}\mp 1}{\sqrt{1-4\lb}\pm 1}$. Thus, we obtain two families $\La\nb 1+\frac{1\pm\sqrt{1-4\lb}}2\af\nb 3+\af\nb 5+\frac{1\pm\sqrt{1-4\lb}}2\nb 6+\nb 7\Ra_{\af\not\in\left\{0,-1,\frac{\sqrt{1-4\lb}\mp 1}{\sqrt{1-4\lb}\pm 1}\right\}}$ of representatives of distinct orbits.
	    		
	    		If $\lb=\frac 14$, then $\bt=\frac 12$, and $\af\ne 1-\frac 1\bt$ becomes $\af\ne -1$. Thus, we obtain the family $\la\nb 1+\frac 12\af\nb 3+\af\nb 5+\frac 12\nb 6+\nb 7\ra_{\af\not\in\{0,-1\}}$ of representatives of distinct orbits.
	    		
	    		If $\lb=0$, then $\bt=0$ or $\bt=1$. If $\bt=0$, then the condition $\bt-\af\bt-1\ne 0$ becomes $-1\ne 0$; and if $\bt=1$, then it is $\af\ne 0$. Thus, we obtain two families $\la\nb 1+\af\nb 5+\nb 7\ra_{\af\not\in\{0,-1\}}$ and $\la\nb 1+\af\nb 3+\af\nb 5+\nb 6+\nb 7\ra_{\af\not\in\{0,-1\}}$ of representatives of distinct orbits.
	    		    \item Let  $\af_1 - \af_2\af_6=0$. Then we obtain the same families as in the previous case, but without $\nb 1$.
	    		\end{itemize}
	   
	    		\item $\af_6 - \af_5\af_6 - 1=0$. Then clearly $\af_6\ne 0$ and $\af_5=1-\frac 1{\af_6}$, so $\af_3=\af_6-1$. Moreover, $\af_6\ne 1$ since $\af_5\ne 0$. In particular, $\lb\ne 0$. Choosing $z$ such that $\af^*_4=0$, we obtain
	    		\begin{align*}
	    			\af^*_1 &= \frac{x - \af_6y}{\af_6 - 1}(\af_1(\af_6-1)x + \af_6(\af_2(2\af_6^2 + 1) - \af_6(\af_1 + 2\af_2))y),\\
	    			\af^*_2 &= \frac{x - \af_6y}{\af_6 - 1}(\af_2(\af_6-1)x  + (\af_2\af_6(\af_6-1) - \af_1 + \af_2)y),\\
	    			\af^*_3 &= (\af_6 - 1)(x - \af_6y)(x + (\af_6-1)y)^2,\\
	    			\af^*_5 &= \left(1 - \frac 1{\af_6}\right)(x - \af_6y)(x + (\af_6-1)y)^2,\\
	    			\af^*_6 &= \af_6(x - \af_6y)(x + (\af_6-1)y)^2,\\
	    			\af^*_7 &= (x - \af_6y)(x + (\af_6-1)y)^2.
	    		\end{align*}
	    		Let $(\alpha_1,\af_2) =(0,0)$. Then we get the representative $\La(\af-1)\nb 3+\left(1-\frac 1{\af}\right)\nb 5+\af\nb 6+\nb 7\Ra$, where $\af^2 - \af + \lb=0$.
				
				If $\lb\not\in\{0,\frac 14\}$, then we obtain two representatives $\La\frac{-1\pm\sqrt{1-4\lb}}2\nb 3+\frac{\sqrt{1-4\lb}\mp 1}{\sqrt{1-4\lb}\pm 1}\nb 5+\frac{1\pm\sqrt{1-4\lb}}2\nb 6+\nb 7 \Ra$ which will be joined with the families $\La\frac{1\pm\sqrt{1-4\lb}}2\af\nb 3+\af\nb 5+\frac{1\pm\sqrt{1-4\lb}}2\nb 6+\nb 7\Ra_{\af\not\in\left\{0,-1,\frac{\sqrt{1-4\lb}\mp 1}{\sqrt{1-4\lb}\pm 1}\right\}}$.
				
				If $\lb=\frac 14$, then we obtain the representative $\la -\frac 12\nb 3-\nb 5+\frac 12\nb 6+\nb 7\ra$ which will be joined with the family $\la\frac 12\af\nb 3+\af\nb 5+\frac 12\nb 6+\nb 7\ra_{\af\not\in\{0,-1\}}.$

				Let $(\alpha_1,\alpha_2) \neq (0,0).$ If $\alpha_2 =0,$ then  
				\begin{align*}
				\af^*_1 &= \frac{x - \af_6y}{\af_6 - 1}((\af_6-1)x - \af_6^2y)\af_1,\\
				\af^*_2 &= -\frac{x - \af_6y}{\af_6 - 1}y\af_1,
				\end{align*}
				so choosing $y \neq 0$ we may suppose for the rest of the case that $\alpha_2 \neq 0.$
\begin{itemize}
    \item Let $\af_1-\af_2\af_6\ne 0$ and $\af_1 - \af_2 - 2\af_2\af_6^2 + 2\af_2\af_6\ne 0$. Then choosing $z=\frac{\af_6(\af_2\af_6(\af_6-1) - \af_1 + \af_2)y}{\af_6 - 1}-(\af_6-1)u$ and $x=\frac{(\af_1 - \af_2 -\af_2\af_6(\af_6-1))y}{\af_2(\af_6 - 1)}$ we obtain $\af^*_2=\af^*_4=0$ and 
	    		\begin{align*}
	    			\af^*_1 &= (\af_1 - \af_2 - 2\af_2\af_6^2 + 2\af_2\af_6)^2(\af_1 - \af_2\af_6)\frac{y^2}{\af_2^2(\af_6 - 1)^2},\\
	    			\af^*_3 &= (\af_1 - \af_2 - 2\af_2\af_6^2 + 2\af_2\af_6)(\af_1 - \af_2\af_6)^2\frac{y^3}{\af_2^3(\af_6 - 1)^2},\\
	    			\af^*_5 &= (\af_1 - \af_2 - 2\af_2\af_6^2 + 2\af_2\af_6)(\af_1 - \af_2\af_6)^2\frac{y^3}{\af_2^3\af_6(\af_6 - 1)^2},\\
	    			\af^*_6 &= (\af_1 - \af_2 - 2\af_2\af_6^2 + 2\af_2\af_6)(\af_1 - \af_2\af_6)^2\frac{\af_6y^3}{\af_2^3(\af_6 - 1)^3},\\
	    			\af^*_7 &= (\af_1 - \af_2 - 2\af_2\af_6^2 + 2\af_2\af_6)(\af_1 - \af_2\af_6)^2\frac{y^3}{\af_2^3(\af_6 - 1)^3}.
	    		\end{align*}
	    		Choosing $y=\frac{\af_2(\af_6 - 1)(\af_1 - \af_2 - 2\af_2\af_6^2 + 2\af_2\af_6)}{\af_1 - \af_2\af_6}$ we obtain the  representative $\La\nb 1+(\af-1)\nb 3+\left(1-\frac 1{\af}\right)\nb 5+\af\nb 6+\nb 7\Ra$, where $\af^2 - \af + \lb=0$.
	    		
	    		If $\lb\not\in\{0,\frac 14\}$, then $\af=\frac{1\pm\sqrt{1-4\lb}}2$, so $\af-1=\frac{-1\pm\sqrt{1-4\lb}}2$ and $1-\frac 1{\af}=\frac{\sqrt{1-4\lb}\mp 1}{\sqrt{1-4\lb}\pm 1}$, and we obtain the following two representatives $\La \nb 1+\frac{-1\pm\sqrt{1-4\lb}}2\nb 3+\frac{\sqrt{1-4\lb}\mp 1}{\sqrt{1-4\lb}\pm 1}\nb 5+\frac{1\pm\sqrt{1-4\lb}}2\nb 6+\nb 7 \Ra$ which will be joined with the families $\La\nb 1+\frac{1\pm\sqrt{1-4\lb}}2\af\nb 3+\af\nb 5+\frac{1\pm\sqrt{1-4\lb}}2\nb 6+\nb 7\Ra_{\af\not\in\left\{0,-1,\frac{\sqrt{1-4\lb}\mp 1}{\sqrt{1-4\lb}\pm 1}\right\}}$ found above.
	    		
	    		If $\lb=\frac 14$, then $\af=\frac 12$, so $\af-1=-\frac 12$ and $1-\frac 1{\af}=-1$, and we obtain the representative $\la\nb 1-\frac 12\nb 3-\nb 5+\frac 12\nb 6+\nb 7\ra$ which will be joined with the family $\la\nb 1+\frac 12\af\nb 3+\af\nb 5+\frac 12\nb 6+\nb 7\ra_{\af\not\in\{0,-1\}}$ found above.
    \item Let $\af_1 - \af_2 - 2\af_2\af_6^2 + 2\af_2\af_6=0$. Then $\af_1=(2\af_6^2 - 2\af_6 + 1)\af_2$. %Taking $z=-(\af_6 - 1)u - \frac{\af_2x(x + 2\af_6(\af_6 - 1)y)}{x+(\af_6 - 1)y}$ we obtain $\af^*_2=0$ and
	    %		\begin{align*}
	    %		\af^*_1 &= 2\af_2(x - \af_6y)^2(\af_6 - 1)^2,\\
	  %  		\af^*_3 &= (\af_6 - 1)(x - \af_6y)(x + (\af_6-1)y)^2,\\
	 %   		\af^*_4 &=-\af_2\left(1 - \frac 1{\af_6}\right)(x - \af_6y)^2,\\
	 %   		\af^*_5 &= \left(1 - \frac 1{\af_6}\right)(x - \af_6y)(x + (\af_6-1)y)^2,\\
	%    		\af^*_6 &= \af_6(x - \af_6y)(x + (\af_6-1)y)^2,\\
	%    		\af^*_7 &= (x - \af_6y)(x + (\af_6-1)y)^2.
	 %   		\end{align*}
	 %   		Choosing $y=0$ and $x=-\af_2\left(1 - \frac 1{\af_6}\right)$ we obtain the representative $\La 2\af(1-\af)\nb 1+(\af-1)\nb 3+\nb 4+\left(1 - \frac 1{\alpha}\right)\nb 5+\af\nb 6+\nb 7\Ra$, where $\af^2 - \af + \lb=0$. Observe that $2\af(1-\af)=2\lb$.
	    		
	 %   		If $\lb\not\in\{0,\frac 14\}$, then we obtain two representatives $\La 2\lb\nb 1+\frac{-1\pm\sqrt{1-4\lb}}2\nb 3+\nb 4+\frac{\sqrt{1-4\lb}\mp 1}{\sqrt{1-4\lb}\pm 1}\nb 5+\frac{1\pm\sqrt{1-4\lb}}2\nb 6+\nb 7 \Ra$.
	    		
	 %   		If $\lb=\frac 14$, then we obtain the representative $\la \frac 12\nb 1-\frac 12\nb 3+\nb 4-\nb 5+\frac 12\nb 6+\nb 7\ra$.

	In this case $\alpha_3^*, \alpha_5^*, \alpha_6^*, \alpha_7^*$ are as above and
		\begin{align*}
	    		\af^*_1 &= \alpha_2 (x - y \alpha_6)^2 (2\af_6^2 - 2\af_6 + 1)\\
	    		\af^*_2 &=\alpha_2 (x - y \alpha_6)^2.
	    		\end{align*}
	Therefore, we obtain the representative $\La (2 \alpha^2-2\alpha + 1)\nabla_1+\nabla_2+ (\alpha-1)\nabla_3 + (1-\frac{1}{\alpha})\nabla_5 +\alpha\nabla_6+\nabla_7\Ra$, where $\af^2 - \af + \lb=0$. Note that $2 \alpha^2-2\alpha + 1 = 1- 2\lambda.$
	
	If $\lb\not\in\{0,\frac 14\},$ then we obtain two representatives $\La (1-2\lb)\nabla_1 + \nabla_2 +\frac{-1+\sqrt{1-4\lb}}2\nb 3+\frac{\sqrt{1-4\lb}- 1}{\sqrt{1-4\lb}+ 1}\nb 5+\frac{1+\sqrt{1-4\lb}}2\nb 6+\nb 7 \Ra, \La (1-2\lb)\nabla_1 + \nabla_2 +\frac{-1-\sqrt{1-4\lb}}2\nb 3+\frac{\sqrt{1-4\lb}+ 1}{\sqrt{1-4\lb}- 1}\nb 5+\frac{1-\sqrt{1-4\lb}}2\nb 6+\nb 7 \Ra = \La (1-2\lb)\lb\nabla_1 + \lb\nabla_2 -\lb\frac{1+\sqrt{1-4\lb}}2\nb 3-\Big(\frac{\sqrt{1-4\lb}+ 1}{2}\Big)^2\nb 5+\lb\frac{1-\sqrt{1-4\lb}}2\nb 6+\lb\nb 7 \Ra$. %$\La (1-2\lb)\nabla_1 + \nabla_2 +\frac{-1\pm\sqrt{1-4\lb}}2\nb 3+\frac{\sqrt{1-4\lb}\mp 1}{\sqrt{1-4\lb}\pm 1}\nb 5+\frac{1\pm\sqrt{1-4\lb}}2\nb 6+\nb 7 \Ra$
	
		If $\lb=\frac 14$, then we obtain the representative $\la \frac 12\nb 1 + \nabla_2 -\frac 12\nb 3-\nb 5+\frac 12\nb 6+\nb 7\ra$.
	
    \item 	Let $\af_1-\af_2\af_6=0$. We may assume that $\af_6\ne\frac 12$ (and hence $\lb\ne\frac 14$), since otherwise $\af_1=(2\af_6^2 - 2\af_6 + 1)\af_2$, which has already been considered.
    
    %Then $\af_1=\af_2\af_6$. Choosing $z=-(\af_6 - 1)u - \af_2x$ we obtain $\af^*_2=0$ and
	%			\begin{align*}
	%			\af^*_1 &= -\af_2(\af_6 - 1)(x - \af_6y)(x + (\af_6-1)y),\\
	%			\af^*_3 &= (\af_6 - 1)(x - \af_6y)(x + (\af_6-1)y)^2,\\
	%			\af^*_4 &= -\af_2\left(1 - \frac 1{\af_6}\right)(x - \af_6y)(x + (\af_6-1)y),\\
	%			\af^*_5 &= \left(1 - \frac 1{\af_6}\right)(x - \af_6y)(x + (\af_6-1)y)^2,\\
	%			\af^*_6 &= \af_6(x - \af_6y)(x + (\af_6-1)y)^2,\\
	%			\af^*_7 &= (x - \af_6y)(x + (\af_6-1)y)^2.
%				\end{align*}
%				Choosing $y=0$ and $x=-\af_2\left(1 - \frac 1{\af_6}\right)$, we obtain the representative $\La\af\nb 1+(\af-1)\nb 3+\nb 4+\left(1-\frac 1{\af}\right)\nb 5+\af\nb 6+\nb 7\Ra$, where $\af^2 - \af + \lb=0$.
				
	%			If $\lb\not\in\{0,\frac 14\}$, then we obtain two representatives $\La \frac{1\pm\sqrt{1-4\lb}}2\nb 1+\frac{-1\pm\sqrt{1-4\lb}}2\nb 3+\nb 4+\frac{\sqrt{1-4\lb}\mp 1}{\sqrt{1-4\lb}\pm 1}\nb 5+\frac{1\pm\sqrt{1-4\lb}}2\nb 6+\nb 7 \Ra$.
				
	%			If $\lb=\frac 14$, then we obtain the representative $\la \frac 12\nb 1-\frac 12\nb 3+\nb 4-\nb 5+\frac 12\nb 6+\nb 7\ra$ found before.
				
	%			\bigskip
				
	In this case $\alpha_3^*, \alpha_5^*, \alpha_6^*, \alpha_7^*$ are as above and
		\begin{align*}
	    		\af^*_1 &= \alpha_2\alpha_6 (x - y \alpha_6)(x+(\alpha_6 - 1)y)\\
	    		\af^*_2 &=\alpha_2 (x - y \alpha_6)(x+(\alpha_6 - 1)y).
	    		\end{align*}
	Therefore, we obtain the representative $\La \alpha\nabla_1+\nabla_2+ (\alpha-1)\nabla_3 + (1-\frac{1}{\alpha})\nabla_5 +\alpha\nabla_6+\nabla_7\Ra$, where $\af^2 - \af + \lb=0$. 
	
		If $\lb\not\in\{0,\frac 14\},$ then we obtain two representatives $\La \frac{1+\sqrt{1-4\lb}}2\nabla_1 + \nabla_2 +\frac{-1+\sqrt{1-4\lb}}2\nb 3+\frac{\sqrt{1-4\lb}- 1}{\sqrt{1-4\lb}+1}\nb 5+\frac{1+\sqrt{1-4\lb}}2\nb 6+\nb 7 \Ra, \La \frac{1-\sqrt{1-4\lb}}2\nabla_1 + \nabla_2 -\frac{1+\sqrt{1-4\lb}}2\nb 3+\frac{\sqrt{1-4\lb}+ 1}{\sqrt{1-4\lb}- 1}\nb 5+\frac{1-\sqrt{1-4\lb}}2\nb 6+\nb 7 \Ra = \La \lb\frac{1-\sqrt{1-4\lb}}2\nabla_1 + \lb\nabla_2 -\lb\frac{1+\sqrt{1-4\lb}}2\nb 3-\Big(\frac{\sqrt{1-4\lb}+ 1}{2}\Big)^2\nb 5+\lb\frac{1-\sqrt{1-4\lb}}2\nb 6+\lb\nb 7 \Ra$
		%$\La \frac{1\pm\sqrt{1-4\lb}}2\nabla_1 + \nabla_2 +\frac{-1\pm\sqrt{1-4\lb}}2\nb 3+\frac{\sqrt{1-4\lb}\mp 1}{\sqrt{1-4\lb}\pm 1}\nb 5+\frac{1\pm\sqrt{1-4\lb}}2\nb 6+\nb 7 \Ra$.
	    
	    %If $\lb=\frac 14$, then we obtain the representative $\la \frac 12\nb 1 + \nabla_2 -\frac 12\nb 3-\nb 5+\frac 12\nb 6+\nb 7\ra$ found before.

\end{itemize}
				\item $\af_5 + 1=0$. Then $\af_3=-\af_6$. Observe that we may assume that $\af_6\ne\frac 12$ (and hence $\lb\ne\frac 14$), since otherwise $\af_6 - \af_5\af_6 - 1=0,$ which was considered above. Choosing $z$ such that $\af^*_4=0$, we have
				\begin{align*}
					\af^*_1 &= \af_1x^2 + 2\af_2\af_6(\af_6 - 1)xy + \af_6(\af_6-1)(\af_1 - \af_2)y^2,\\
					\af^*_2 &= \af_2x^2 + 2(\af_1 - \af_2)xy + (\af_2\af_6(\af_6-1) - \af_1 + \af_2)y^2,\\
					\af^*_3 &= -\af_6(x - \af_6y)(x + (\af_6-1)y)^2,\\
					\af^*_5 &= -(x - \af_6y)(x + (\af_6-1)y)^2,\\
					\af^*_6 &= \af_6(x - \af_6y)(x + (\af_6-1)y)^2,\\
					\af^*_7 &= (x - \af_6y)(x + (\af_6-1)y)^2.
				\end{align*}
				
				If $(\alpha_1,\alpha_2)=(0,0)$, then we obtain the representative $\la-\bt\nb 3-\nb 5+\bt\nb 6+\nb 7\ra$, where $\bt^2 - \bt + \lb=0$.
				
				If $\lb\not\in\{0,\frac 14\}$, then we have two representatives $\La\frac{-1\mp\sqrt{1-4\lb}}2\nb 3-\nb 5+\frac{1\pm\sqrt{1-4\lb}}2\nb 6+\nb 7\Ra$ which will be joined with the families $\La\frac{1\pm\sqrt{1-4\lb}}2\af\nb 3+\af\nb 5+\frac{1\pm\sqrt{1-4\lb}}2\nb 6+\nb 7\Ra_{\af\ne 0,-1}$ found above.
				
%				If $\lb=\frac 14$, we have the representative $\la-\frac 12\nb 3-\nb 5+\frac 12\nb 6+\nb 7\ra$ which was found above.
				
				If $\lb=0$, then we have two representatives $\la-\nb 3-\nb 5+\nb 6+\nb 7\ra$ and $\la-\nb 5+\nb 7\ra$ which will be joined the families $\la\af\nb 3+\af\nb 5+\nb 6+\nb 7\ra_{\af\ne 0,-1}$ and $\la\af\nb 5+\nb 7\ra_{\af\ne 0,-1}$ found above.
				
				Let $(\alpha_1,\alpha_2) \neq (0,0).$ Suppose that $\af_2=0$. Then we have $\af^*_2 =  \af_1 y (2x - y).$ So, whenever $(\alpha_1,\alpha_2) \neq (0,0)$, we may find $x,y$ such that $\af^*_2\neq 0$ and $\af^*_7=1$. In the rest of the case we will suppose that $\alpha_2 \neq 0.$
				
				The determinant of the equation $\af^*_2=0$ is $4(\af_2\af_6 + \af_1 - \af_2)(\af_1-\af_2\af_6)$.
				
				\begin{itemize}
				    \item Let $(\af_2\af_6 + \af_1 - \af_2)(\af_1-\af_2\af_6)\ne 0.$ 
				    
				    Suppose that $\af_2 = 2\af_1.$ Observe that in this case $(\af_2\af_6 + \af_1 - \af_2)(\af_2\af_6 - \af_1)\ne 0$ is equivalent to $\af_1^2(2\af_6 - 1)^2\ne 0$. Then $\alpha_2^* - 2\alpha_1^* = -2\alpha_1(2x-y)y(2\alpha_6-1)^2,$ so choosing appropriate $x,y$ we may suppose that $\af_2\ne 2\af_1.$
				    
				    The equation $\af^*_2=0$ has two  solutions $x_1=\mu_1 y$ and $x_2=\mu_2 y$, where $\mu_1,\mu_2\in\Co$, $\mu_1\ne\mu_2$. As in the case 1(a) if $\mu_1^2 - \mu_1 + \lb=\mu_2^2 - \mu_2 + \lb=0$, then $\mu_1+\mu_2=1$. But $\mu_1+\mu_2=-\frac{2(\af_1 - \af_2)}{\af_2}$, whence $2(\af_1 - \af_2)=-\af_2$, which contradicts the assumption that $\af_2\ne 2\af_1$. Thus, we may choose $x=\mu_iy$ to make $\af^*_2=0$. Since in this case the condition $(\af_1,\af_2,\af_4)\ne (0,0,0)$ is invariant under automorphisms, we have $\af^*_1\ne 0$ for such a choice of $x$. Thus, we obtain the family of representatives $\la\af\nb 1-\bt\nb 3-\nb 5+\bt\nb 6+\nb 7\ra$, where $\bt^2 - \bt + \lb=0$ and $\af\ne 0$. Then taking $y=0$ and $x=\af$, we obtain the representative $\la\nb 1-\bt\nb 3-\nb 5+\bt\nb 6+\nb 7\ra.$  It will be joined with the family $\la\nb 1+\af\bt\nb 3+\af\nb 5+\bt\nb 6+\nb 7\ra_{\af\not\in \{0,-1\}}.$
				\item 	Let $\af_1-\af_2\af_6=0$. Then
				\begin{align*}
				\af^*_1 =\af_2\af_6(x + (\af_6-1)y)^2,
				\af^*_2 =\af_2(x + (\af_6-1)y)^2.
				\end{align*}
			 	So, taking $x=\af_6y+\af_2$, we obtain the representative $\la\bt\nb 1+\nb 2-\bt\nb 3-\nb 5+\bt\nb 6+\nb 7\ra$, where $\bt^2 - \bt + \lb=0$. If $\lb\not\in\{0,\frac 14\}$, then we have two representatives $\La\frac{1\pm\sqrt{1-4\lb}}2\nb 1+\nb 2-\frac{1\pm\sqrt{1-4\lb}}2\nb 3-\nb 5+\frac{1\pm\sqrt{1-4\lb}}2\nb 6+\nb 7\Ra$. %If $\lb=\frac 14$, we have the representative $\la\frac 12\nb 1+\nb 2-\frac 12\nb 3-\nb 5+\frac 12\nb 6+\nb 7\ra$ found above.
			 	If $\lb=0$, then we have two representatives $\la\nb 1+\nb 2-\nb 3-\nb 5+\nb 6+\nb 7\ra$ and $\la\nb 2-\nb 5+\nb 7\ra$.
			 	\item 		Let $\af_2\af_6 + \af_1 - \af_2=0$. Then
			 	\begin{align*}
			 	\af^*_1 =\af_2(1-\af_6)(x - \af_6y)^2,
			 	\af^*_2 =\af_2(x - \af_6y)^2.
			 	\end{align*}
			 	Taking $x,y$ such that $(x + (\af_6-1)y)^2=\af_2(x - \af_6y)\ne 0$, we obtain the representative $\la(1-\bt)\nb 1+\nb 2-\bt\nb 3-\nb 5+\bt\nb 6+\nb 7\ra$, where $\bt^2 - \bt + \lb=0$. If $\lb\not\in\{0,\frac 14\}$, then we have two representatives $\La\frac{1\mp\sqrt{1-4\lb}}2\nb 1+\nb 2-\frac{1\pm\sqrt{1-4\lb}}2\nb 3-\nb 5+\frac{1\pm\sqrt{1-4\lb}}2\nb 6+\nb 7\Ra$. 
			 	%If $\lb=\frac 14$, we have the representative $\la\frac 12\nb 1+\nb 2-\frac 12\nb 3-\nb 5+\frac 12\nb 6+\nb 7\ra$ found above.
			 	If $\lb=0$, then we have two representatives $\la\nb 2-\nb 3-\nb 5+\nb 6+\nb 7\ra$ and $\la\nb 1+\nb 2-\nb 5+\nb 7\ra$.
				\end{itemize}
				%Let $\af_2\ne 0$, $(\af_2\af_6 + \af_1 - \af_2)(\af_2\af_6 - \af_1)\ne 0$ and $\af_2= 2\af_1$. 
				%Then
				%\begin{align*}
				%\af^*_2 = \frac{\af_1}2((2x-(1+(2\af_6-1)i)y)(2x-(1-(2\af_6-1)i)y)).
				%\end{align*}
			%	Choosing $x=\frac{(1+(2\af_6-1)i)y}2$ and $z=\af_1((1 - i)\af_6 - 1)y + \af_6u$ we obtain $\af^*_2=\af^*_4=0$ and 
			%	\begin{align*}
			%		\af^*_1 &=\frac i2\af_1(2\af_6 - 1)^3y^2,\\
			%		\af^*_3 &=\frac{i+1}4(2\af_6 - 1)^3\af_6y^3,\\
		%			\af^*_5 &=\frac{i+1}4(2\af_6 - 1)^3y^3,\\
			%		\af^*_6 &=-\frac{i+1}4(2\af_6 - 1)^3\af_6y^3,\\
			%		\af^*_7 &=-\frac{i+1}4(2\af_6 - 1)^3y^3.
			%	\end{align*}
			%	Then choosing $y=-(i+1)\af_1$ we obtain the representative $\la\nb 1-\bt\nb 3-\nb 5+\bt\nb 6+\nb 7\ra$, where $\bt^2 - \bt + \lb=0$. It was found above.

	    	\end{enumerate}
    	
    		\item $\af_3=\af_5(1-\af_6)$. Observe that we may assume that $\af_6\ne\frac 12$ (and hence $\lb\ne\frac 14$), since otherwise $\af_3=\af_5\af_6$, which has already been considered.
    		
  		Consider $\af^*_2=\af^*_4=0$ as a system of linear equations in $z$ and $u$. Then its determinant is the following polynomial in $x$ and $y$:
    		\begin{align*}
    			D(x,y)&=(\af_5^2\af_6 - \af_5^2 - \af_5 + \af_6 - 1)x^2\\
    			&\quad-(2\af_5^2\af_6^2 - 2\af_5^2\af_6 - 2\af_6^2 - \af_5 + 4\af_6 - 2)xy\\
    			&\quad+(\af_5^2\af_6^2 + \af_5\af_6 + \af_6^2 - 2\af_6 + 1)(\af_6 - 1)y^2.
    		\end{align*}
    		
    		Suppose that $\af_5^2\af_6 - \af_5^2 - \af_5 + \af_6 - 1 = 0.$ In this case $\af_5^2+1\ne 0$, since otherwise $(\af_5^2+1)\af_6 = \af_5^2 + \af_5 + 1$ would imply $\af_5=0$, whence $\af_5^2+1=1$, a contradiction. Thus, $\af_6=\frac{\af_5^2 + \af_5 + 1}{\af_5^2+1}$ and hence $\af_3=-\frac{\af_5^2}{\af_5^2+1}$. Then $D(x,y) = \frac{y \alpha_5 (1 + \alpha_5)^3 ((1-\alpha_5)x + \alpha_5y )}{(1 + \alpha_5^2)^2}.$ If $\af_5+1=0,$ then $\af_5=-1$, $\af_6=\frac 12$, $\af_3=-\frac 12=\af_5\af_6$. This case has been considered above. Therefore, we may suppose that $D(x,y)$ is not identically zero, and hence we may solve $\af^*_2=\af^*_4=0$ in $z$ and $u$. Choosing these values of $z$ and $u$, we may suppose that $\alpha_2 = \alpha_4 = 0$ from the very beginning.
    		
Then taking the appropriate values of $z$ and $u$ we have 	
\begin{align*}  	       
                    \af^*_1&=\frac{\alpha_1(\af_5^2\af_6 - \af_5^2 - \af_5 + \af_6 - 1)(x + y(\af_6-1))^2(x-y\af_6)^2}{D(x,y)},\\	
                    \af^*_3&=\alpha_5 (1 - \alpha_6) (x - y \alpha_6)^2 (x + y(\af_6-1)),\\
    				\af^*_5&=\alpha_5 (x - y \alpha_6)^2 (x + y(\af_6-1)),\\
    				\af^*_6&=\alpha_6 (x - y \alpha_6) (x + y(\af_6-1))^2,\\
    				\af^*_7&=(x - y \alpha_6) (x + y(\af_6-1))^2.
    			\end{align*}
    			
    		Observe that for $x,y$ such that $\alpha_7^* = 1$ and $D(x,y)\ne 0$ we have $(\af_5^*)^2\af_6^* - (\af_5^*)^2 - \af_5^* + \af_6^* - 1 = \frac{D(x,y)}{(x + y(\alpha_6-1))^2}\ne 0,$ so we may assume that $\af_5^2\af_6 - \af_5^2 - \af_5 + \af_6 - 1 \neq 0$ from the very beginning.
    		
    		Let $\alpha_5 \neq -1$.  Taking $x = \frac{y (\alpha_6(\alpha_5-1)+1)}{\alpha_5+1},$ we have $ D(x,y) = \frac{y^2 \alpha_5^2 (2 \alpha_6-1)^2}{(\alpha_5+1)^2}\ne 0$, $x^2 - xy + \lambda y^2 = -\frac{y^2 \alpha_5 (2 \alpha_6-1)^2}{(\alpha_5+1)^2}\ne 0$, $\alpha_5^* = -\alpha_7^*$, so we get the representative $\langle \alpha \nabla_1 + (\beta-1)\nabla_3 - \nabla_5 + \beta \nabla_6 + \nabla_7 \rangle_{\beta^2 - \beta + \lambda=0},$ where
            \[
            \alpha = -\frac{\alpha_1 (\alpha_5+1) (\af_5^2\af_6 - \af_5^2 - \af_5 + \af_6 - 1)}{y \alpha_5^2 (2 \alpha_6-1)^2}
            \]
            
    \begin{itemize}
        \item  If $\alpha_1 \neq 0,$ then we take $y = -\frac{\alpha_1 (\alpha_5+1) (\af_5^2\af_6 - \af_5^2 - \af_5 + \af_6 - 1)}{\alpha_5^2 (2 \alpha_6-1)^2} \neq 0,$ so that $x^2 - xy + \lambda y^2 \neq 0$, $D(x,y) \neq 0$, and we get the representative $\langle \nabla_1 + (\alpha-1)\nabla_3 - \nabla_5 + \alpha \nabla_6 + \nabla_7 \rangle_{\alpha^2 - \alpha + \lambda=0}.$
        
            If $\lambda \neq 0$, then we get two distinct representatives $\La \nb 1 - \frac{1\mp\sqrt{1-4\lb}}2\nb 3-\nb 5+\frac{1\pm\sqrt{1-4\lb}}2\nb 6+\nb 7\Ra.$
            
            If $\lambda = 0$, then we get two distinct representatives $\La \nb 1 - \nb 3-\nb 5 +\nb 7\Ra$ and $\La \nb 1 -\nb 5+ \nb 6+\nb 7\Ra.$
            
            \item If $\alpha_1 = 0,$ then we get the same representatives but without $\nabla_1.$
    \end{itemize}       

    	   \end{enumerate}
	    
	    \item $\af_6^2 - \af_6\af_7 + \lb\af_7^2=0$ and $\af_5=0$. We may assume that $\af_3=0$, since the case $\af_5=0$ and $\af_3\ne 0$ was considered in 1(b). Then $\af^*_3=\af^*_5=0$. A suitable choice of $x,y$ and $z$ gives $\af^*_7=1$ and $\af^*_2=0$, so we shall assume that $\af_7=1$ and $\af_2=0$. As above, $\lb=\af_6-\af_6^2$. Now, choosing $z=-\frac{\af_1xy + (\af_4\af_6^2 - \af_4\af_6)y^2}{x+(\af_6 - 1)y}$ we obtain $\af^*_2=0$.
	    
	    \begin{enumerate}
	    	\item $\af_6\ne 1$. Then choosing $u=-\frac{\af_4x^2 + (\af_1 - 2\af_4)xy - (\af_1 - \af_4)y^2}{(\af_6 - 1)(x + (\af_6 - 1)y)}$ we have $\af^*_4=0$ and
	    	\begin{align*}
	    		\af^*_1&=(\af_1-\af_4\af_6)(x-\af_6y)^2,\\
	    		\af^*_6 &= \af_6(x - \af_6y)(x + (\af_6-1)y)^2,\\
	    		\af^*_7 &= (x - \af_6y)(x + (\af_6-1)y)^2.
	    	\end{align*}
	    	\begin{enumerate}
	    		\item $\af_1-\af_4\af_6\ne 0$. Then choosing $y=0$ and $x=\af_1-\af_4\af_6$ we obtain the representative $\la\nb 1+\af\nb 6+\nb 7\ra$, where $\af^2 - \af + \lb=0$. If $\lb\not\in\{0,\frac 14\}$, then we have two representatives $\La\nb 1+\frac{1\pm\sqrt{1-4\lb}}2\nb 6+\nb 7\Ra$. We will join them with the families $\La\nb 1+\frac{1\pm\sqrt{1-4\lb}}2\af\nb 3+\af\nb 5+\frac{1\pm\sqrt{1-4\lb}}2\nb 6+\nb 7\Ra_{\af\ne 0}$found above. If $\lb=\frac 14$, then we have the representative $\la\nb 1+\frac 12\nb 6+\nb 7\ra$, which will be joined with the family $\la\nb 1+\frac 12\af\nb 3+\af\nb 5+\frac 12\nb 6+\nb 7\ra_{\af\ne 0}$ found above. If $\lb=0$, then $\af_6=0$, since $\af_6\ne 1$ by assumption. So, we have the representative $\la\nb 1+\nb 7\ra$. It will be joined with $\la\nb 1+\af\nb 5+\nb 7\ra_{\af\ne 0}$.
	    		
	    		\item $\af_1-\af_4\af_6=0$. Then we have the same representatives as above, but without $\nb 1$. We join them with the families found above. 
	    	\end{enumerate}
    	
    		\item $\af_6=1$, so that $\lb =0$. Then 
    		\begin{align*}
    			\af^*_1 &=\af_1x(x - y),\\
    			\af^*_4 &=(\af_4(x-y) + \af_1y)(x - y),\\
    			\af^*_6 &= x^2(x - y),\\
    			\af^*_7 &= x^2(x - y).
    		\end{align*}
    		\begin{enumerate}
    			\item $\af_1\ne 0$ and $\af_1-\af_4\ne 0$. Then choosing $x=\af_1$ and $y=-\frac{\af_1\af_4}{\af_1-\af_4}$ we obtain the representative $\la\nb 1+\nb 6+\nb 7\ra$ which will be joined with the family $\la\nb 1+\af\nb 3+\af\nb 5+\nb 6+\nb 7\ra_{\af\ne 0}$ found above.
    			
    			\item $\af_1=0$. Then we have two representatives $\la\nb 4+\nb 6+\nb 7\ra$ and $\la\nb 6+\nb 7\ra$ depending on whether $\af_4=0$ or not. Both of them belong to the families found above.
    			
    			\item $\af_1-\af_4=0$. Then
    			\begin{align*}
    			\af^*_1 &=\af_1x(x - y),\\
    			\af^*_4 &=\af_1x(x - y),\\
    			\af^*_6 &= x^2(x - y),\\
    			\af^*_7 &= x^2(x - y).
    			\end{align*}
    			Thus, we have two representatives $\la\nb 1+\nb 4+\nb 6+\nb 7\ra$ and $\la\nb 6+\nb 7\ra$. The second one was found above.
    		\end{enumerate}
	    \end{enumerate}
	\end{enumerate}
	
        \item $\alpha_7=0$. We may assume that $\af_6=0$, since the case $\af_7=0$ and $\af_6\ne 0$ was considered in 1(a).
        \begin{enumerate}
            \item Let $\alpha_3^2 - \alpha_3\alpha_5 + \lambda\alpha_5^2 \neq 0.$ If $\alpha_5 \neq 0,$ we may take $x = \frac{(\alpha_5 - \alpha_3)y}{\alpha_5}$ and get $\af^*_5=0$. Observe that $(\af^*_3)^2 - \af^*_3\af^*_5 + \lb(\af^*_5)^2=(\af_3^2 - \af_3\af_5 + \lb\af_5^2)(x^2 - xy + \lb y^2)^3$, so the condition $\af_3^2 - \af_3\af_5 + \lb\af_5^2\ne 0$ is invariant under the automorphisms. Thus, we shall assume that $\af_5=0$ and $\af_3\ne 0$ from the very beginning. Choosing $y=0$, we have
            \begin{align*}
\af^*_1 &= x(\alpha_1x + \alpha_3z),\\
\af^*_2 &= x(\alpha_2x + \alpha_3u),\\
\af^*_3 &= \alpha_3x^3,\\
\af^*_4 &= \alpha_4x^2.
\end{align*}
Taking $z = -\frac{\alpha_1x}{\alpha_3}, u =  -\frac{\alpha_2x}{\alpha_3},$ we get two representatives $\langle \nabla_3 + \nabla_4 \rangle$ and $\langle \nabla_3 \rangle$ depending on whether $\af_4=0$ or not. 
    \item Let $\alpha_3^2 - \alpha_3\alpha_5 + \lambda\alpha_5^2 = 0, \alpha_5 \neq 0.$ Taking $x,y$ such that $(x^2-xy+\lambda y^2)(\alpha_5x + (\alpha_3-\alpha_5)y) = 1,$ we may suppose that $\alpha_5^* = 1$ and $\lambda = \alpha_3 - \alpha_3^2.$ Consider $\alpha_2^* = 0, \alpha_4^* = 0$ as a linear system in $u, z.$ Its determinant is $\alpha_3(x + (\alpha_3-1)y)^2.$ If $\alpha_3 = 0,$ then $\lambda = 0$ and we get a Leibniz cocycle. Therefore, we may suppose that this determinant is nonzero and the system $\alpha_2^* = 0, \alpha_4^* = 0$ has a unique solution. We get the family of representatives $\langle \alpha_1\nabla_1 + \alpha_3\nabla_3 + \nabla_5 \rangle,$ where $\alpha_3^2 - \alpha_3 + \lambda = 0,$ except for $(\lambda,\alpha_3) = (0,0)$ which gives a Leibniz cocycle. Therefore, we may suppose that $\alpha_2 = \alpha_4 = 0$ from the very beginning and
    \begin{align*}
    \af^*_1 &= \alpha_1(x-\alpha_3y)^2,\\
    \af^*_3 &= \alpha_3(x-\alpha_3y)(x+(\alpha_3-1)y)^2,\\
    \af^*_5 &= (x-\alpha_3y)(x+(\alpha_3-1)y)^2.
    \end{align*}
    If $\lb\not\in\{0,\frac 14\}$, then we have the following representatives: $\La \nabla_1 +  \frac{1\pm\sqrt{1-4\lambda}}{2}\nabla_3 + \nabla_5 \Ra$ and $\La \frac{1\pm\sqrt{1-4\lambda}}{2}\nabla_3 + \nabla_5 \Ra$. If $\lb=\frac 14$, then we have the representatives $\la\nb 1+\frac 12\nb 3+\nb 5\ra$ and $\la\frac 12\nb 3+\nb 5\ra$.  If $\lb=0$, then we have the representatives $\langle \nabla_1 + \nabla_3 + \nabla_5 \rangle$ and  $\langle \nabla_3 + \nabla_5 \rangle$. 
        \end{enumerate}
    \end{enumerate}
%\newpage

	\begin{landscape}
	\tiny{
	\begin{tabular}{|c|c|c|}
	    \hline
	    $\lb=0$                      & $\lb=\frac 14$                & $\lb\not\in\{0,\frac 14\}$\\
	    \hline
	    $\begin{array}{l}
	       \la\nb 1+\af\nb 3+\bt\nb 5+\nb 6\ra_{\af,\bt\ne 0},\\
	       \la\af\nb 3+\bt\nb 5+\nb 6\ra_{\af,\bt\ne 0},\\
	       \la\nb 2+\af\nb 5+\nb 6\ra_{\af\ne 1},\\
	       \la\af\nb 5+\nb 6\ra,\\
	       \la\nb 2+\af\nb 4+\nb 5+\nb 6\ra,\\
	       \la\nb 4+\nb 5+\nb 6\ra,\\
	       \la \nb 2+\af\nb 3+\nb 6\ra_{\af\ne 0},\\
	       \la \af\nb 3+\nb 6\ra_{\af\ne 0},\\
	       \la\nb 1+\af\nb 2-\nb 3+\nb 6\ra,\\
	       \la\nb 1+\af\nb 3+\nb 7\ra_{\af\ne 0},\\
	       \la\af\nb 3+\nb 7\ra_{\af\ne 0},\\
	       \la\af\nb 3+\nb 4+\nb 6+\nb 7\ra,\\
	       \la\af\nb 3+\nb 6+\nb 7\ra,\\
	       \la\nb 1-\nb 3+\af\nb 4+\nb 6+\nb 7\ra,\\
	       \la\nb 1+\af\nb 5+\nb 7\ra,\\
	       \la\nb 1+\af\nb 3+\af\nb 5+\nb 6+\nb 7\ra,\\
	       \la\af\nb 5+\nb 7\ra,\\
	       \la\af\nb 3+\af\nb 5+\nb 6+\nb 7\ra_{\af\ne 0},\\
	       \la\nb 1+\nb 2-\nb 3-\nb 5+\nb 6+\nb 7\ra,\\
	       \la\nb 2-\nb 5+\nb 7\ra,\\
	       \la\nb 2-\nb 3-\nb 5+\nb 6+\nb 7\ra,\\
	       \la\nb 1+\nb 2-\nb 5+\nb 7\ra,\\
	       \la\nb 1-\nb 3-\nb 5+\nb 7\ra,\\
	       \la\nb 1-\nb 5+\nb 6+\nb 7\ra,\\
	       \la-\nb 3-\nb 5+\nb 7\ra,\\
	       \la-\nb 5+\nb 6+\nb 7\ra,\\
	       \la\nb 1+\nb 4+\nb 6+\nb 7\ra,\\
	       \la \nb 3 + \nb 4 \ra,\\
	       \la \nb 3 \ra,\\
	       \la \nb 1 + \nb 3 + \nb 5 \ra,\\
	       \la \nb 3 + \nb 5 \ra
	    \end{array}$
	    &
	    $\begin{array}{l}
	         \la\nb 1+\af\nb 3+\bt\nb 5+\nb 6\ra_{\af,\bt\ne 0},\\
	         \la\af\nb 3+\bt\nb 5+\nb 6\ra_{\af,\bt\ne 0},\\
	         \la\nb 2+\af\nb 5+\nb 6\ra_{\af\ne 2},\\
	         \la\af\nb 5+\nb 6\ra,\\
	         \la\nb 2+\af\nb 4+2\nb 5+\nb 6\ra,\\
	         \la\nb 4+2\nb 5+\nb 6\ra,\\
	         \la \nb 2+\af\nb 3+\nb 6\ra_{\af\ne 0},\\
	         \la \af\nb 3+\nb 6\ra_{\af\ne 0},\\
	         \la\nb 1+\af\nb 2-\nb 3+\nb 6\ra,\\
	         \la\nb 1+\af\nb 3+\frac 12\nb 6+\nb 7\ra_{\af\ne 0},\\
	         \la\af\nb 3+\frac 12\nb 6+\nb 7\ra_{\af\ne 0},\\
	         \la\nb 1+\frac 12\af\nb 3+\af\nb 5+\frac 12\nb 6+\nb 7\ra,\\
	         \la\frac 12\af\nb 3+\af\nb 5+\frac 12\nb 6+\nb 7\ra,\\
	         \la \frac 12\nb 1+ \nb 2 -\frac 12\nb 3-\nb 5+\frac 12\nb 6+\nb 7\ra,\\
	        % \la\frac 12\nb 1+\nb 2-\frac 12\nb 3-\nb 5+\frac 12\nb 6+\nb 7\ra,\\
	         \la \nb 3 + \nb 4 \ra,\\
	         \la \nb 3 \ra,\\
	         \la\nb 1+\frac 12\nb 3+\nb 5\ra,\\
	         \la\frac 12\nb 3+\nb 5\ra
	    \end{array}$
	    &
	   $\begin{array}{l}
	       \la\nb 1+\af\nb 3+\bt\nb 5+\nb 6\ra_{\af,\bt\ne 0},\\
	       \la\af\nb 3+\bt\nb 5+\nb 6\ra_{\af,\bt\ne 0},\\
	       \la\nb 2+\af\nb 5+\nb 6\ra_{\af\ne\frac{1\pm\sqrt{1-4\lb}}{2\lb}},\\
	       \la\af\nb 5+\nb 6\ra,\\
	       \La \lambda\nb 2+\lambda\af\nb 4+\frac{1+\sqrt{1-4\lb}}{2}\nb 5+\lambda\nb 6\Ra,\\
	       %\La \nb 2+\af\nb 4+\frac{1+\sqrt{1-4\lb}}{2\lb}\nb 5+\nb 6\Ra,\\
	       \La\nb 2+\af\nb 4+\frac{2}{1+\sqrt{1-4\lb}}\nb 5+\nb 6\Ra,\\
	       %\La \nb 2+\af\nb 4+\frac{1+\sqrt{1-4\lb}}{2\lb}\nb 5+\nb 6\Ra,\\
	       \La\lambda \nb 4+\frac{1+\sqrt{1-4\lb}}{2}\nb 5+\lambda \nb 6\Ra,\\
	       \La\nb 4+\frac{2}{1+\sqrt{1-4\lb}}\nb 5+\nb 6\Ra,\\
	       \la \nb 2+\af\nb 3+\nb 6\ra_{\af\ne 0},\\
	       \la \af\nb 3+\nb 6\ra_{\af\ne 0},\\
	       \la\nb 1+\af\nb 2-\nb 3+\nb 6\ra,\\
	       \La \nb 1+\af\nb 3+ \frac{1+\sqrt{1-4\lb}}{2}\nb 6+\nb 7\Ra_{\af\ne 0},\\
	       \La \nb 1+\af\nb 3+ \frac{1-\sqrt{1-4\lb}}{2}\nb 6+\nb 7\Ra_{\af\ne 0},\\
	       \La \af\nb 3+ \frac{1+\sqrt{1-4\lb}}{2}\nb 6+\nb 7\Ra_{\af\ne 0},\\
	       \La \af\nb 3+ \frac{1-\sqrt{1-4\lb}}{2}\nb 6+\nb 7\Ra_{\af\ne 0},\\	       
	       \La\nb 1+\frac{1+\sqrt{1-4\lb}}2\af\nb 3+\af\nb 5+\frac{1+\sqrt{1-4\lb}}2\nb 6+\nb 7\Ra,\\
	       \La\nb 1+\frac{1-\sqrt{1-4\lb}}2\af\nb 3+\af\nb 5+\frac{1-\sqrt{1-4\lb}}2\nb 6+\nb 7\Ra,\\
	       \La\frac{1+\sqrt{1-4\lb}}2\af\nb 3+\af\nb 5+\frac{1+\sqrt{1-4\lb}}2\nb 6+\nb 7\Ra,\\
	       \La\frac{1-\sqrt{1-4\lb}}2\af\nb 3+\af\nb 5+\frac{1-\sqrt{1-4\lb}}2\nb 6+\nb 7\Ra,\\
	       \La (1-2\lb)\nb 1+ \nb 2 + \frac{-1+\sqrt{1-4\lb}}2\nb 3+\frac{\sqrt{1-4\lb}- 1}{\sqrt{1-4\lb}+ 1}\nb 5+\frac{1+\sqrt{1-4\lb}}2\nb 6+\nb 7 \Ra,\\
	       \La (1-2\lb)\lb\nabla_1 + \lb\nabla_2 -\lb\frac{1+\sqrt{1-4\lb}}2\nb 3-\Big(\frac{\sqrt{1-4\lb}+ 1}{2}\Big)^2\nb 5+\lb\frac{1-\sqrt{1-4\lb}}2\nb 6+\lb\nb 7 \Ra\\
	       %\La (1-2\lb)\nb 1+ \nb 2 -\frac{1+\sqrt{1-4\lb}}2\nb 3+\frac{\sqrt{1-4\lb}+ 1}{\sqrt{1-4\lb}- 1}\nb 5+\frac{1-\sqrt{1-4\lb}}2\nb 6+\nb 7 \Ra,\\
	       \La \frac{1+\sqrt{1-4\lb}}2\nb 1+ \nb 2 + \frac{-1+\sqrt{1-4\lb}}2\nb 3+\frac{\sqrt{1-4\lb}- 1}{\sqrt{1-4\lb}+ 1}\nb 5+\frac{1+\sqrt{1-4\lb}}2\nb 6+\nb 7 \Ra,\\
	       \La \lb\frac{1-\sqrt{1-4\lb}}2\nabla_1 + \lb\nabla_2 -\lb\frac{1+\sqrt{1-4\lb}}2\nb 3-\Big(\frac{\sqrt{1-4\lb}+ 1}{2}\Big)^2\nb 5+\lb\frac{1-\sqrt{1-4\lb}}2\nb 6+\lb\nb 7 \Ra\\
	       %\La \frac{1-\sqrt{1-4\lb}}2\nb 1+ \nb 2 -\frac{1+\sqrt{1-4\lb}}2\nb 3+\frac{\sqrt{1-4\lb}+ 1}{\sqrt{1-4\lb}- 1}\nb 5+\frac{1-\sqrt{1-4\lb}}2\nb 6+\nb 7 \Ra,\\
	       \La\frac{1+\sqrt{1-4\lb}}2\nb 1+\nb 2-\frac{1+\sqrt{1-4\lb}}2\nb 3-\nb 5+\frac{1+\sqrt{1-4\lb}}2\nb 6+\nb 7\Ra,\\
	       \La\frac{1-\sqrt{1-4\lb}}2\nb 1+\nb 2-\frac{1-\sqrt{1-4\lb}}2\nb 3-\nb 5+\frac{1-\sqrt{1-4\lb}}2\nb 6+\nb 7\Ra,\\
	       \La\frac{1-\sqrt{1-4\lb}}2\nb 1+\nb 2-\frac{1+\sqrt{1-4\lb}}2\nb 3-\nb 5+\frac{1+\sqrt{1-4\lb}}2\nb 6+\nb 7\Ra,\\
	       \La\frac{1+\sqrt{1-4\lb}}2\nb 1+\nb 2-\frac{1-\sqrt{1-4\lb}}2\nb 3-\nb 5+\frac{1-\sqrt{1-4\lb}}2\nb 6+\nb 7\Ra,\\
	       \La\nb 1-\frac{1-\sqrt{1-4\lb}}2\nb 3-\nb 5+\frac{1+\sqrt{1-4\lb}}2\nb 6+\nb 7\Ra,\\
	       \La\nb 1-\frac{1+\sqrt{1-4\lb}}2\nb 3-\nb 5+\frac{1-\sqrt{1-4\lb}}2\nb 6+\nb 7\Ra,\\
	       \La-\frac{1-\sqrt{1-4\lb}}2\nb 3-\nb 5+\frac{1+\sqrt{1-4\lb}}2\nb 6+\nb 7\Ra,\\
	       \La-\frac{1+\sqrt{1-4\lb}}2\nb 3-\nb 5+\frac{1-\sqrt{1-4\lb}}2\nb 6+\nb 7\Ra,\\
	       \la \nb 3 + \nb 4 \ra,\\
	       \la \nb 3 \ra,\\
	       \La \nb 1 + \frac{1+\sqrt{1-4\lambda}}{2}\nb 3 + \nb 5 \Ra,\\
	       \La \nb 1 + \frac{1-\sqrt{1-4\lambda}}{2}\nb 3 + \nb 5 \Ra,\\
	       \La \frac{1+\sqrt{1-4\lambda}}{2}\nb 3 + \nb 5 \Ra,\\
	       \La \frac{1-\sqrt{1-4\lambda}}{2}\nb 3 + \nb 5 \Ra.
	       \end{array}$
	    \\
	    \hline
	\end{tabular}
	}

	\end{landscape}

\newpage

Denote $\Theta=\frac{1+\sqrt{1-4\lambda}}{2}.$ The orbits above correspond to the following algebras:	

{\tiny
\[\begin{array}{lllllllllll}

        \D{4}{01}(\lambda,\alpha,\beta)&:& e_1 e_1 = \lambda e_3 + e_4, & e_1 e_3 = \alpha e_4, & e_2 e_1=e_3, & e_2 e_2 = e_3, & e_2 e_3 = \beta e_4, & e_3e_1 = e_4; \\
	    \D{4}{02}(\lambda,\alpha,\beta)&:& e_1 e_1 = \lambda e_3, & e_1 e_3 = \alpha e_4, & e_2 e_1=e_3 & e_2 e_2 = e_3, & e_2 e_3 = \beta e_4, & e_3e_1 = e_4; \\
	    \D{4}{03}(\lambda,\alpha)&:& e_1 e_1 = \lambda e_3, & e_1 e_2 = e_4, & e_2 e_1=e_3, & e_2 e_2 = e_3, & e_2 e_3 = \alpha e_4, & e_3e_1 = e_4; \\
	    \D{4}{04}(\lambda,\alpha)&:& e_1 e_1 = \lambda e_3, & e_2 e_1=e_3, & e_2 e_2 = e_3, & e_2 e_3 = \alpha e_4, & e_3e_1 = e_4; \\
	    \D{4}{05}(\lambda,\alpha)&:& e_1 e_1 = \lambda e_3, & e_1 e_2 = \lambda e_4, & e_2 e_1=e_3 + \lambda\alpha e_4, & e_2 e_2 = e_3, & e_2 e_3 = 	\Theta e_4, & e_3e_1 = \lambda e_4; \\
	    %\D{4}{05}(\lambda,\alpha)&:& e_1 e_1 = \lambda e_3, & e_1 e_2 = e_4, & e_2 e_1=e_3 + \alpha e_4, & e_2 e_2 = e_3, & e_2 e_3 = 	\frac{1+\sqrt{1-4\lb}}{2\lb} e_4, & e_3e_1 = e_4; \\
	    \D{4}{06}(\lambda,\alpha)&:& e_1 e_1 = \lambda e_3, & e_1 e_2 = e_4, & e_2 e_1=e_3 + \alpha e_4, & e_2 e_2 = e_3, & e_2 e_3 = 	\Theta^{-1} e_4, & e_3e_1 = e_4; \\
	    %\D{4}{06}(\lambda,\alpha)&:& e_1 e_1 = \lambda e_3, & e_1 e_2 = e_4, & e_2 e_1=e_3 + \alpha e_4, & e_2 e_2 = e_3, & e_2 e_3 = 	\frac{1-\sqrt{1-4\lb}}{2\lb} e_4, & e_3e_1 = e_4; \\
	    \D{4}{07}(\lambda)&:& e_1 e_1 = \lambda e_3, & e_2 e_1=e_3 + \lambda e_4, & e_2 e_2 = e_3, & e_2 e_3 = 	\Theta e_4, & e_3e_1 = \lambda e_4; \\
	    %\D{4}{07}(\lambda)&:& e_1 e_1 = \lambda e_3, & e_2 e_1=e_3 + e_4, & e_2 e_2 = e_3, & e_2 e_3 = 	\frac{1+\sqrt{1-4\lb}}{2\lb} e_4, & e_3e_1 = e_4; \\
	    \D{4}{08}(\lambda)&:& e_1 e_1 = \lambda e_3, &  e_2 e_1=e_3 + e_4, & e_2 e_2 = e_3, & e_2 e_3 = 	\Theta^{-1} e_4, & e_3e_1 = e_4; \\
	    %\D{4}{08}(\lambda)&:& e_1 e_1 = \lambda e_3, &  e_2 e_1=e_3 + e_4, & e_2 e_2 = e_3, & e_2 e_3 = 	\frac{1-\sqrt{1-4\lb}}{2\lb} e_4, & e_3e_1 = e_4; \\
	    \D{4}{09}(\lambda,\alpha)&:& e_1 e_1 = \lambda e_3, & e_1 e_2 = e_4,& e_1 e_3 = \alpha e_4, & e_2 e_1=e_3, & e_2 e_2 = e_3, & e_3e_1 = e_4; \\
	    \D{4}{10}(\lambda,\alpha)&:& e_1 e_1 = \lambda e_3,& e_1 e_3 = \alpha e_4, &  e_2 e_1=e_3, & e_2 e_2 = e_3, & e_3e_1 = e_4; \\
        \D{4}{11}(\lambda,\alpha)&:& e_1 e_1 = \lambda e_3 + e_4,& e_1e_2 = \alpha e_4, & e_1 e_3 = -e_4, &  e_2 e_1=e_3, & e_2 e_2 = e_3, & e_3e_1 = e_4; \\
        \D{4}{12}(\lambda,\alpha)&:& e_1 e_1 = \lambda e_3 + e_4,& e_1e_3 = \alpha e_4,  &  e_2 e_1=e_3, & e_2 e_2 = e_3, & e_3e_1 = \Theta e_4, & e_3e_2 = e_4; \\
        \D{4}{13}(\lambda,\alpha)&:& e_1 e_1 = \lambda e_3 + e_4,& e_1e_3 = \alpha e_4,  &  e_2 e_1=e_3, & e_2 e_2 = e_3, & e_3e_1 = (1-\Theta)e_4, & e_3e_2 = e_4; \\
        \D{4}{14}(\lambda,\alpha)&:& e_1 e_1 = \lambda e_3,& e_1e_3 = \alpha e_4,  &  e_2 e_1=e_3, & e_2 e_2 = e_3, & e_3e_1 = \Theta e_4, & e_3e_2 = e_4; \\
        \D{4}{15}(\lambda,\alpha)&:& e_1 e_1 = \lambda e_3,& e_1e_3 = \alpha e_4,  &  e_2 e_1=e_3, & e_2 e_2 = e_3, & e_3e_1 = (1-\Theta)e_4, & e_3e_2 = e_4; \\
        \D{4}{16}(\alpha)&:& e_1e_3 = \alpha e_4,  &  e_2 e_1=e_3 + e_4, & e_2 e_2 = e_3, & e_3e_1 = e_4, & e_3e_2 = e_4; \\
        \D{4}{17}(\alpha)&:& e_1 e_1 = e_4, & e_1e_3 = -e_4,  &  e_2 e_1=e_3 + \alpha e_4, & e_2 e_2 = e_3, & e_3e_1 = e_4, & e_3e_2 = e_4; \\
        \D{4}{18}(\lambda,\alpha)&:& e_1 e_1 = \lambda e_3 + e_4,& e_1e_3 = \Theta\alpha e_4,  &  e_2 e_1=e_3, & e_2 e_2 = e_3, & e_2 e_3 = \alpha e_4, & e_3e_1 = \Theta e_4,\\
        && e_3e_2 = e_4; \\
        \D{4}{19}(\lambda,\alpha)&:& e_1 e_1 = \lambda e_3 + e_4,& e_1e_3 = (1-\Theta)\alpha e_4,  &  e_2 e_1=e_3, & e_2 e_2 = e_3, & e_2 e_3 = \alpha e_4, & e_3e_1 = (1-\Theta)e_4,
        \\&& e_3e_2 = e_4; \\
        \D{4}{20}(\lambda,\alpha)&:& e_1 e_1 = \lambda e_3,& e_1e_3 = \Theta\alpha e_4,  &  e_2 e_1=e_3, & e_2 e_2 = e_3, & e_2 e_3 = \alpha e_4, & e_3e_1 = \Theta e_4,
        \\&& e_3e_2 = e_4; \\
        \D{4}{21}(\lambda,\alpha)&:& e_1 e_1 = \lambda e_3,& e_1e_3 = (1-\Theta)\alpha e_4,  &  e_2 e_1=e_3, & e_2 e_2 = e_3, & e_2 e_3 = \alpha e_4, & e_3e_1 = (1-\Theta)e_4, \\&& e_3e_2 = e_4; \\
        \D{4}{22}(\lambda)&:& e_1 e_1 = \lambda e_3 + (1-2\lambda)e_4,& e_1 e_2 = e_4,& e_1e_3 = (\Theta - 1) e_4,  &  e_2 e_1=e_3, & e_2 e_2 = e_3, & e_2 e_3 = (1-\Theta^{-1}) e_4,
        \\&& e_3e_1 = \Theta e_4, & e_3e_2 = e_4; \\
        \D{4}{23}(\lambda)&:& e_1 e_1 = \lambda e_3 + \lambda(1-2\lambda) e_4& e_1 e_2 = \lambda e_4& e_1e_3 = -\lambda\Theta e_4 &  e_2 e_1=e_3 & e_2 e_2 = e_3 & e_2 e_3 = -\Theta^2 e_4, 
        \\&& e_3e_1 = \lambda(1-\Theta)e_4 & e_3e_2 = \lambda e_4; \\
        \D{4}{24}(\lambda)&:& e_1 e_1 = \lambda e_3 + \Theta e_4,& e_1 e_2 = e_4,& e_1e_3 = (\Theta - 1) e_4,  &  e_2 e_1=e_3, & e_2 e_2 = e_3, & e_2 e_3 = (1-\Theta^{-1}) e_4, 
        \\&& e_3e_1 = \Theta e_4, & e_3e_2 = e_4; \\
        \D{4}{25}(\lambda)&:& e_1 e_1 = \lambda e_3 + \lambda(1-\Theta)e_4,& e_1 e_2 = \lambda e_4,& e_1e_3 = -\lambda\Theta e_4,  &  e_2 e_1=e_3, & e_2 e_2 = e_3, & e_2 e_3 = -\Theta^2 e_4, 
        \\&& e_3e_1 = \lambda (1-\Theta)e_4, & e_3e_2 = \lambda e_4; \\
        \D{4}{26}(\lambda)&:& e_1 e_1 = \lambda e_3 + \Theta e_4,& e_1 e_2 = e_4,& e_1e_3 = -\Theta e_4,  &  e_2 e_1=e_3, & e_2 e_2 = e_3, & e_2 e_3 = -e_4,
        \\&& e_3e_1 = \Theta e_4, & e_3e_2 = e_4; \\
        \D{4}{27}(\lambda)&:& e_1 e_1 = \lambda e_3 + (1-\Theta)e_4,& e_1 e_2 = e_4,& e_1e_3 = (\Theta-1) e_4,  &  e_2 e_1=e_3, & e_2 e_2 = e_3, & e_2 e_3 = -e_4,
        \\&& e_3e_1 = (1-\Theta)e_4, & e_3e_2 = e_4; \\
        \D{4}{28}(\lambda)&:& e_1 e_1 = \lambda e_3 + (1-\Theta)e_4,& e_1 e_2 = e_4,& e_1e_3 = -\Theta e_4,  &  e_2 e_1=e_3, & e_2 e_2 = e_3, & e_2 e_3 = -e_4, 
        \\&& e_3e_1 = \Theta e_4, & e_3e_2 = e_4; \\
        \D{4}{29}(\lambda)&:& e_1 e_1 = \lambda e_3 + \Theta e_4,& e_1 e_2 = e_4,& e_1e_3 = (\Theta-1) e_4,  &  e_2 e_1=e_3, & e_2 e_2 = e_3, & e_2 e_3 = -e_4, 
        \\&& e_3e_1 = (1-\Theta)e_4, & e_3e_2 = e_4; \\
        \D{4}{30}(\lambda)&:& e_1 e_1 = \lambda e_3 + e_4, & e_1e_3 = (\Theta-1) e_4,  &  e_2 e_1=e_3, & e_2 e_2 = e_3, & e_2 e_3 = -e_4, & e_3e_1 = \Theta e_4,
        \\&& e_3e_2 = e_4; \\
        \D{4}{31}(\lambda)&:& e_1 e_1 = \lambda e_3 + e_4, & e_1e_3 = -\Theta e_4,  &  e_2 e_1=e_3, & e_2 e_2 = e_3, & e_2 e_3 = -e_4, & e_3e_1 = (1-\Theta)e_4,
        \\&& e_3e_2 = e_4; \\
        \D{4}{32}(\lambda)&:& e_1 e_1 = \lambda e_3, & e_1e_3 = (\Theta-1) e_4,  &  e_2 e_1=e_3, & e_2 e_2 = e_3, & e_2 e_3 = -e_4, & e_3e_1 = \Theta e_4, 
        \\&& e_3e_2 = e_4; \\
        \D{4}{33}(\lambda)&:& e_1 e_1 = \lambda e_3, & e_1e_3 = -\Theta e_4,  &  e_2 e_1=e_3, & e_2 e_2 = e_3, & e_2 e_3 = -e_4, & e_3e_1 = (1-\Theta)e_4, 
        \\&& e_3e_2 = e_4; \\
       \D{4}{34}&:& e_1 e_1 = e_4, &  e_2 e_1=e_3 + e_4, & e_2 e_2 = e_3, & e_3 e_1 = e_4,& e_3 e_2 = e_4; \\
        \D{4}{35}(\lambda)&:& e_1 e_1 = \lambda e_3, & e_1e_3 = e_4,  &  e_2 e_1=e_3 + e_4, & e_2 e_2 = e_3; \\
        \D{4}{36}(\lambda)&:& e_1 e_1 = \lambda e_3, & e_1e_3 = e_4,  &  e_2 e_1=e_3, & e_2 e_2 = e_3; \\
        \D{4}{37}(\lambda)&:& e_1 e_1 = \lambda e_3 + e_4, & e_1e_3 = \Theta e_4, &  e_2 e_1=e_3, & e_2 e_2 = e_3,&  e_2 e_3 = e_4; \\
        \D{4}{38}(\lambda)&:& e_1 e_1 = \lambda e_3 + e_4, & e_1e_3 = (1-\Theta)e_4, &  e_2 e_1=e_3, & e_2 e_2 = e_3,&  e_2 e_3 = e_4; \\
        \D{4}{39}(\lambda)&:& e_1 e_1 = \lambda e_3, & e_1e_3 = \Theta e_4, &  e_2 e_1=e_3, & e_2 e_2 = e_3,&  e_2 e_3 = e_4; \\
        \D{4}{40}(\lambda)&:& e_1 e_1 = \lambda e_3, & e_1e_3 = (1-\Theta)e_4, &  e_2 e_1=e_3, & e_2 e_2 = e_3,&  e_2 e_3 = e_4.
\end{array}\]}

The algebras above are pairwise non-isomorphic, except for {\tiny \begin{gather*}
\D{4}{01}(\lambda,0,\beta) \cong \D{4}{02}(\lambda,0,\beta) \cong \D{4}{04}(\lambda,\beta),\quad \D{4}{01}(\lambda,\alpha,0)_{\alpha \neq -1} \cong \D{4}{02}(\lambda,\alpha,0) \cong \D{4}{10}(\lambda,\alpha),\quad \D{4}{01}(\lambda,-1,0) \cong \D{4}{11}(\lambda,0),\quad\\
\D{4}{03}(\lambda,0) \cong \D{4}{09}(\lambda,0),\quad \D{4}{03}\left(\lambda,(1-\Theta)^{-1}\right)_{\lambda \neq 0} \cong \D{4}{05}(\lambda,0)_{\lambda \neq 0}, \D{4}{03}\left(\lambda,\Theta^{-1}\right)\cong \D{4}{06}(\lambda,0),\quad \D{4}{04}(\lambda,0) \cong \D{4}{10}(\lambda,0),\quad\\
\D{4}{05}(1/4,\alpha) \cong \D{4}{06}(1/4,\alpha),\quad \D{4}{07}(1/4) \cong \D{4}{08}(1/4),\quad\\
\D{4}{05}(0,\alpha) \cong \D{4}{07}(0) \cong \D{4}{23}(0) \cong  \D{4}{25}(0) \cong  \D{4}{40}(0),\quad\\
\D{4}{12}(\lambda,0) \cong \D{4}{18}(\lambda,0),\quad  \D{4}{12}(1/4,\alpha) \cong \D{4}{13}(1/4,\alpha),\quad \D{4}{12}(0,\alpha)_{\alpha \neq -1} \cong \D{4}{14}(0,\alpha),\quad \D{4}{12}(0,-1) \cong \D{4}{17}(0),\quad\\
\D{4}{13}(\lambda,0) \cong \D{4}{19}(\lambda,0),\quad \D{4}{14}(\lambda,0) \cong \D{4}{20}(\lambda,0),\quad \D{4}{14}(1/4,\alpha) \cong \D{4}{15}(1/4,\alpha),\quad \D{4}{15}(\lambda,0) \cong \D{4}{21}(\lambda,0),\quad\\
\D{4}{18}(1/4,\alpha) \cong \D{4}{19}(1/4,\alpha),\quad \D{4}{18}(0,0) \cong \D{4}{22}(0) \cong  \D{4}{24}(0),\quad \D{4}{18}(1/4,-1) \cong \D{4}{19}(1/4,-1) \cong \D{4}{30}(1/4) \cong \D{4}{31}(1/4),\quad\\
\D{4}{20}(1/4,\alpha) \cong \D{4}{21}(1/4,\alpha),\quad \D{4}{20}(1/4,-1) \cong \D{4}{21}(1/4,-1) \cong \D{4}{32}(1/4) \cong \D{4}{33}(1/4),\quad\\
\D{4}{22}(1/4) \cong \D{4}{23}(1/4) \cong \D{4}{24}(1/4) \cong \D{4}{25}(1/4) \cong \D{4}{26}(1/4) \cong \D{4}{27}(1/4) \cong \D{4}{28}(1/4) \cong \D{4}{29}(1/4),\quad\\
 \D{4}{37}(1/4) \cong \D{4}{38}(1/4),\quad \D{4}{39}(1/4) \cong \D{4}{40}(1/4).
\end{gather*}}

Moreover, the algebras $\D{4}{05}(0,\alpha) \cong \D{4}{07}(0) \cong \D{4}{23}(0) \cong  \D{4}{25}(0) \cong  \D{4}{40}(0), \D{4}{38}(0)$ are Leibniz.
%\newpage

\newpage

\subsubsection{$1$-dimensional  central extensions of $\T {3*}{05}$}\label{ext-T_05^3*}
	Let us use the following notations 
	\begin{align*}
	\nb 1 = \Dl 13, \nb 2 = \Dl 21, \nb 3 = \Dl 22 - 3\Dl 31.    
	\end{align*}
	Take $\0=\sum_{i=1}^3\af_i\nb i\in {\rm H_T^2}(\T {3*}{05}).$
	If 
	$$
	\phi=
	\begin{pmatrix}
	x &    0  &  0\\
	y &  x^2  &  0\\
	z &   xy  &  x^3
	\end{pmatrix}\in\aut{\T {3*}{05}},
	$$
	then
	$$
	\phi^T\begin{pmatrix}
	0      &  0    & \af_1\\
	\af_2  & \af_3 & 0\\
	-3\af_3&  0    & 0
	\end{pmatrix} \phi=
	\begin{pmatrix}
	\af^*      &  \af^{**}    & \af^*_1\\
	\af^*_2    & \af^*_3      & 0\\
	-3\af^*_3  &  0           & 0
	\end{pmatrix},
	$$
	where
	\begin{align*}
	\af^*_1 &= \af_1x^4,\\
	\af^*_2 &= x^2(\af_2x - 2\af_3y),\\
	\af^*_3 &= \af_3x^4.
	\end{align*}
	Hence, $\phi\langle\0\rangle=\langle\0^*\rangle,$ where $\0^*=\sum\limits_{i=1}^3 \af_i^*  \nb i.$
	
	We are interested in $\0$ with $(\af_2,\af_3) \neq (0,0).$ Moreover, the condition $\theta \in \mathbf{T}_1 (\T{3}{05})$ gives us $(\alpha_1, \alpha_3) \neq (0,0).$
	
	Let $\theta$ be as above. Consider two mutually exclusive cases:
	
	\begin{enumerate}
	    \item $\alpha_3 = 0.$ The conditions above imply that $\alpha_1 \neq 0, \alpha_2 \neq 0.$
	    Then choosing $x=\frac{\af_2}{\af_1},$ we have the representative $\langle \nabla_1+\nabla_2 \rangle.$      

        \item $\alpha_3 \neq 0.$ Choosing
        $x = \frac{1}{\sqrt[4]{\af_3}}$ and $y=\frac{\af_2}{ 2 \sqrt[4]{\af_3^5}}$ we have the family of representatives  $\langle\alpha\nabla_1 + \nabla_3 \rangle.$ 
\end{enumerate}

Summarizing, we have the following distinct orbits: 
   
\[ \langle \nabla_1+\nabla_2 \rangle, \ \langle\alpha\nabla_1 + \nabla_3 \rangle.\]

They correspond to the following algebras:	        
    
\[\begin{array}{lllllllllll}

        \T{4}{37}&:& e_1e_1 = e_2,& e_1e_2 = e_3,& e_1e_3 = e_4,& e_2e_1 = e_4; \\
	    \T{4}{38}(\alpha)&:& e_1e_1 = e_2,& e_1e_2 = e_3,& e_1e_3 = \alpha e_4,& e_2e_2 = e_4,& e_3e_1 = -3e_4.
	
% govno=37б hueta=38
\end{array}\]

\subsubsection{$1$-dimensional  central extensions of $\T {3}{01}(\lambda)$}

\begin{enumerate}
    \item 	$\lambda \neq 0.$

    Let us use the following notations 
	\begin{align*}
	\nb 1 = \Dl 12, \nb 2 = (\lambda-1)\Dl 13 + 3 \Dl 31, \nb 3 = \red{\Dl 13 + \Dl 22}.    
	\end{align*}
	Take $\0=\sum_{i=1}^3\af_i\nb i\in {\rm H_T^2}(\T {3}{01}).$
	If 
	$$
	\phi=
	\begin{pmatrix}
x &               0  &  0\\
y &             x^2  &  0\\
z &   (\lambda+1)xy  &  x^3
\end{pmatrix}
\in\aut{\T {3}{01}},
	$$
	then
	$$
	\phi^T\begin{pmatrix}
	0      &  \af_1    & (\lambda-1)\af_2\red{+\af_3}\\
	0  & \red{\af_3} & 0\\
	\red{3\af_2}&  0    & 0
	\end{pmatrix} \phi=
	\begin{pmatrix}
	\alpha^*      &  \af_1^* + \lambda\alpha^{**}    & (\lambda-1)\af_2^*\red{+\af_3^*}\\
	\alpha^{**}  & \red{\af_3^*} & 0\\
	\red{3\af_2^*}&  0    & 0
	\end{pmatrix},
	$$
	where
	\begin{align*}
	%\af_1^* &= (\lambda + 1)(\lambda - 1)\af_2x^2y + ((\lambda - 1)\af_3y + \af_1x)x^2 - (3(\lambda + 1)\lambda(\af_2 -\af_3)x^2y + (\lambda - 1)\af_3x^2y)\lambda\\
	%\af^*_1 &= x^2(((\lambda + 1)(\lambda - 1)\af_2 - (\lambda-1)^2\af_3 - 3(\lambda + 1)(\af_2 -\af_3))y + \af_1x),\\
	\af^*_1 &= x^2(\af_1x+({\red 2}\af_3 - (\lb + 1)(2\lb + 1)\af_2)y),\\
	\af^*_2 &= \af_2x^4,\\
	\af^*_3 &= \af_3x^4.
	\end{align*}
	Hence, $\phi\langle\0\rangle=\langle\0^*\rangle,$ where $\0^*=\sum\limits_{i=1}^3 \af_i^*  \nb i.$
	
	The condition $\theta \in \mathbf{T}_1 (\T{3}{01}(\lambda))$ gives us ${\red(\alpha_2, \alpha_3)} \neq (0,0).$
	
		\begin{enumerate}
		\item $(\lb + 1)(2\lb + 1)\ne 0$.
		\begin{enumerate}
			\item $\af_3\ne 0$.
			\begin{enumerate}
				\item $\af_2 \ne \frac{\red 2}{(\lb + 1)(2\lb + 1)}\af_3$. Then choosing $x=\frac 1{\sqrt[4]{\af_3}}$ and $y=-\frac{\af_1x}{{\red 2}\af_3 - (\lb + 1)(2\lb + 1)\af_2}$, we obtain the family of representatives of distinct orbits
				$\la\af\nb 2+\nb 3\ra$, where $\af\ne \frac{\red 2}{(\lb + 1)(2\lb + 1)}$.
				\item $\af_2=\frac{\red 2}{(\lb + 1)(2\lb + 1)}\af_3$. Then we have two representatives 
				$\La\frac{\red 2}{(\lb + 1)(2\lb + 1)}\nb 2+\nb 3\Ra$ and $\La\nb 1+\frac{\red 2}{(\lb + 1)(2\lb + 1)}\nb 2+\nb 3\Ra$ depending on whether $\af_1=0$ or not. For convenience, we shall join the representative $\La\frac{\red 2}{(\lb + 1)(2\lb + 1)}\nb 2+\nb 3\Ra$ with the family $\la\af\nb 2+\nb 3\ra_{\af\ne \frac{\red 2}{(\lb + 1)(2\lb + 1)}}$ found above.  
			\end{enumerate}
			\item $\af_3=0$. Then $\af_2\ne 0$. Choosing $y=\frac{\af_1x}{(\lb + 1)(2\lb + 1)\af_2}$, we obtain the representative $\la\nb 2\ra$.
		\end{enumerate}

	\item  $(\lb + 1)(2\lb + 1)=0$. 
		\begin{enumerate}
			\item $\af_3\ne 0$. Then choosing $x=\frac 1{\sqrt[4]{\af_3}}$ and $y=-\frac{\af_1x}{{\red 2}\af_3}$, we obtain the family of representatives of distinct orbits $\la\af\nb 2+\nb 3\ra$.
			\item $\af_3=0$. Then $\af_2\ne 0$. So, we obtain two representatives $\la\nb 2\ra$ and $\la\nb 1+\nb 2\ra$ depending on whether $\af_1=0$ or not.
		\end{enumerate}

	\end{enumerate}

Summarizing, in the case (1) we have the following distinct orbits: 
$$
     \la\nb 1+\nb 2\ra_{\lb\in\{-1,-\frac 12\}},
    \left\la\nb 1+\frac{\red 2}{(\lb + 1)(2\lb + 1)}\nb 2+\nb 3\right\ra_{\lb\not\in\{-1,-\frac 12,0\}},
     \la\nb 2\ra_{\lb\ne 0},
    \la\af\nb 2+\nb 3\ra_{\lb\ne 0}.
$$

%\[ \la\af\nb 2+\nb 3\ra, \la\nb 2\ra, \
%\la\af\nb 1+\frac{2\lb^2  + 5\lb  - 1}{(\lb + 1)(2\lb + 1)}\nb 2+\nb 3\ra_{\af\neq 0, \lb \neq -1,-1/2}, \
%\la\af\nb 1+\nb 2\ra_{\af\neq 0,\lb=-1 \mbox{ or } \lb=-1/2}. \]

%Entonces, después de un cambio de la base, puede recibir

%\[ \la\af\nb 2+\nb 3\ra, \la\af\nb 1+\nb 2\ra. \]

	\item $\lb=0.$ 
	Let us use the following notations 
	\begin{align*}
	\nb 1 = \Dl 12, \nb 2 = -\Dl 13 + 3 \Dl 31, \nb 3 = {\red\Dl 13 + \Dl 22}, \nb 4 = \Dl 23.    
	\end{align*}
	Take $\0=\sum_{i=1}^4\af_i\nb i\in {\rm H_T^2}(\T {3}{01}(0)).$
	If 
	$$
	\phi=
	\begin{pmatrix}
    x &               0  &  0\\
    y &             x^2  &  0\\
    z &   xy  &  x^3
    \end{pmatrix}
\in\aut{\T {3}{01}(0)},
	$$
	then
	$$
	\phi^T\begin{pmatrix}
	0      &  \af_1    & -\af_2{\red+\af_3}\\
	0  & \red{\af_3} & \af_4\\
	{\red 3\af_2}&  0    & 0
	\end{pmatrix} \phi=
	\begin{pmatrix}
	\alpha^*      &  \af^*_1    & -\af^*_2{\red+\af_3^*}\\
	\alpha^{**}  & {\red\af^*_3} & \af^*_4\\
	{\red 3\af^*_2}&  0    & 0
	\end{pmatrix},
	$$
	where
	\begin{align*}
	\af^*_1 &= x (x^2 \af_1 - x y (\af_2 {\red-2}\af_3) + y^2 \af_4),\\
    \af^*_2 &= {\red \af_2 x^4},\\
    \af^*_3 &= x^3 (x \af_3 {\red+} y \af_4),\\
	\af^*_4 &= \af_4x^5.
	\end{align*}
	Hence, $\phi\langle\0\rangle=\langle\0^*\rangle,$ where $\0^*=\sum\limits_{i=1}^4 \af_i^*  \nb i.$
	
	The condition $\theta \in \mathbf{T}_1 (\T{3}{01}(0))$ gives us $(\alpha_2, {\red\alpha_3},\af_4) \neq (0,0,0).$

	\begin{enumerate}
	    \item $\alpha_4\neq0$. Then choosing $y={\red-}\frac{\af_3x}{\alpha_4}$ we have the family of representatives 
	     $\langle \alpha_1^{\star} \nabla_1 + {\red\alpha_2^*} \nabla_2+ \af^*_4\nabla_4 \rangle$,
	     where
	     \begin{align*}
	        \af^\star_1=& \frac{x^3}{\af_4}(\af_1\af_4{\red+}\af_2\af_3{\red-\af_3^2}).
	     \end{align*}
	     
	     \begin{enumerate}
	         \item ${\red\af_2}\ne 0$. Then we have the family of representatives of distinct orbits $\langle  \af \nabla_1+ \nabla_2+ \nabla_4 \rangle$.
	         
	         \item ${\red\af_2}=0$. Then we have two representatives  $\langle   \nabla_4 \rangle$ and $\langle \nabla_1+ \nabla_4 \rangle$ depending on whether $\af_1\af_4{\red+}\af_2\af_3{\red-\af_3^2}=0$ or not.
    
	     \end{enumerate}

	    \item $\alpha_4=0.$ 
	    \begin{enumerate}
	        \item $\af_2 {\red-2}\af_3\ne 0$. Then choosing $y = \frac{\af_1x}{\af_2 {\red-2}\af_3}$ we have the representative $\la\nb 2\ra$ and the family of representatives of distinct orbits $\la\af\nb 2+\nb 3\ra_{\af\ne{\red 2}}$ depending on whether $\af_3=0$ or not.
	        \item $\af_2 {\red-2}\af_3=0$. Then we have two representatives $\la{\red 2}\nb 2+\nb 3\ra$ and $\la\nb 1{\red +2}\nb 2+\nb 3\ra$ depending on whether $\af_1=0$ or not. The representative $\la{\red 2}\nb 2+\nb 3\ra$ will be joined with the family $\la\af\nb 2+\nb 3\ra_{\af\ne{\red 2}}$.
	    \end{enumerate}
	    \end{enumerate}
	    
Summarizing, in the case (2)  we have the following distinct orbits: 
$$ 
\la\nb 1{\red +2}\nb 2+\nb 3\ra_{\lb=0}, \langle  \af \nabla_1+ \nabla_2+ \nabla_4 \rangle_{\lb=0}, \langle \nabla_1+ \nabla_4 \rangle_{\lb=0}, \la\nb 2\ra_{\lb=0}, \la\af\nb 2+\nb 3\ra_{\lb=0}, \langle   \nabla_4 \rangle_{\lb=0}.
$$
\end{enumerate}
Now, taking into account the both cases (1) and (2), we have the following distinct orbits:

$
\begin{array}{l}
     \la\nb 1+\nb 2\ra_{\lb\in\{-1,-\frac 12\}},\\ 
     \left\la\nb 1+\frac{\red 2}{(\lb + 1)(2\lb + 1)}\nb 2+\nb 3\right\ra_{\lb\not\in\{-1,-\frac 12\}},
\end{array}
\begin{array}{l}
    \langle  \af \nabla_1+ \nabla_2+ \nabla_4 \rangle_{\lb=0},\\
    \langle \nabla_1+ \nabla_4 \rangle_{\lb=0},\\
\end{array}
\begin{array}{l}
    \la\nb 2\ra,\\
    \la\af\nb 2+\nb 3\ra,
\end{array}
\begin{array}{l}
    \langle \nabla_4 \rangle_{\lb=0}.
\end{array}
$

Observe that $\left\la\nb 1+\frac{\red 2}{(\lb + 1)(2\lb + 1)}\nb 2+\nb 3\right\ra=\la(\lb + 1)(2\lb + 1)\nb 1+{\red 2}\nb 2+(\lb + 1)(2\lb + 1)\nb 3\ra$, the latter being $\la\nb 2\ra$, when $\lb\in\{-1,-\frac 12\}$. Now, if $\lb\not\in\{-1,-\frac 12\}$, then $\orb\la\nb 2\ra=\orb\la\nb 1+\nb 2\ra$. Thus, we may reorganize our orbits as follows:

$
\begin{array}{l}
     \la\nb 1+\nb 2\ra,\\ 
     \la(\lb + 1)(2\lb + 1)\nb 1+{\red 2}\nb 2+(\lb + 1)(2\lb + 1)\nb 3\ra,
\end{array}
\begin{array}{l}
    \langle  \af \nabla_1+ \nabla_2+ \nabla_4 \rangle_{\lb=0},\\
    \langle \nabla_1+ \nabla_4 \rangle_{\lb=0},
\end{array}
\begin{array}{l}
     \la\af\nb 2+\nb 3\ra,\\
     \la\nb 4 \ra_{\lb=0}.
\end{array}
$

The corresponding algebras are:	    

%$$    
%\begin{array}{lllllllll}
%        \T 4{**}(\lb, \alpha)   &:& e_1 e_1 = e_2, & e_1 e_2=\lb e_3+\af e_4, & e_2 e_1=e_3,& e_1e_3=-e_4,&e_3e_1=3e_4 \\   
%        \T 4{**}(\lb, \alpha)   &:& e_1 e_1 = e_2, & e_1 e_2=\lb e_3, & e_2 e_1=e_3,& e_1e_3=-\af e_4,& e_2e_2=-e_4, &e_3e_1=3(\af-1) e_4 \\   
%        \T 4{**}(\alpha)  &:& e_1 e_1 = e_2,& e_1e_3=-\af e_4, & e_2 e_1=e_3,& e_2e_2=-e_4,& e_2e_3=e_4,& e_3e_1= 3(\af-1) e_4 \\
%        \T 4{**}   &:& e_1 e_1 = e_2,& e_1e_3=- e_4, & e_2 e_1=e_3,& e_2e_3=e_4,& e_3e_1=3 e_4\\
%        \T 4{**}   &:& e_1 e_1 = e_2,& e_2 e_1=e_3,& e_2e_3=e_4 
%\end{array}
%$$

$$
\begin{array}{lllllllll}
%bunda-39б pizdes-40б jopa-41 popka-42 MJ-43 popochka-44
     \T 4{39}(\lb)   &:& e_1 e_1 = e_2, & e_1 e_2=\lb e_3+e_4, & e_1e_3 = (\lb-1)e_4,\\
     && e_2 e_1=e_3, & e_3e_1 = 3e_4;\\ 
     
     \T 4{40}(\lb)   &:& e_1 e_1 = e_2, & e_1 e_2=\lb e_3+(\lb + 1)(2\lb + 1)e_4, & e_1e_3 = {\red(2\lb^2  + 5\lb  - 1)}e_4,\\
     && e_2 e_1=e_3, & e_2e_2={\red(\lb + 1)(2\lb + 1)}e_4, & e_3e_1 = {\red 6}e_4;\\ 
     
     \T 4{41}(\af)   &:& e_1 e_1 = e_2, & e_1 e_2=\af e_4, & e_1e_3=-e_4,\\
     && e_2 e_1=e_3, & e_2e_3 = e_4, & e_3e_1=3e_4;\\
     
     \T 4{42}   &:& e_1 e_1 = e_2, & e_1 e_2=e_4, & e_2 e_1=e_3, & e_2e_3 = e_4;\\
     
     \T 4{43}(\lb,\af)   &:& e_1 e_1 = e_2, & e_1 e_2=\lb e_3, & e_1e_3 = (\af(\lb-1){\red+1})e_4,\\
     && e_2 e_1=e_3, & e_2e_2={\red e_4}, & e_3e_1 = 3{\red\af} e_4;\\ 
     
     \T 4{44}   &:& e_1 e_1 = e_2, & e_2 e_1=e_3,    & e_2e_3 = e_4.
\end{array}
$$

 \section{The geometric classification of nilpotent terminal algebras}

\subsection{Degenerations of algebras}
Given an $n$-dimensional vector space ${\bf V}$, the set ${\rm Hom}({\bf V} \otimes {\bf V},{\bf V}) \cong {\bf V}^* \otimes {\bf V}^* \otimes {\bf V}$ 
is a vector space of dimension $n^3$. This space inherits the structure of the affine variety $\mathbb{C}^{n^3}.$ 
Indeed, let us fix a basis $e_1,\dots,e_n$ of ${\bf V}$. Then any $\mu\in {\rm Hom}({\bf V} \otimes {\bf V},{\bf V})$ is determined by $n^3$ structure constants $c_{i,j}^k\in\mathbb{C}$ such that
$\mu(e_i\otimes e_j)=\sum_{k=1}^nc_{i,j}^ke_k$. A subset of ${\rm Hom}({\bf V} \otimes {\bf V},{\bf V})$ is {\it Zariski-closed} if it can be defined by a set of polynomial equations in the variables $c_{i,j}^k$ ($1\le i,j,k\le n$).

The general linear group ${\rm GL}({\bf V})$ acts by conjugation on the variety ${\rm Hom}({\bf V} \otimes {\bf V},{\bf V})$ of all algebra structures on ${\bf V}$:
$$ (g * \mu )(x\otimes y) = g\mu(g^{-1}x\otimes g^{-1}y),$$ 
for $x,y\in {\bf V}$, $\mu\in {\rm Hom}({\bf V} \otimes {\bf V},{\bf V})$ and $g\in {\rm GL}({\bf V})$. Clearly, the ${\rm GL}({\bf V})$-orbits correspond to the isomorphism classes of algebras structures on ${\bf V}$. Let $T$ be a set of polynomial identities which is invariant under isomorphism. Then the subset $\mathbb{L}(T)\subset {\rm Hom}({\bf V} \otimes {\bf V},{\bf V})$ of the algebra structures on ${\bf V}$ which satisfy the identities in $T$ is ${\rm GL}({\bf V})$-invariant and Zariski-closed. It follows that $\mathbb{L}(T)$ decomposes into ${\rm GL}({\bf V})$-orbits. The ${\rm GL}({\bf V})$-orbit of $\mu\in\mathbb{L}(T)$ is denoted by $O(\mu)$ and its Zariski closure by $\overline{O(\mu)}$.

Let ${\bf A}$ and ${\bf B}$ be two $n$-dimensional algebras satisfying the identities from $T$ and $\mu,\lambda \in \mathbb{L}(T)$ represent ${\bf A}$ and ${\bf B}$ respectively.
We say that ${\bf A}$ {\it degenerates} to ${\bf B}$ and write ${\bf A}\to {\bf B}$ if $\lambda\in\overline{O(\mu)}$.
Note that in this case we have $\overline{O(\lambda)}\subset\overline{O(\mu)}$. Hence, the definition of a degeneration does not depend on the choice of $\mu$ and $\lambda$. It is easy to see that any algebra degenerates to the algebra with zero multiplication. If ${\bf A}\to {\bf B}$ and ${\bf A}\not\cong {\bf B}$, then ${\bf A}\to {\bf B}$ is called a {\it proper degeneration}. We write ${\bf A}\not\to {\bf B}$ if $\lambda\not\in\overline{O(\mu)}$ and call this a {\it non-degeneration}. Observe that the dimension of the subvariety $\overline{O(\mu)}$ equals $n^2-\dim\der({\bf A})$. Thus if ${\bf A}\to {\bf B}$ is a proper degeneration, then we must have $\dim\der({\bf A})>\dim\der({\bf B})$.

Let ${\bf A}$ be represented by $\mu\in\mathbb{L}(T)$. Then  ${\bf A}$ is  {\it rigid} in $\mathbb{L}(T)$ if $O(\mu)$ is an open subset of $\mathbb{L}(T)$.
Recall that a subset of a variety is called {\it irreducible} if it cannot be represented as a union of two non-trivial closed subsets. A maximal irreducible closed subset of a variety is called an {\it irreducible component}.
It is well known that any affine variety can be represented as a finite union of its irreducible components in a unique way.
The algebra ${\bf A}$ is rigid in $\mathbb{L}(T)$ if and only if $\overline{O(\mu)}$ is an irreducible component of $\mathbb{L}(T)$.

In the present work we use the methods applied to Lie algebras in \cite{GRH,GRH2}.
To prove %primary 
degenerations, we will construct families of matrices parametrized by $t$. Namely, let ${\bf A}$ and ${\bf B}$ be two algebras represented by the structures $\mu$ and $\lambda$ from $\mathbb{L}(T)$, respectively. Let $e_1,\dots, e_n$ be a basis of ${\bf V}$ and $c_{i,j}^k$ ($1\le i,j,k\le n$) be the structure constants of $\lambda$ in this basis. If there exist $a_i^j(t)\in\mathbb{C}$ ($1\le i,j\le n$, $t\in\mathbb{C}^*$) such that the elements $E_i^t=\sum_{j=1}^na_i^j(t)e_j$ ($1\le i\le n$) form a basis of ${\bf V}$ for any $t\in\mathbb{C}^*$, and the structure constants $c_{i,j}^k(t)$ of $\mu$ in the basis $E_1^t,\dots, E_n^t$ satisfy $\lim\limits_{t\to 0}c_{i,j}^k(t)=c_{i,j}^k$, then ${\bf A}\to {\bf B}$. In this case  $E_1^t,\dots, E_n^t$ is called a {\it parametric basis} for ${\bf A}\to {\bf B}$.

To prove a non-degeneration ${\bf A}\not\to {\bf B}$ we will use the following lemma (see \cite{GRH}).

\begin{lemma}\label{main}
Let $\mathcal{B}$ be a Borel subgroup of ${\rm GL}({\bf V})$ and $\mathcal{R}\subset \mathbb{L}(T)$ be a $\mathcal{B}$-stable closed subset.
If ${\bf A} \to {\bf B}$ and ${\bf A}$ can be represented by $\mu\in\mathcal{R}$ then there is $\lambda\in \mathcal{R}$ that represents ${\bf B}$.
\end{lemma}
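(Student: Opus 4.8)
The plan is to deduce the statement from the standard completeness property of the flag variety $G/\mathcal{B}$, where $G = {\rm GL}({\bf V})$. The key geometric input I would isolate first is the following: for any closed $\mathcal{B}$-stable subset $Y \subseteq \mathbb{L}(T)$, the saturation $G \cdot Y = \{g * z : g \in G,\ z \in Y\}$ is again closed. Granting this, the rest of the proof is short and formal.

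To establish the key input, I would form the product $\mathbb{L}(T) \times G/\mathcal{B}$ with the diagonal $G$-action and consider the subset $Z = \{(\nu, g\mathcal{B}) : g^{-1} * \nu \in Y\}$. This set is well-defined because $Y$ is $\mathcal{B}$-stable, so the condition $g^{-1}*\nu \in Y$ does not depend on the chosen coset representative $g$; and it is closed because locally on $G/\mathcal{B}$ one may lift $g\mathcal{B}$ to $g$, after which the condition cuts out a closed set, since $Y$ is closed. As $G/\mathcal{B}$ is complete, the projection $\mathbb{L}(T)\times G/\mathcal{B}\to\mathbb{L}(T)$ is a closed map, and its image of $Z$ is exactly $\{\nu : \exists g,\ g^{-1}*\nu\in Y\} = G\cdot Y$. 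Hence $G\cdot Y$ is closed. Applying this to $Y = \overline{\mathcal{B}*\mu}$ and using that $\overline{O(\mu)}$ is $G$-stable and closed, one obtains the orbit-closure identity $\overline{O(\mu)} = G\cdot\overline{\mathcal{B}*\mu}$.

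With this identity in hand I would finish as follows. By hypothesis ${\bf A}$ is represented by some $\mu\in\mathcal{R}$, and since $\mathcal{R}$ is closed, $\mathcal{B}$-stable, and contains $\mu$, we have $\overline{\mathcal{B}*\mu}\subseteq\mathcal{R}$. The degeneration ${\bf A}\to{\bf B}$ means precisely that some representative $\lambda$ of ${\bf B}$ lies in $\overline{O(\mu)} = G\cdot\overline{\mathcal{B}*\mu}\subseteq G\cdot\mathcal{R}$. Thus $\lambda = g * \nu$ for some $g\in G$ and $\nu\in\mathcal{R}$; then $\nu = g^{-1}*\lambda$ lies in the $G$-orbit of $\lambda$ and therefore represents ${\bf B}$, while simultaneously $\nu\in\mathcal{R}$. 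Taking this $\nu$ as the required structure completes the argument.

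The main obstacle is the closedness of $G\cdot Y$; everything else is formal manipulation of orbit closures. That closedness is exactly where the hypothesis that $\mathcal{B}$ is a \emph{Borel} subgroup enters essentially: it is the completeness (projectivity) of $G/\mathcal{B}$ that makes the projection along the second factor a closed map, and the argument breaks down for a non-parabolic subgroup. I would take care to state precisely the version of the ``projection along a complete variety is closed'' theorem being invoked, and to verify that $Z$ is genuinely closed, since the $\mathcal{B}$-invariance of $Y$ is used there in a way that is easy to gloss over.
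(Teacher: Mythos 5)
Your argument is correct and follows essentially the same route as the paper's source: the paper states this lemma without proof, citing Grunewald--O'Halloran \cite{GRH}, and your derivation of the identity $\overline{O(\mu)}=G\cdot\overline{\mathcal{B}*\mu}$ from the completeness of $G/\mathcal{B}$ (via the incidence set $Z$ in $\mathbb{L}(T)\times G/\mathcal{B}$ and the closedness of projection along a complete variety), followed by the formal orbit-closure manipulation, is precisely the standard argument given there. I see no gaps.
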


In particular, it follows from Lemma \ref{main} that ${\bf A}\not\to {\bf B}$, whenever $\dim({\bf A}^2)<\dim({\bf B}^2)$.
%Each time when we will need to prove some primary non-degeneration $\mu\not\to\lambda$, we will define $\mathcal{R}$ by a set of polynomial equations in structure constants $c_{ij}^k$ in such a way that the structure constants of $\mu$ in the basis $e_1,\dots,e_n$ satisfy these equations. We will omit everywhere the verification of the fact that $\mathcal{R}$ is stable under the action of the subgroup of lower triangular matrices and of the fact that $\lambda\not\in\mathcal{R}$ for any choice of a basis of ${\bf V}.$ 
%To simplify our equations, we will use the notation $A_i=\langle e_i,\dots,e_n\rangle,\ i=1,\ldots,n$ and write simply $A_pA_q\subset A_r$ instead of $c_{ij}^k=0$ ($i\geq p$, $j\geq q$, $k\geq r$).

When the number of orbits under the action of ${\rm GL}({\bf V})$ on  $\mathbb{L}(T)$ is finite, the graph of primary degenerations gives the whole picture. In particular, the description of rigid algebras and irreducible components can be easily obtained. Since the variety of $4$-dimensional nilpotent terminal algebras contains infinitely many non-isomorphic algebras, we have to fulfill some additional work. Let ${\bf A}(*):=\{{\bf A}(\alpha)\}_{\alpha\in I}$ be a family of algebras and ${\bf B}$ be another algebra. Suppose that, for $\alpha\in I$, ${\bf A}(\alpha)$ is represented by a structure $\mu(\alpha)\in\mathbb{L}(T)$ and ${\bf B}$ is represented by a structure $\lambda\in\mathbb{L}(T)$. Then by ${\bf A}(*)\to {\bf B}$ we mean $\lambda\in\overline{\cup\{O(\mu(\alpha))\}_{\alpha\in I}}$, and by ${\bf A}(*)\not\to {\bf B}$ we mean $\lambda\not\in\overline{\cup\{O(\mu(\alpha))\}_{\alpha\in I}}$.

Let ${\bf A}(*)$, ${\bf B}$, $\mu(\alpha)$ ($\alpha\in I$) and $\lambda$ be as above. To prove ${\bf A}(*)\to {\bf B}$ it is enough to construct a family of pairs $(f(t), g(t))$ parametrized by $t\in\mathbb{C}^*$, where $f(t)\in I$ and $g(t)=\left(a_i^j(t)\right)_{i,j}\in {\rm GL}({\bf V})$. Namely, let $e_1,\dots, e_n$ be a basis of ${\bf V}$ and $c_{i,j}^k$ ($1\le i,j,k\le n$) be the structure constants of $\lambda$ in this basis. If we construct $a_i^j:\mathbb{C}^*\to \mathbb{C}$ ($1\le i,j\le n$) and $f: \mathbb{C}^* \to I$ such that $E_i^t=\sum_{j=1}^na_i^j(t)e_j$ ($1\le i\le n$) form a basis of ${\bf V}$ for any  $t\in\mathbb{C}^*$, and the structure constants $c_{i,j}^k(t)$ of $\mu\big(f(t)\big)$ in the basis $E_1^t,\dots, E_n^t$ satisfy $\lim\limits_{t\to 0}c_{i,j}^k(t)=c_{i,j}^k$, then ${\bf A}(*)\to {\bf B}$. In this case, $E_1^t,\dots, E_n^t$ and $f(t)$ are called a {\it parametric basis} and a {\it parametric index} for ${\bf A}(*)\to {\bf B}$, respectively. In the construction of degenerations of this sort, we will write $\mu\big(f(t)\big)\to \lambda$, emphasizing that we are proving the assertion $\mu(*)\to\lambda$ using the parametric index $f(t)$.

%To prove an assertion of the form ${\bf A}(*)\not\to {\bf B}$, one can use the fact that if $\bf{C}\to \bf{A}(\alpha)$ for any $\alpha\in I$ and $C\not\to \bf{B}$, then ${\bf A}(*)\not\to {\bf B}$.

Through a series of degenerations summarized in the table below by the corresponding parametric bases and indices, we obtain the main result of the second part of the paper.

\begin{theorem}\label{main-geo}
The variety of $4$-dimensional complex nilpotent terminal algebras has 3 irreducible components: one of dimension $17$ determined by the family of algebras $\D 4{01}(\lb,\af,\bt)$ and two of dimension 15 determined by the families of algebras $\T 4{41}(\af)$ and $\T 4{43}(\lb,\af)$. 
\end{theorem}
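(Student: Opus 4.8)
\section*{Proof proposal for Theorem \ref{main-geo}}

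The plan is to realize the variety $\mathbb{L}$ of $4$-dimensional nilpotent terminal algebras as the union of the ${\rm GL}_4$-orbit closures of the algebras classified in Theorem \ref{main-alg}, and then to single out the maximal irreducible closed subsets among these closures. Since the Zariski closure of the union of orbits over an irreducible parameter base is itself irreducible and closed, each of the three candidate sets
$\mathcal{C}_1=\overline{\bigcup_{\lb,\af,\bt}O(\D 4{01}(\lb,\af,\bt))}$,
$\mathcal{C}_2=\overline{\bigcup_{\af}O(\T 4{41}(\af))}$ and
$\mathcal{C}_3=\overline{\bigcup_{\lb,\af}O(\T 4{43}(\lb,\af))}$
is automatically an irreducible closed subset of $\mathbb{L}$. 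The real content is therefore to (i) compute the three dimensions, (ii) prove $\mathcal{C}_1\cup\mathcal{C}_2\cup\mathcal{C}_3=\mathbb{L}$, and (iii) prove that no one of the three is contained in another, so that each is a genuine irreducible component. Once (ii) holds, any irreducible component, being contained in the union of three irreducible sets, must lie in one $\mathcal{C}_i$; hence the components are exactly the maximal members of $\{\mathcal{C}_1,\mathcal{C}_2,\mathcal{C}_3\}$, and (iii) guarantees all three are maximal.

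For the dimensions I would compute $\dim\der(\mathbf A)$ for a generic member of each family and apply $\dim O(\mu)=16-\dim\der(\mathbf A)$; the dimension of $\mathcal{C}_i$ is then the generic orbit dimension increased by the number of essential parameters. One expects generic $\dim\der=2$ for $\D 4{01}$, giving $14+3=17$, and for $\T 4{41}$, giving $14+1=15$, whereas $\T 4{43}$ should have generic $\dim\der=3$, giving $13+2=15$; these are routine linear-algebra computations on the derivation equations. Step (ii) is the main bulk of work: for each remaining algebra $\T 4{01}$--$\T 4{44}$ and $\D 4{02}$--$\D 4{40}$ I must exhibit a degeneration from one of the three families, i.e.\ a parametric basis and, for the families, a parametric index in the sense described before Lemma \ref{main}. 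This is a large but essentially mechanical task, and the resulting parametric bases and indices are what the promised degeneration table records. The efficient organizing principle is to degenerate the three top families to a few intermediate algebras of lower derivation rank and then chain downward to $\Nl_4$, using transitivity of $\to$.

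Step (iii) is where the genuine difficulty lies. The $17$-dimensional $\mathcal{C}_1$ cannot sit inside either $15$-dimensional set by a dimension count, so it remains to establish the non-degenerations $\D 4{01}(*)\not\to\T 4{41}(\af)$ and $\D 4{01}(*)\not\to\T 4{43}(\lb,\af)$ for generic parameters, together with the two mutual non-containments between $\mathcal{C}_2$ and $\mathcal{C}_3$. For each of these I would apply Lemma \ref{main}: produce a Borel-stable closed subset $\mathcal{R}\subset\mathbb{L}$ containing the source family but missing the generic target, typically cut out by rank or semicontinuity conditions on the structure constants, such as the values of $\dim\mathbf A^2$, $\dim\mathbf A^3$, $\dim\operatorname{Ann}(\mathbf A)$, or the rank of suitable contractions of $\mu$; recall in particular that $\dim\mathbf A^2<\dim\mathbf B^2$ already forbids $\mathbf A\to\mathbf B$. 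The hard part will be separating $\mathcal{C}_2$ from $\mathcal{C}_3$, which share both dimension and the coarser invariants: here I expect to need a more delicate Borel-stable condition that isolates the distinct leading-term behaviour of $\T 4{41}$ from that of $\T 4{43}$ (for instance a rank condition on the left-multiplication operators restricted to $\mathbf A^2$). Finally, since each component is the closure of a positive-dimensional family rather than of a single orbit, no orbit is open, which yields the accompanying conclusion that there are no rigid algebras in the variety.
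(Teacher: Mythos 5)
Your overall strategy --- dimension counts for the three closures, a table of explicit degenerations onto everything else, and $\dim\mathbf A^2$ together with a crude dimension comparison for the mutual non-containments --- is the same as the paper's, and those parts are sound. However, there is a genuine gap in your step (ii): Theorem \ref{main-alg} classifies only the \emph{non-Leibniz} nilpotent terminal algebras, whereas every $4$-dimensional nilpotent (left) Leibniz algebra is also terminal and hence lies in $\mathbb{L}(T)$. Exhibiting degenerations onto $\T 4{01}$--$\T 4{44}$ and $\D 4{02}$--$\D 4{40}$ therefore does not prove $\mathcal{C}_1\cup\mathcal{C}_2\cup\mathcal{C}_3=\mathbb{L}(T)$: the entire Leibniz subvariety is unaccounted for, and a priori it could contribute further irreducible components. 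The paper closes this hole by quoting the geometric classification of $4$-dimensional nilpotent Leibniz algebras from \cite{kppv}: that (closed) subvariety is the union of the orbit closures of $\mathfrak{N}_3(\alpha)$, $\mathfrak{L}_2$, $\mathfrak{L}_5$ and $\mathfrak{L}_{11}$, so it suffices to show that each of these four degenerates from one of the three terminal families (these are exactly the first four rows of the degeneration table). You need either this citation or some equivalent treatment of the Leibniz part before the union claim is established.

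A smaller point, in your favour: the separation of $\mathcal{C}_2$ from $\mathcal{C}_3$, which you flag as the hard part, is in fact nearly automatic. Both are irreducible and closed of dimension $15$, so a proper containment is impossible, and equality is excluded because the generic orbit of $\T 4{41}(\af)$ has dimension $14$ (one parameter, $\dim\der=2$) while that of $\T 4{43}(\lb,\af)$ has dimension $13$ (two parameters, $\dim\der=3$); equivalently, upper semicontinuity of $\dim\der$ prevents a $14$-dimensional orbit from lying in $\mathcal{C}_3$. No bespoke Borel-stable invariant on left multiplications is needed. The paper does not even spell this out, relying implicitly on the derivation computations; your remaining non-degeneration arguments ($17>15$ to exclude $\mathcal{C}_1$ from the other two closures, and $\dim\mathbf A^2=2<3$ to exclude $\D 4{01}(\lb,\af,\bt)\to\T 4{41}(\af)$ and $\D 4{01}(\lb,\af,\bt)\to\T 4{43}(\lb,\af)$) coincide exactly with the paper's.
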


\begin{proof}[{\bf Proof}]
Thanks to \cite{kppv} the algebras 
$\mathfrak{L}_2,$ $\mathfrak{L}_5,$ $\mathfrak{L}_{11}$ and $\mathfrak{N}_3(\alpha)$  
 define irreducible components in the variety of $4$-dimensional nilpotent Leibniz  algebras. 
 Note that in \cite{kppv} right Leibniz algebras were considered, and here we use their opposite versions which are left Leibniz algebras (and hence are terminal):
$$ 
    \begin{array}{lllllllll}
    
\mathfrak{N}_3(\alpha) & :&  e_1e_1 = e_4 &  e_1e_2 = -\alpha e_4 & e_2e_1 = \alpha e_4 & e_2e_2 = e_4&  e_3e_3 = e_4  \\

\mathfrak{L}_2  &: &  e_1e_1 = e_2 & e_1e_2 = e_3 &  e_1e_3 = e_4 \\  

\mathfrak{L}_5  &:&    e_1e_1 = e_3&   e_1e_2 = e_3& e_2e_2 = e_4&  e_1e_3=e_4 \\  

\mathfrak{L}_{11}  &:&  e_1e_1 = e_4& e_1e_2 = e_3 &  e_1e_3=e_4 & e_2e_1=-e_3 &   e_2e_2=e_4 & e_3e_1=-e_4.    \\ 
\end{array}
$$
 The list of all $4$-dimensional nilpotent non-Leibniz terminal  algebras was found in Theorem~\ref{main-alg}. 
 All these algebras  degenerate from one of the families: $\D 4{01}(\lb,\af,\bt)$, $\T 4{41}(\af)$ or $\T 4{43}(\lb,\af)$, as it is shown in the table below. By a straightforward computation we have $\dim\der(\D 4{01}(\lb,\af,\bt))=2$ for $\af,\bt\ne 0$, and $\dim\der(\D 4{01}(\lb,\af,\bt))>2$ otherwise. Hence, the closure of the orbit of $\D 4{01}(\lb,\af,\bt)$ has dimension $4^2-2+3=17$. A similar argument yields that the dimensions of the orbit closures of $\T 4{41}(\af)$ and $\T 4{43}(\lb,\af)$ are both equal to $15<17$. In particular, this shows that $\D 4{01}(\lb,\af,\bt)$ cannot degenerate from $\T 4{41}(\af)$ or $\T 4{43}(\lb,\af)$. Moreover, $\T 4{41}(\af)$ and $\T 4{43}(\lb,\af)$\ do not degenerate from $\D 4{01}(\lb,\af,\bt)$, since the squares of $\T 4{41}(\af)$ and $\T 4{43}(\lb,\af)$ have dimension 3, while the square of $\D 4{01}(\lb,\af,\bt)$ has dimension $2<3$. This completes the proof of the theorem.

\end{proof}

%\section{The geometric classification of nilpotent terminal algebras}

{\tiny

\begin{longtable}{|lcl|ll|}
\multicolumn{5}{c}{  {\bf Table.} {\it  Degenerations of $4$-dimensional nilpotent terminal algebras.}}\\ 

\hline
$\T {4}{03}\left(\frac{1}{1 - t}\right)$ &$\to$&  $\mathfrak{N}_3(\alpha)$&
$E^t_1= \frac{2 \alpha t}{\sqrt{t-1}} e_1 -  \alpha t \sqrt{t-1} e_2 + \alpha t \sqrt{t-1} e_3$ &
$E^t_3= t e_3 + \frac{t}{4}  \left(1 + \frac{1 - t}{\alpha^2} - t\right) e_4$\\

&&&
$E^t_2= -t \sqrt{t-1} e_2 + t \sqrt{t-1}  e_3$ &
$E^t_4= t^2 e_4$ \\

\hline
$\T {4}{38}\left(t^{-1}\right)$ &$\to$& $\mathfrak{L}_2$&

$E^t_1= e_1$ &
$E^t_3= e_3$  \\
&&& 
$E^t_2= e_2$&
$E^t_4= t^{-1}  e_4$\\

\hline
$\D {4}{01}\left( 2t^{-1},  t^{-1}, t \right)$ &$\to$& $\mathfrak{L}_5$&

$E^t_1= 2 e_1 - 2 e_2 - 2 e_3$ &
$E^t_3= 4 t e_3 - 4t^{-1} e_4$\\
&&& 
$E^t_2= -2 t e_2 - 2 e_3$ &
$E^t_4= 8 e_4$\\

\hline
$\D {4}{01}\left( -\frac 12, 1, \frac{1 + t^2}{4}\right)$ &$\to$& $\mathfrak{L}_{11}$&
$E^t_1= 4t^{-1} e_1 - 2t^{-1} e_2$ &
$E^t_3= -8t^{-1} e_3 + 32t^{-3}  e_4$\\
&&&
$E^t_2= 4  e_2 - 8t^{-2} e_3$ &
$E^t_4= 32t^{-2} e_4$\\

\hline

\red $\T {4}{43} \left(0,0\right)$   &\red $\to$& \red $\T {4}{01}$ & 

\red $E^t_1= te_1$ &
\red $E^t_3= e_3$  \\
&&& 
\red $E^t_2= t^2e_2$&
\red $E^t_4= e_4$\\
\hline

\red$\T {4}{43} \left(0, 0\right)$   &\red$\to$& \red$\T {4}{02}$ & 

\red $E^t_1= te_1 + \frac 12 t^{-2}e_2$ &
\red $E^t_3= t^3e_3 + \frac 12 e_4$ \\ 
&&& 
\red $E^t_2= t^2e_2 + \frac 12 t^{-1}e_3 + \frac 14 t^{-4}e_4$ &
\red $E^t_4= e_4$\\
\hline

$\D {4}{01} \left(0, t + \af^{-1},t\right)$   &$\to$& $\T {4}{03} (\af)$ & 
$E^t_1= \af^2 t e_2$ &
$E^t_3= \af^3 t^2 (e_1 - e_2)$\\
&&&
$E^t_2= \af^4 t^2 e_3$ &
$E^t_4= \af^6 t^4 e_4$
 \\
\hline

$\T {4}{03} \left(\af \right)$   &$\to$& $\T {4}{04} (\af)$ & 
$E^t_1= t^{-1} e_1$ &
$E^t_3= t^{-1} e_3$\\
&&&
$E^t_2= t^{-2} e_2$ &
$E^t_4= t^{-3} e_4$
 \\
\hline

$\T {4}{07} \left(\frac{1}{t-1} \right)$   &$\to$& $\T {4}{05}$  & 

$E^t_1= \frac{(t-1)^2}{1 - 3 t + t^2} e_1  + \frac{(t-1)^3}{(1 - 3 t + t^2)^2} e_2+  t e_3$ &

$E^t_2= \frac{(t-1)^4}{ (1 - 3 t + t^2)^2} e_2 + \frac{(t-1)^4 t}{ (1 - 3 t + t^2)^2} e_4$ \\
&&&
$E^t_3=   - \frac{(t-1)^4}{ t (1 - 3 t + t^2)^2} e_2+e_3$ &
$E^t_4= -\frac{(t-1)^3}{ (1 - 3 t + t^2)^2} e_4$

 \\
\hline

$\T {4}{03} \left(t \right)$   &$\to$& $\T {4}{06}$  & 

$E^t_1= \frac{t - 1}{1 + t^2} e_1 + \frac{t^2 - 1}{(1 + t^2)^2} e_2 + \frac{1 - t}{1 + t^2} e_3$ &
$E^t_3= \frac{(1 - t) t}{1 + t^2} e_3$ \\
&&& 
$E^t_2= \frac{(t - 1)^2}{(1 + t^2)^2} e_2 +  \frac{(1 - t)^3}{(1 + t^2)^3} e_4$ &
$E^t_4= \frac{(1 - t)^3 t}{(1 + t^2)^3} e_4$\\
\hline

$\T {4}{03} \left( \alpha \right)$   &$\to$& $\T {4}{07} (\alpha)$  & 
$E^t_1= -\frac{1}{ 1 +\alpha} e_1 - \frac{1}{ (1 +\alpha)^2} e_2 + \frac{1}{ 1 +\alpha} e_3$ &
$E^t_2= \frac{1}{(1 +\alpha)^2} e_2 + \frac{1}{ (1 +\alpha)^3} e_4$\\
&&&
$E^t_3=   t^{-1} (e_2-(1 +\alpha) e_3)$ &
$E^t_4= -\frac{1}{ t(1 +\alpha)} e_4$ \\

\hline

$\T {4}{03} \left(-1 \right)$   &$\to$& $\T {4}{08}$  & 

$E^t_1= t^{-1} e_1$ & $E^t_3= t^{-2} e_3$ \\
&&&
$E^t_2= t^{-2} e_2$ &
$E^t_4= t^{-4} e_4$\\

\hline

$\T {4}{03} \left(t+t^4 \right)$   &$\to$& $\T {4}{09}$  & 
$E^t_1= \frac{t-1}{t^2 + t^5} e_1 + \frac{t-1}{t^3 (1 + t^3)^2}e_2 + \frac{1 - t}{t^2 + t^5} e_3$ &
$E^t_3= -\frac{(t-1 ) (-1 + t - t^2 - t^3 + 2 t^4 + t^7)}{ t (1 + t^3)^2 (-1 + t + t^4)} e_3$ \\
&&& 
$E^t_2= \frac{(t-1)^2}{t^4 (1 + t^3)^2} e_2 + \frac{t-1}{(1 + t^3)^2} e_3$& 
$E^t_4= -\frac{(t-1)^2 (-1 + t - t^2 - t^3 + 2 t^4 + t^7)}{ t^5 (1 + t^3)^4} e_4$\\ 
\hline
  
$\T {4}{03} (\af )$   &$\to$& $\T {4}{10}(\af )$  & 

$E^t_1= t^{-2} e_1$ & $E^t_3= t^{-3} e_3$\\
&&&
$E^t_2= t^{-4} e_2$  & $E^t_4= t^{-7} e_4$\\
\hline
  
$\T {4}{03} (1+t^{-2})$   &$\to$& $\T {4}{11}$   & 
$E^t_1= t^3 e_1$ & $E^t_3= t^5 e_3$ \\

&&&
$E^t_2= t^6 e_2 - t^6 e_3$ &
$E^t_4= t^9 e_4$\\
\hline

$\T {4}{41} (t^{-1})$    &$\to$& $\T {4}{12}$   & 

$E^t_1= 3 e_1 + 3 e_2$ &
$E^t_3= 3t^{-1} e_3 + 9t^{-1} e_4$ \\
&&& 
$E^t_2= 9 e_2 + 9 e_3 + 9t^{-1} e_4$ &
$E^t_4= 27t^{-1} e_4$\\
\hline

$\T {4}{12}$    &$\to$& $\T {4}{13}$   & 

$E^t_1=  e_1$ & $E^t_3= t^{-1}  e_3$ \\
&&& $E^t_2=  e_2$  & $E^t_4= t^{-1}  e_4$\\
\hline

$\T {4}{03} (1/t )$   &$\to$& $\T {4}{14}$   & 

$E^t_1= t^{-1}   e_1$ & $E^t_3= t^{-2}  e_3$ \\
&&& $E^t_2= t^{-2}  e_2$  & $E^t_4= t^{-5}  e_4$\\
\hline

$\T {4}{03} (\frac{1 - t}{\af} )$   &$\to$& $\T {4}{15}(\af)$   & 

$E^t_1= e_1 + \frac{\af}{ t ( \af -1+ t)} e_2 -  \frac{\af}{t (  \af-1 + t)} e_3$ &
$E^t_3= \frac{1 - t}{\af-1 + t} e_3 + \frac{\af (\af t -1+ t^2)}{  t^3 ( \af-1 + t)^2} e_4$ \\
&&& 
$E^t_2= e_2 + \frac{(t-1) t}{\af-1 + t} e_3$ &
$E^t_4= \frac{-1 + t}{t ( \af-1 + t)} e_4$\\
\hline

$\T {4}{03} (1 + t )$   &$\to$& $\T {4}{16}$   &

$E^t_1= \frac{t}{(1 + t) X^2} e_1 + \frac{1 - t}{(1 + t) X^3} e_2 +  \frac{1}{(1 + t) X^2}e_3$ &
$E^t_3= \frac{t}{(1 + t) X^3} e_3$ \\
\multicolumn{3}{|l|}{$X=-1 + t + t^2$}&
$E^t_2= \frac{t^2}{(1 + t)^2 X^4} e_2 + \frac{ 1 + t - t^2}{(1 + t)^2 X^5} e_4$ &
$E^t_4= \frac{t^2}{(1 + t)^2 X^6} e_4$\\
\hline

$\T {4}{03} (\frac{1}{1 - t^2} )$   &$\to$& $\T {4}{17}(\af)$   & 

$E^t_1= t^2 X e_1 + t^2(-1 + t^2) X^2 e_2 + (t^2 - t^4) X^2 e_3$ &
$E^t_3=  t^3 X^2 e_3$ \\

\multicolumn{3}{|l|}{$X=(-1 + \af + t^2)^{-1}$}&
$E^t_2= t^4  X^2 e_2 + t^4 ( t^2-1)(\af + t^2) X^4 e_4$ &
$E^t_4=  t^6 X^4 e_4$\\
\hline

$\T 4{19}$ &$\to$& $\T 4{18}$  & 

$E_1^t = \sqrt{1-t^2}e_1 + te_2 - te_3$ &  $E_3^t = e_3$\\
&&& $E_2^t = e_2$ & $E_4^t = \sqrt{1-t^2}e_4$\\
\hline

$\T 4{23}(t^{-1},-t-1)$ &$\to$& $\T 4{19}$  & 

$E_1^t = t^2e_1 - \frac{t^3}{t + 1}e_3$ & $E_3^t = t^4e_3$ \\
&&& $E_2^t = t^2e_2$ & $E_4^t = t^5e_4$\\
\hline

$\T 4{23}(it^{-1},t^{-1})$ &$\to$& $\T 4{20}$  & 

$E_1^t = te_1$ & $E_3^t = t^2e_3$ \\
&&& $E_2^t = te_2$ & $E_4^t = t^2e_4$\\
\hline

$\T 4{20}$ &$\to$& $\T 4{21}$  & 

$E_1^t = t^{-1}e_1$ &  $E_3^t = t^{-2}e_3$\\
&&& $E_2^t = t^{-1}e_2$ & $E_4^t = t^{-3}e_4$\\
\hline

$\T 4{23}(\af,\bt)$ &$\to$& $\T 4{22}(\af,\bt)$  & 

$E_1^t = t^{-1}e_1$ & $E_3^t = t^{-2}e_3$ \\
&&& $E_2^t = t^{-1}e_2$ & $E_4^t = t^{-3}e_4$\\
\hline

$\D 4{01}\left(\bt^2t^{-2},t+\af,t\right)$ &$\to$& $\T 4{23}(\af,\bt)$  & 

$E_1^t = t^2\bt^{-2}(e_1 - e_2)$ & $E_3^t = t^2\bt^{-2}e_3$ \\
&&& $E_2^t = t\bt^{-1}e_2$ & $E_4^t = t^4\bt^{-4}e_4$\\
\hline

$\T 4{23}(\af,-(\af + 1)t)$ &$\to$& $\T 4{24}(\af)$  & 

$E_1^t = te_1 - t(\af + 1)^{-1}e_3$ & $E_3^t = t^2e_3$ \\
&&& $E_2^t = te_2$ & $E_4^t = t^3e_4$\\
\hline

$\T 4{23}(-1,t)$ &$\to$& $\T 4{25}(\af)$  & 

$E_1^t = e_1 + \af t^{-1}e_3$ & $E_3^t = e_3$ \\
&&& $E_2^t = e_2$ & $E_4^t = e_4$\\
\hline

$\T 4{23}(t,i)$ &$\to$& $\T 4{26}(\af)$  & 

$E_1^t = \frac i{\af + i}e_1 -\frac{\af i}{(\af + i)^2t}e_3$ & $E_3^t = -\frac 1{(\af + i)^2}e_3 + \frac{\af}{(\af + i)^3t}e_4$ \\
&&& $E_2^t = \frac i{\af + i}e_2 - \frac{\af}{(\af + i)^2t}e_3$ & $E_4^t = -\frac i{(\af + i)^3}e_4$\\
\hline

$\T 4{23}\left(\frac{(\af^2 + 1)t}{\af - t},\af\right)$ &$\to$& $\T 4{27}(\af)$  & 

$E_1^t =  te_1 +\frac{(t-\af)t}{\af(\af^2 + 1)}e_3$ & $E_3^t = t^2e_3 - \frac{\af(t-\af)t^2}{\af^2 + 1}e_4$ \\
&&& $E_2^t = te_2 - \frac{(t-\af)t}{\af^2 + 1}e_3$ & $E_4^t = t^3e_4$\\
\hline

$\T 4{23}\left(\frac{i\af(1-t)}t,\frac{\af\sqrt{1-t}}t\right)$ &$\to$& $\T 4{28}(\af)$  & 

$E_1^t =  \frac{\af^2(t - 1)}t\left(-i e_1 + \frac 1{\sqrt{1-t}} e_2 - (\af - i) e_3\right)$ & $E_3^t = -\frac{\af^4(t - 1)}t e_3$ \\
&&& $E_2^t = \frac{\af^2(t - 1)}t(-e_1 + i\sqrt{1-t} e_2 +  e_3)$ & $E_4^t = \frac{\af^6(t - 1)^2}{t^2}e_4$\\
\hline

$\T 4{28}(\af)$ &$\to$& $\T 4{29}(\af)$  & 

$E_1^t = t^{-1}e_1$ & $E_3^t = t^{-2}e_3$ \\
&&& $E_2^t = t^{-1}e_2$ & $E_4^t = t^{-3}e_4$\\
\hline

$\T 4{23}\left(1,(1 - \af)\sqrt{t - 1}\right)$ &$\to$& $\T 4{30}(\af)$  & 
 \multicolumn{2}{l|}{$E_1^t = \frac{(\af - 1)t(t - 1)}D\left(-e_1+\frac 1{\sqrt{t - 1}} e_2-\frac{(\af - 2)t - \af  + 1}D e_3\right)$}\\
 \multicolumn{3}{|l|}{$D=\af^2t^2 - 2\af^2t - 2\af t^2 + \af^2 + 2\af t + t^2 + 2t - 1$}
& \multicolumn{2}{l|}{$E_2^t = \frac{i(\af - 1)t(t - 1)}D\left(e_1+\sqrt{t - 1}e_2-\frac{(\af-1)t^2 - (2\af-3)t + \af - 1}D e_3\right)$}\\
&&& \multicolumn{2}{l|}{$E_3^t = \frac{(\af - 1)^2(t - 1)t^3}{D^2}\left(e_3+\frac{((\af-1) t - \af)(\af - 1)(t - 1)}De_4\right)$} \\
&&& \multicolumn{2}{l|}{$E_4^t = \frac{i(\af - 1)^3(t - 1)^2t^4}{D^3}e_4$}\\
\hline

$\T 4{30}(\af)$ &$\to$& $\T 4{31}(\af)$ & 

$E_1^t = -\frac{i(\af^2 - 1)}{\af t}e_1 - \frac i{\af t}e_3$ & $E_3^t = -\frac{(\af^2 - 1)^2}{\af^2t^2}e_3 - \frac{i(\af + 1)^2(\af - 1)}{\af t^2}e_4$\\
&&& $E_2^t = -\frac{i(\af^2 - 1)}{\af t}e_2 + \frac 1t e_3$ & $E_4^t = i\frac{(\af^2 - 1)^3}{\af^3 t^3}e_4$\\
\hline

$\T 4{23}(\sqrt{t-1} + 1,1)$ &$\to$& $\T 4{32}$  & 

$E_1^t = \frac 1{(\sqrt{t-1} + 1)^2}\left(e_1 - \sqrt{t-1}e_2 -\frac{\sqrt{t-1}}{(\sqrt{t-1} + 1)^2}e_3\right)$ & $E_3^t = \frac t{(\sqrt{t-1} + 1)^4}e_3$ \\
&&& $E_2^t = \frac i{(\sqrt{t-1} + 1)^2}\left(-e_1 - \frac 1{\sqrt{t-1}}e_2 + \frac {\sqrt{t - 1}}{(\sqrt{t - 1} + 1)^2}e_3\right)$ & $E_4^t = -\frac {it}{(\sqrt{t - 1} + 1)^6}e_4$\\

\hline
$\T 4{23}(\sqrt{t-1} - 1,1)$ &$\to$& $\T 4{33}$  & 

$E_1^t = \frac 1t\left(e_1 - \sqrt{t - 1}e_2 -\frac{\sqrt{t - 1}}t e_3\right)$ & $E_3^t = \frac 1t e_3$ \\
&&& $E_2^t = \frac it\left(-e_1 -\frac 1{\sqrt{t - 1}}e_2 + \frac{\sqrt{t - 1}}t e_3\right)$ & $E_4^t = -\frac i{t^2}e_4$\\
\hline

$\T 4{23}(\sqrt{t-1} - 1,1)$ &$\to$& $\T 4{34}$  & 

\multicolumn{2}{l|}{$E_1^t = \frac{\sqrt{t-1} - 1}{t\sqrt{t-1} - t + 2}\left(e_1 -\sqrt{t-1}e_2 -\frac{\sqrt{t-1}(t\sqrt{t-1} - t + 4)}{t(t\sqrt{t-1} - t + 2)}e_3\right)$}\\
&&& 
\multicolumn{2}{l|}{$E_2^t = \frac{i(\sqrt{t-1}-1)}{t\sqrt{t-1} - t + 2}\left(-e_1 -\frac 1{\sqrt{t-1}}e_2 +\frac{t^2 - t\sqrt{t-1} - 3t + 4\sqrt{t-1}}{t(t\sqrt{t-1} - t + 2)}e_3\right)$}\\
&&& 
\multicolumn{2}{l|}{$E_3^t = \frac{(\sqrt{t-1} - 1)^2}{(t\sqrt{t-1} - t + 2)^2}\left(te_3 +\frac{2\sqrt{t-1}(t - \sqrt{t-1})}{t\sqrt{t-1} - t + 2}e_4\right)$}\\
&&& 
\multicolumn{2}{l|}{$E_4^t = -\frac{it(\sqrt{t-1} - 1)^3}{(t\sqrt{t-1} - t + 2)^3}e_4$}\\
\hline

$\D 4{01}\left(\frac 14,\frac 1{2t},\frac 1t\right)$ &$\to$& $\T 4{35}$  & 

$E_1^t = -t^{-1}(2e_1 - e_2 - 2e_3)$ & $E_3^t = -2t^{-1}e_3$ \\
&&& $E_2^t = t^{-1}(2e_1 - (2t + 1)e_2 - 4e_3)$ & $E_4^t = 4t^{-2}e_4$\\
\hline

$\T 4{35}$ &$\to$& $\T 4{36}$  & 

$E_1^t = t^{-1}e_1$ & $E_3^t = t^{-2}e_3$ \\
&&& $E_2^t = t^{-1}e_2$ & $E_4^t = t^{-3}e_4$\\
\hline

$\T 4{38}\left(t^{-1}\right)$ &$\to$& $\T 4{37}$  & 

$E_1^t = e_1 - \frac 1{2t}e_2$ & $E_3^t = e_3 - \frac 1{2t}(1 + \frac 1t)e_4$ \\
&&& $E_2^t = e_2 - \frac 1{2t}e_3 + \frac 1{4t^2}e_4$ & $E_4^t = \frac 1te_4$\\
\hline

\red $\T 4{43}\left((1-\af)(t + 1)t^{-1}, t(\af - 1)^{-1}\right)$ &\red $\to$& \red $\T 4{38}(\af)$  & 

\red $E^t_1= e_1$ & 
\red $E^t_3= (1-\af)t^{-1}e_3$ \\
&&& 
\red $E^t_2= e_2$ &
\red $E^t_4= e_4$\\
\hline

\red $\T 4{43}\left(\lb+t, -t^{-1}\right)$ &\red $\to$& \red $\T 4{39}(\lb)$  & 

\multicolumn{2}{l|}{\red $E_1^t = e_1 - De_2$}\\
 \multicolumn{3}{|l|}{\red $D=(2t^2 + (4\lb +5)t + 2\lb^2 + 3\lb + 1)^{-1}$}
& \multicolumn{2}{l|}{\red $E_2^t = e_2 - (t + \lb + 1)De_3 + D^2 e_4$}\\
&&& \multicolumn{2}{l|}{\red $E_3^t = e_3 + t^{-1}(2t + 3\lb + 3)De_4$} \\
&&& \multicolumn{2}{l|}{\red $E_4^t = -t^{-1}e_4$}\\
\hline
\red
$\T 4{43}\left(\lb  + t, \frac 2{(\lb + 1)(2\lb + 1)}\right)$ &\red$\to$& \red$\T 4{40}(\lb)$ & 

 \multicolumn{2}{l|}{\red $E_1^t = e_1 - \frac 12(\lb + 1)(2\lb + 1)D e_2$}\\
 \multicolumn{3}{|l|}{\red $D=(t(2t + 4\lb + 3))^{-1}$}
& \multicolumn{2}{l|}{\red $E_2^t = e_2 - \frac 12(\lb + 1)(2\lb + 1)(t + \lb + 1)De_3 + \frac 14(\lb + 1)^2(2\lb + 1)^2D^2e_4$}\\
&&& \multicolumn{2}{l|}{\red $E_3^t = e_3 - \frac 12(6t + 2\lb^2 + 9\lb + 7)De_4$} \\
&&& \multicolumn{2}{l|}{\red $E_4^t =  \frac 1{(\lb + 1)(2\lb + 1)}e_4$}\\
\hline

$\T 4{41}(t^{-2})$ &$\to$& $\T 4{42}$  & 

$E_1^t = t^{-1}e_1 + t^{-1}e_2$ &  $E_3^t = t^{-3}e_3 + 3t^{-3}e_4$\\
&&& $E_2^t = t^{-2}e_2 + t^{-2}e_3 + t^{-4}e_4$ & $E_4^t = t^{-5}e_4$\\
\hline

$\T 4{41}(0)$ &$\to$& $\T 4{44}$  & 

$E_1^t = t^{-1}e_1 + t^{-1}e_2$ & $E_3^t = t^{-3}e_3 + 3t^{-3}e_4$ \\
&&& $E_2^t = t^{-2}e_2 + t^{-2}e_3$ & $E_4^t = t^{-5}e_4$\\
\hline

$\D 4{01}\left(\lambda, \alpha,\beta\right)$ &$\to$& $\D 4{02}\left(\lambda, \alpha,\beta\right)$ 
& $E_1^t = t^{-1}e_1$  & $E_3^t = t^{-2}e_3$\\
 &&& $E_2^t = t^{-1}e_2$ & $E_4^t = t^{-3}e_4$\\
\hline

$\D 4{01}\left( \frac{\lambda - t}{(t-1)^2}, t - \af  t ,\af  - \af  t \right)$ &$\to$& $\D 4{03}\left(\lambda, \alpha \right)$ 
& 

\multicolumn{2}{l|}{$E^t_1= \frac{(1 - \af ) \af  (-1 + t)^2 t}{1 + \af ^2 \lambda + t - \af  (1 + t)} e_1 + 
\frac{(-1 + \af ) \af  (-1 + t) t^2}{1 + \af ^2 \lambda + t - \af  (1 + t)} e_2 - 
\frac{(-1 + \af )^2 \af  (-1 + t)^2 t}{ (1 + \af ^2 \lambda + t - \af  (1 + t))^2} e_3$} \\

&&& \multicolumn{2}{l|}{$E^t_2= \frac{(-1 + \af ) \af  (-1 + t) t}{ 1 + \af ^2 \lambda + t - \af  (1 + t)}e_2 - 
\frac{(-1 + \af ) \af ^2 (-1 + t)^2 t}{ (1 + \af ^2 \lambda + t - \af  (1 + t))^2} e_3$ }\\

&&&
\multicolumn{2}{l|}{$E^t_3= \frac{(-1 + \af )^2 \af ^2 (-1 + t)^2 t^2}{(1 + \af ^2 \lambda + t - \af  (1 + t))^2} e_3 + 
\frac{ (-1 + \af )^2 \af ^4 (-1 + t)^4 t^2}{  (1 + \af ^2 \lambda + t - \af  (1 + t))^3} e_4$}\\

&&&\multicolumn{2}{l|}{$E^t_4= -\frac{ (-1 + \af )^3 \af ^3 (-1 + t)^4 t^3}{  (1 + \af ^2 \lambda + t - \af  (1 + t))^3} e_4$ }\\

\hline

$\D 4{01}\left(\lambda, 0,\alpha \right)$ &$\to$& $\D 4{04}\left(\lambda, \alpha\right)$

& $E_1^t = t^{-1}e_1$  & $E_3^t = t^{-2}e_3$\\
 &&& $E_2^t = t^{-1}e_2$ & $E_4^t = t^{-3}e_4$\\
 \hline

$\D 4{01}\left(\lambda, t , \frac{1 + \sqrt{1 - 4 \lambda}}{2 \lambda }\right)$ &$\to$& $\D 4{05}\left(\lambda, \alpha\right)$ 
&

$E^t_1= -\frac{1 + \sqrt{1 - 4 \lambda}}{D} e_1 + 
\frac{(1 + \sqrt{1 - 4 \lambda}) (D-2 \lambda \af )}{ t D^2} e_3$ &
  
$E^t_3= \frac{(1 + \sqrt{1 - 4 \lambda})^2}{  D^2} e_3 - \frac{(1 + \sqrt{1 - 4 \lambda})^4}{ 2 \lambda t D^3} e_4$ \\

\multicolumn{3}{|l|}{$D=1 + \sqrt{1 - 4 \lambda} + 2 \lambda (-1 + \af + \af t)$}&
$E^t_2= -\frac{1 + \sqrt{1 - 4 \lambda}}{  D} e_2 + \frac{(1 + \sqrt{1 - 4 \lambda})^2}{ t D^2} e_3$ &

$E^t_4= -\frac{(1 + \sqrt{1 - 4 \lambda})^3}{  D^3} e_4$
    \\
\hline

$\D 4{01}\left(\lambda, t , \frac{1 -\sqrt{1 - 4 \lambda}}{2 \lambda }\right)$ &$\to$& $\D 4{06}\left(\lambda, \alpha\right)$ 
&

$E^t_1= -\frac{1 - \sqrt{1 - 4 \lambda}}{D} e_1 + 
\frac{(1 - \sqrt{1 - 4 \lambda}) (D-2 \lambda \af )}{ t D^2} e_3$ &

$E^t_3= \frac{(1 - \sqrt{1 - 4 \lambda})^2}{  D^2} e_3 - \frac{(1 - \sqrt{1 - 4 \lambda})^4}{ 2 \lambda t D^3} e_4$ \\

\multicolumn{3}{|l|}{$D=1 - \sqrt{1 - 4 \lambda} + 2 \lambda (-1 + \af + \af t)$}&
$E^t_2= -\frac{1 - \sqrt{1 - 4 \lambda}}{  D} e_2 + \frac{(1 - \sqrt{1 - 4 \lambda})^2}{ t D^2} e_3$ &

$E^t_4= -\frac{(1 - \sqrt{1 - 4 \lambda})^3}{  D^3} e_4$   
    \\
\hline

$\D 4{01}\left(\lambda, t , \frac{1 + \sqrt{1 - 4 \lambda}}{2 \lambda }\right)$ &$\to$& $\D 4{07}\left(\lambda\right)$ 
&

\multicolumn{2}{l|}{$E^t_1=-\frac{D  - (1 + \sqrt{1 - 4 \lambda}) \lambda}{D \lambda (1 + t)} e_1 + 
\frac{2 (D -(1 + \sqrt{1 - 4 \lambda}) \lambda) (D  -2 (1 + \sqrt{1 - 4 \lambda})\lambda + 2 \lambda^2)}{D^3 \lambda (1 + t)^2} e_3$} \\
  
\multicolumn{3}{|l|}{$D=1 + \sqrt{1 - 4 \lambda} - 2 \lambda$}&
\multicolumn{2}{l|}{$E^t_2= -\frac{D  - (1 + \sqrt{1 - 4 \lambda}) \lambda}{D \lambda (1 +t)}e_2$}  \\

&&&
$E^t_3= \frac{(D  - (1 + \sqrt{1 - 4\lambda}) \lambda)^2}{D^2 \lambda^2 (1 + t)^2} e_3$ &
$E^t_4= \frac{(D - (1 + \sqrt{1 - 4 \lambda}) \lambda)^3}{ D^3 \lambda^3 (1 + t)^3} e_4$\\
\hline
 
$\D 4{01}\left(\lambda, t , \frac{1 - \sqrt{1 - 4 \lambda}}{2 \lambda }\right)$ &$\to$& $\D 4{08}\left(\lambda\right)$ 
&

\multicolumn{2}{l|}{$E^t_1=-\frac{D  - (1 - \sqrt{1 - 4\lambda}) \lambda}{D\lambda (1 + t)} e_1 + 
\frac{2 (D - (1 - \sqrt{1 - 4\lambda}) \lambda) (D  - 2 (1 - \sqrt{1 - 4\lambda}) \lambda + 2\lambda^2)}{D^3\lambda (1 + t)^2} e_3$} \\
  
\multicolumn{3}{|l|}{$D=1 - \sqrt{1 - 4\lambda} - 2\lambda$}&
\multicolumn{2}{l|}{$E^t_2= -\frac{D - (1- \sqrt{1 - 4\lambda})\lambda}{D\lambda (1 +t)}e_2$}  \\

&&&
$E^t_3= \frac{(D  - (1 - \sqrt{1 - 4\lambda})\lambda)^2}{D^2\lambda^2 (1 + t)^2} e_3$ &
$E^t_4=- \frac{(D - (1 - \sqrt{1 - 4\lambda})\lambda)^3}{ D^3\lambda^3 (1 + t)^3} e_4$\\
\hline
    
$\D 4{01}\left( \frac{\lambda  - t}{(t-1)^2}, \af  - t^2,  t - t^2 \right)$ &$\to$& $\D 4{09}\left(\lambda, \af \right)$ 
&

\multicolumn{2}{l|}{$E^t_1=(1-t)^2 t ( t^2-\af) D  e_1  + (1 -t) t^2 ( t^2-\af) D  e_2  + ( t-1 )^3
   t (\af  - t^2) D^2 e_3$  } \\
  
\multicolumn{3}{|l|}{$D=(-1 + \af  (-1 + t) + t - \lambda  t^2)^{-1}$}&
\multicolumn{2}{l|}{$E^t_2= (1 - t) t (-\af  + t^2) D e_2  - (-1 + t)^2 t^2 (-\af  + t^2) D^2 e_3$ }  \\

&&&
\multicolumn{2}{l|}{$E^t_3=(1 - t)^2 t^2 (\af  - t^2)^2 D^2 e_3 + (1 - t)^4 t^4 (\af  - t^2)^2 D^3 e_4$ } \\\

&&&
\multicolumn{2}{l|}{$E^t_4=( 1-t )^4 ( t^3-\af  t)^3 D^3 e_4$}\\
\hline

$\D 4{09}\left(\lambda, \alpha \right)$ &$\to$& $\D 4{10}\left(\lambda, \alpha \right)$ 
& $E_1^t = t^{-1}e_1$  & $E_3^t = t^{-2}e_3$\\
 &&& $E_2^t = t^{-1}e_2$ & $E_4^t = t^{-3}e_4$\\
\hline

$\D 4{01}\left( \frac{\lambda  - t}{(t-1)^2},  t^2-1,  t^2 - t\right)$ &$\to$& $\D 4{11}\left(\lambda, \af \right)$ 
&

\multicolumn{2}{l|}{$E^t_1=(1 - t)^3 (1 + t) D  e_1 + (1 - t)^2 t (1 + t) D e_2 - \af (1 - t)^3 (1 + t)^2 D^2t^{-1}  e_3$ } \\
  
\multicolumn{3}{|l|}{$D=(1 + \af \lambda t - t^2)^{-1}$}&
\multicolumn{2}{l|}{$E^t_2= (1 - t)^2 (1 + t) D  e_2 -  \af (1 - t)^3 (1 + t) D^2 e_3$  }  \\

&&&
\multicolumn{2}{l|}{$E^t_3= (1 - t)^4 (1 + t)^2 D^2 e_3 +  \af (1 - t)^6 t (1 + t)^2 D^3 e_4 $} \\\

&&&
\multicolumn{2}{l|}{$E^t_4=(1 - t)^7 (1 + t)^3 D^3 e_4$}\\
\hline

$\D 4{01}\left(\lb\left(1-\frac {\af t}{\Theta^2}\right), \frac 1\Theta\left(\af+\frac{\lb}{t}\right), \frac 1t\right)$ &$\to$& $\D 4{12}\left(\lb,\af\right)$
& $E_1^t = -\af\Theta t^{-1}e_1 + \af\lb t^{-1}e_2 + \af(\Theta - \af)t^{-1}e_3$  & $E_3^t = -\af^3 t^{-1}e_3$\\
 &&& $E_2^t = -\af t^{-1}e_1 + \af(\af t + \lb)(\Theta t)^{-1}e_2 + \af t^{-1}e_3$ & $E_4^t = \af^4 t^{-2}e_4$\\
\hline

$\D 4{01}\left(\lb\left(1-\frac {\af t}{\Psi^2}\right), \frac 1{\Psi}\left(\af+\frac{\lb}{t}\right), \frac 1t\right)$ &$\to$& $\D 4{13}\left(\lb,\af\right)$ 
& $E_1^t = -\af\Psi t^{-1}e_1 + \af\lb t^{-1}e_2 + \af(\Psi - \af)t^{-1}e_3$  & $E_3^t = -\af^3 t^{-1}e_3$\\
\multicolumn{3}{|l|}{$\Psi=1-\Theta$}&
 $E_2^t = -\af t^{-1}e_1 + \af(\af t + \lb)(\Psi t)^{-1}e_2 + \af t^{-1}e_3$ & $E_4^t = \af^4 t^{-2}e_4$\\
\hline

$\D 4{12}\left(\lb,\af\right)$ &$\to$& $\D 4{14}\left(\lb,\af\right)$ 
& $E_1^t = t^{-1}e_1$  & $E_3^t = t^{-2}e_3$\\
 &&& $E_2^t = t^{-1}e_2$ & $E_4^t = t^{-3}e_4$\\
\hline

$\D 4{13}\left(\lb,\af\right)$ &$\to$& $\D 4{15}\left(\lb,\af\right)$ 
& $E_1^t = t^{-1}e_1$  & $E_3^t = t^{-2}e_3$\\
 &&& $E_2^t = t^{-1}e_2$ & $E_4^t = t^{-3}e_4$\\
\hline

$\D 4{01}\left(t^2, \af , t^{-1}\right)$ &$\to$& $\D 4{16}\left(\af\right)$ 
&

$E^t_1= -\af D e_1 + \af(\af t - 1) D^2 e_3$ &
 
$E^t_3= -\af^3 t D^2 e_3 -\af^3 D^3 e_4$ \\

\multicolumn{3}{|l|}{$D=((\af^2 + \af + 1)t - \af  - 1)^{-1}$}&
$E^t_2= -\af D e_1 + \af^2t D e_2 + \af(\af t - 1) D^2 e_3$ &

$E^t_4= \af^4 t D^3 e_4$\\
\hline

$\D 4{01}\left(t^2, -1 , t^{-1}\right)$ &$\to$& $\D 4{17}\left(\af\right)$ 
&

$E^t_1= D e_1 + ((\af - 2) t + \af)D^2 e_3$ &
 
$E^t_3= tD^2 e_3 + \af D^3 e_4$ \\

\multicolumn{3}{|l|}{$D=((\af + 1)t)^{-1}$}&
$E^t_2= D e_1 + tD e_2 + ((\af - 1)t + \af )D^2 e_3$ &

$E^t_4= tD^3 e_4$\\
\hline

$\D 4{01}\left(\lb\left(1+\frac t{\Theta^2}\right), \af + \Psi, 1\right)$ &$\to$& $\D 4{18}\left(\lb,\af\right)$ 
&
\multicolumn{2}{l|}{$E^t_1= t\Theta^3D(\af + \Psi)\left( e_1 - \Psi e_2 - \Theta(\Theta^2\af + \af t + \lb) D e_3\right)$}\\

\multicolumn{3}{|l|}{$D=(\af t^2+\Theta\af(\af + \Theta) t+\Theta^2(\af  + 1)(\Theta\af + \Psi))^{-1}$}&

\multicolumn{2}{l|}{$E^t_2= t\Theta(\af + \Psi)D\left(\Theta e_1+(t-\lb)e_2 - (t^2+(\af  - \Psi)\Theta t+\Theta(\Theta\af + \Psi)\Theta^2) D e_3\right)$}\\

\multicolumn{3}{|l|}{$\Psi=1-\Theta$}&

\multicolumn{2}{l|}{$E^t_3= t^3\Theta^4(\af + \Psi)^2D^2\left(e_3 -\Theta(\Theta\af  + t) D e_4\right)$} \\

&&&

\multicolumn{2}{l|}{$E^t_4= t^4\Theta^6(\af + \Psi)^3D^3e_4$}\\
\hline

$\D 4{01}\left(\lb\left(1+\frac t{\Psi^2}\right), \af + \Theta, 1\right)$ &$\to$& $\D 4{19}\left(\lb,\af\right)$ 
&
\multicolumn{2}{l|}{$E^t_1= t\Psi^3D(\af + \Theta)\left( e_1 - \Theta e_2 - \Psi(\Psi^2\af + \af t + \lb) D e_3\right)$}\\

\multicolumn{3}{|l|}{$D=(\af t^2+\Psi\af(\af + \Psi) t+\Psi^2(\af  + 1)(\Psi\af + \Theta))^{-1}$}&

\multicolumn{2}{l|}{$E^t_2= t\Psi(\af + \Theta)D\left(\Psi e_1+(t-\lb)e_2 - (t^2+(\af - \Theta)\Psi t+\Psi(\Psi\af + \Theta)\Psi^2) D e_3\right)$}\\

\multicolumn{3}{|l|}{$\Psi=1-\Theta$}&

\multicolumn{2}{l|}{$E^t_3= t^3\Psi^4(\af + \Theta)^2D^2\left(e_3 -\Psi(\Psi\af  + t) D e_4\right)$} \\

&&&

\multicolumn{2}{l|}{$E^t_4= t^4\Psi^6(\af + \Theta)^3D^3e_4$}\\
\hline

$\D 4{18}\left(\lb,\af\right)$ &$\to$& $\D 4{20}\left(\lb,\af\right)$ 
& $E_1^t = t^{-1}e_1$  & $E_3^t = t^{-2}e_3$\\
 &&& $E_2^t = t^{-1}e_2$ & $E_4^t = t^{-3}e_4$\\
\hline

$\D 4{19}\left(\lb,\af\right)$ &$\to$& $\D 4{21}\left(\lb,\af\right)$ 
& $E_1^t = t^{-1}e_1$  & $E_3^t = t^{-2}e_3$\\
 &&& $E_2^t = t^{-1}e_2$ & $E_4^t = t^{-3}e_4$\\
\hline

$\D 4{01}\left(\lb\left(1+\frac t{\Theta^2}\right), -\frac{\lb^2}{\Theta^3}, 1\right)$ &$\to$& $\D 4{22}\left(\lb\right)$ 
&
\multicolumn{2}{l|}{$E^t_1= \Theta^2D(\Theta e_1 - \lb e_2 - \Theta\lb D e_3)$}\\

\multicolumn{3}{|l|}{$D=(\Psi(t-2\lb+3\Theta-1))^{-1}$}&

\multicolumn{2}{l|}{$E^t_2= \Theta D\left(\Theta e_1 + (t-\lb)e_2 - \Theta^3\lb^{-1}((2\Theta - 1)t - \Theta (2\lb + 1)  + 1)  D e_3\right)$}\\

\multicolumn{3}{|l|}{$\Psi=1-\Theta$}&

\multicolumn{2}{l|}{$E^t_3= \Theta^4D^2\left(t e_3 - \Theta^3\lb^{-1}(2\Theta - 1)(t - \Psi) D e_4\right)$} \\

&&&

\multicolumn{2}{l|}{$E^t_4= t\Theta^6 D^3 e_4$}
   
    \\
\hline

$\D 4{01}\left(\lb\left(1+\frac t{\Psi^2}\right), -\frac{\lb^2}{\Psi^3}, 1\right)$ &$\to$& $\D 4{23}\left(\lb\right)$ 
&
\multicolumn{2}{l|}{$E^t_1= \Psi^2D(\Psi e_1 - \lb e_2 - \Psi\lb D e_3)$}\\

\multicolumn{3}{|l|}{$D=(\Theta(t-2\lb+3\Psi-1))^{-1}$}&

\multicolumn{2}{l|}{$E^t_2= \Psi D\left(\Psi e_1 + (t-\lb)e_2 - \Psi^3\lb^{-1}((2\Psi - 1)t - \Psi (2\lb + 1)  + 1)  D e_3\right)$}\\

\multicolumn{3}{|l|}{$\Psi=1-\Theta$}&

\multicolumn{2}{l|}{$E^t_3= \Psi^4D^2\left(t e_3 - \Psi^3\lb^{-1}(2\Psi - 1)(t - \Theta) D e_4\right)$} \\

&&&

\multicolumn{2}{l|}{$E^t_4= t\Psi^6 \lb^{-1} D^3 e_4$}
   
    \\
 
\hline

$\D 4{01}\left(\lb\left(1-\frac t{\Theta^2}\right), -\frac{\lb^2}{\Theta^3}, 1\right)$ &$\to$& $\D 4{24}\left(\lb\right)$ 
&

$E^t_1=  -\Theta D\left(\Theta e_1 - \lb e_2 + \Theta^2 D e_3\right)$ &
 
$E^t_3= -t\Theta^2 D^2 e_3$ \\

\multicolumn{3}{|l|}{$D=(t - 2\Theta + 1)^{-1}$}&
$E^t_2= -D\left(\Theta e_1 - (t+\lb) e_2 + \Theta^2 D e_3\right)$ &

$E^t_4= t\Theta^3 D^3 e_4$ \\
\hline

$\D 4{01}\left(\lb\left(1-\frac t{\Psi^2}\right), -\frac{\lb^2}{\Psi^3}, 1\right)$ &$\to$& $\D 4{25}\left(\lb\right)$ 
&

$E^t_1=  -\Psi D\left(\Psi e_1 - \lb e_2 + \Psi^2 D e_3\right)$ &
  
$E^t_3= -t\Psi^2 D^2 e_3$ \\

\multicolumn{3}{|l|}{$D=(t - 2\Psi + 1)^{-1},\ \Psi=1-\Theta$}&
$E^t_2= -D\left(\Psi e_1 - (t+\lb) e_2 + \Psi^2 D e_3\right)$ &

$E^t_4= t\Psi^3\lb^{-1} D^3 e_4$\\
\hline

$\D 4{01}\left(\lb\left(1+\frac t{\Theta^2}\right), -\Theta, 1\right)$ &$\to$& $\D 4{26}\left(\lb\right)$ 
& $E_1^t = \Theta t^{-1}\left(\Theta e_1 - \lb e_2 - \Theta t^{-1} e_3\right)$  & $E_3^t = \Theta^2 t^{-1} e_3$\\
 &&& $E_2^t = t^{-1}\left(\Theta e_1 + (t-\lb) e_2 - \Theta^2 t^{-1} e_3\right)$ & $E_4^t = \Theta^3 t^{-2} e_4$\\
\hline

$\D 4{01}\left(\lb\left(1+\frac t{\Psi^2}\right), -\Psi, 1\right)$ &$\to$& $\D 4{27}\left(\lb\right)$
& $E_1^t = \Psi t^{-1}\left(\Psi e_1 - \lb e_2 - \Psi t^{-1} e_3\right)$  & $E_3^t = \Psi^2 t^{-1} e_3$\\
 &&& $E_2^t = t^{-1}\left(\Psi e_1 + (t-\lb) e_2 - \Psi^2 t^{-1} e_3\right)$ & $E_4^t = \Psi^3 t^{-2} e_4$\\
\hline

$\D 4{01}\left(\lb\left(1+\frac t{\Theta^2}\right), -\Theta, 1\right)$ &$\to$& $\D 4{28}\left(\lb\right)$ 
&

\multicolumn{2}{l|}{$E^t_1=  \Theta^3D\left(\Theta e_1 - \lb e_2 - t^{-1}\Theta^2(\Psi^2t + 4\Theta\lb - \Theta) D e_3\right)$}\\
  
\multicolumn{3}{|l|}{$D=(\Psi t-2\lb+\Theta)^{-1},\ \Psi=1-\Theta$}&

\multicolumn{2}{l|}{$E^t_2= \Theta^2D\left(\Theta e_1 + (t-\lb) e_2 + t^{-1}\Theta((1 - 2\Theta)t^2 + \Theta(3\lb-2\Psi)t -\Theta^2(4\lb-1)) D e_3\right)$}\\

&&&

\multicolumn{2}{l|}{$E^t_3=  \Theta^6 D^2\left(t e_3 + (2\lb-\Theta)(t - \Theta) D e_4\right)$} \\

&&&

\multicolumn{2}{l|}{$E^t_4= t\Theta^9 D^3 e_4$}\\
\hline

$\D 4{01}\left(\lb\left(1+\frac t{\Psi^2}\right), -\Psi, 1\right)$ &$\to$& $\D 4{29}\left(\lb\right)$ 
&

\multicolumn{2}{l|}{$E^t_1=  \Psi^3D\left(\Psi e_1 - \lb e_2 - t^{-1}\Psi^2(\Theta^2t + 4\Psi\lb - \Psi) D e_3\right)$}\\
  
\multicolumn{3}{|l|}{$D=(\Theta t-2\lb+\Psi)^{-1},\ \Psi=1-\Theta$}&

\multicolumn{2}{l|}{$E^t_2= \Psi^2D\left(\Psi e_1 + (t-\lb) e_2 + t^{-1}\Psi((1 - 2\Psi)t^2 + \Psi(3\lb-2\Theta)t -\Psi^2(4\lb-1)) D e_3\right)$}\\

&&&

\multicolumn{2}{l|}{$E^t_3=  \Psi^6 D^2\left(t e_3 + (2\lb-\Psi)(t - \Psi) D e_4\right)$} \\

&&&

\multicolumn{2}{l|}{$E^t_4= t\Psi^9 D^3 e_4$}\\
\hline

$\D 4{01}\left(\lb\left(1-\left(2+\frac{2\lb-1}{\Theta^2}\right)t\right), \frac\lb{\Theta^2}\left(1-\frac\Theta t\right), \frac 1t\right)$ &$\to$& $\D 4{30}\left(\lb\right)$ 
&

\multicolumn{2}{l|}{$E^t_1= -D\left(\Theta e_1 - \lb e_2 + t\Psi^{-1}D e_3\right)$}\\
  
\multicolumn{3}{|l|}{$D=\frac{\lb(t-\Theta)}{t\left((1-2\Theta)t+\Theta^2\right)},\ \Psi=1-\Theta$}&

\multicolumn{2}{l|}{$E^t_2= -D\left(e_1 - \Theta^{-1}((2\Theta-1)t + \lb) e_2 + t\Psi^{-1}(2t-\Theta)(t-\Theta)^{-1} D e_3\right)$}\\

&&&

\multicolumn{2}{l|}{$E^t_3= tD^2\left((1-2\Theta)e_3 + \Theta^2\lb^{-1}(t-\Theta)^{-1}D e_4\right)$} \\

&&&

\multicolumn{2}{l|}{$E^t_4= t D^3(2\Theta - 1) e_4$}

    \\
\hline

$\D 4{01}\left(\lb\left(1-\left(2+\frac{2\lb-1}{\Psi^2}\right)t\right), \frac\lb{\Psi^2}\left(1-\frac\Psi t\right), \frac 1t\right)$ &$\to$& $\D 4{31}\left(\lb\right)$ 
&

\multicolumn{2}{l|}{$E^t_1= -D\left(\Psi e_1 - \lb e_2 + t\Theta^{-1}D e_3\right)$}\\
  
\multicolumn{3}{|l|}{$D=\frac{\lb(t-\Psi)}{t\left((1-2\Psi)t+\Psi^2\right)},\ \Psi=1-\Theta$}&

\multicolumn{2}{l|}{$E^t_2= -D\left(e_1 - \Psi^{-1}((2\Psi-1)t + \lb) e_2 + t\Theta^{-1}(2t-\Psi)(t-\Psi)^{-1} D e_3\right)$}\\

&&&

\multicolumn{2}{l|}{$E^t_3= tD^2\left((1-2\Psi)e_3 + \Psi^2\lb^{-1}(t-\Psi)^{-1}D e_4\right)$} \\

&&&

\multicolumn{2}{l|}{$E^t_4= t D^3(2\Psi - 1) e_4$}

    \\
\hline

$\D 4{30}\left(\lb\right)$ &$\to$& $\D 4{32}\left(\lb\right)$ 
& $E_1^t = t^{-1}e_1$  & $E_3^t = t^{-2}e_3$\\
 &&& $E_2^t = t^{-1}e_2$ & $E_4^t = t^{-3}e_4$\\
\hline

$\D 4{31}\left(\lb\right)$ &$\to$& $\D 4{33}\left(\lb\right)$ 
& $E_1^t = t^{-1}e_1$  & $E_3^t = t^{-2}e_3$\\
 &&& $E_2^t = t^{-1}e_2$ & $E_4^t = t^{-3}e_4$\\
\hline

$\D 4{01}\left(\frac t{(t + 1)2}, 1 , 1 + \frac 1t\right)$ &$\to$& $\D 4{34}$
&

\multicolumn{2}{l|}{$E^t_1= -(t + 1)^2 D e_1 + (t + 1)t D e_2 + (t + 1)^2(t - 1)D^2 e_3$}\\
  
\multicolumn{3}{|l|}{$D=(t^2 + t - 1)^{-1}$}&

\multicolumn{2}{l|}{$E^t_2= -(t + 1)^2 D e_1 + (t + 1)^2t D e_2 + (t + 1)^2 D e_3$}\\

&&&
\multicolumn{2}{l|}{$E^t_3=  -(t + 1)^2t^2 D^2 e_3 - (t + 1)^4t D^3 e_4$} \\

&&&

\multicolumn{2}{l|}{$E^t_4= (t + 1)^4t^2 D^3 e_4$}
    \\
\hline

$\D 4{01}\left(\lb, t^{-1}, 1\right)$ &$\to$& $\D 4{35}\left(\lambda\right)$ 
& $E_1^t = t^{-1}e_1$  & $E_3^t = t^{-2}e_3 + (\lb t^2)^{-1}e_4$\\
 &&& $E_2^t = t^{-1}e_2 + (\lb t)^{-1}e_3$ & $E_4^t = t^{-4}e_4$\\
\hline

$\D 4{35}\left(\lambda\right)$ &$\to$& $\D 4{36}\left(\lambda\right)$ 
& $E_1^t = t^{-1}e_1$  & $E_3^t = t^{-2}e_3$\\
 &&& $E_2^t = t^{-1}e_2$ & $E_4^t = t^{-3}e_4$\\
\hline

$\D 4{01}\left(\lb,\Theta t^{-1},t^{-1}\right)$ &$\to$& $\D 4{37}\left(\lb\right)$ 
& $E_1^t = te_1$  & $E_3^t = t^2e_3$\\
 &&& $E_2^t = te_2$ & $E_4^t = t^2e_4$\\
\hline

$\D 4{01}\left(\lb,(1-\Theta) t^{-1},t^{-1}\right)$ &$\to$& $\D 4{38}\left(\lb\right)$ 
& $E_1^t = te_1$  & $E_3^t = t^2e_3$\\
 &&& $E_2^t = te_2$ & $E_4^t = t^2e_4$\\
\hline

$\D 4{37}\left(\lambda\right)$ &$\to$& $\D 4{39}\left(\lambda\right)$ 
& $E_1^t = t^{-1}e_1$  & $E_3^t = t^{-2}e_3$\\
 &&& $E_2^t = t^{-1}e_2$ & $E_4^t = t^{-3}e_4$\\
\hline

$\D 4{38}\left(\lambda\right)$ &$\to$& $\D 4{40}\left(\lambda\right)$ 
& $E_1^t = t^{-1}e_1$  & $E_3^t = t^{-2}e_3$\\
 &&& $E_2^t = t^{-1}e_2$ & $E_4^t = t^{-3}e_4$\\
\hline

\end{longtable}
}

\end{document}